    \newcommand{\BC}{{\mathbb {C}}} 
     \newcommand{\BF}{{\mathbb {F}}}
    \newcommand{\BQ}{{\mathbb {Q}}} \newcommand{\BR}{{\mathbb {R}}}
     \newcommand{\BT}{{\mathbb {T}}}
     \newcommand{\BZ}{{\mathbb {Z}}}
    \newcommand{\CE}{{\mathcal {E}}} 
     \newcommand{\CH}{{\mathcal {H}}}
    \newcommand{\CM}{{\mathcal {M}}} \newcommand{\CN}{{\mathcal {N}}}
     \newcommand{\CP}{{\mathcal {P}}}
     \newcommand{\CR}{{\mathcal {R}}}
     \newcommand{\CT}{{\mathcal {T}}}
    \newcommand{\fq}{{\mathfrak{q}}}
     \newcommand{\fH}{{\mathfrak{H}}}
     \newcommand{\fX}{{\mathfrak{X}}}
    \newcommand{\Div}{{\mathrm{Div}}}
    \newcommand{\Res}{{\mathrm{Res}}}
    \newcommand{\SL}{{\mathrm{SL}}}
    \newcommand{\sgn}{{\mathrm{sgn}}}
    \theoremstyle{plain}
    \newtheorem{thm}{Theorem}[section] \newtheorem{cor}[thm]{Corollary}
    \newtheorem{lem}[thm]{Lemma}  \newtheorem{prop}[thm]{Proposition}
    \newtheorem {conj}[thm]{Conjecture} \newtheorem{defn}[thm]{Definition}
\theoremstyle{remark} \newtheorem{remark}[thm]{Remark}
\theoremstyle{remark} 
\theoremstyle{remark} 
    \numberwithin{equation}{section}
\begin{document}

\title{On the torsion subgroups of the modular Jacobians}
\author{Yuan Ren}

\address{Academy of Mathematics and Systems Science, Chinese Academy of Sciences, Beijing, China}
\email{ry198628@163.com}

\begin{abstract}For any positive integer $N$, we prove that the rational torsion subgroup of $J_0(N)$ agrees with its rational cuspidal subgroups up to a factor of $6N\prod_{p\mid N}(p^2-1)$. Moreover, for modular Jacobians of the form $J_0(DC)$ with $D$ a positive square-free integer and $C$ any positive divisor of $D$, we prove that the $\psi$-part of the torsion subgroup of $J_0(DC)$ agrees with the $\psi$-part of its cuspidal subgroup up to a factor of $6D\prod_{p\mid D}(p^2-1)$, where $\psi$ is any quadratic character of conductor dividing $C$.
\end{abstract}

\maketitle

\tableofcontents
\section{Introduction}
For any positive integer $N$, let $X_0(N)$ be the canonical model over $\BQ$ of the modular curve of level $\Gamma_0(N)$ and let $J_0(N)$ be the Jacobian variety of $X_0(N)$ over $\BQ$. When $N=p$ is a prime, Ogg proved
\[C_0(p)\simeq\BZ/\frac{p-1}{(p-1,12)}\BZ,\]
where $C_0(p)$ is the cuspidal subgroup of $J_0(p)$ generated by the class of the divisor $[0]-[\infty]$ with $[0]$ and $[\infty]$ the two cusps of $X_0(p)$, and conjectured that
\[J_0(p)(\BQ)_{tor}=C_0(p)\]
(see \cite{Ogg1} and \cite{Ogg2}). Here $[0]-[\infty]$ is a $\BQ$-rational point of $J_0(p)$ since both $[0]$ and $[\infty]$ are $\BQ$-rational points in $X_0(p)$. In fact, for any positive integer $N$, the set of cusps of $X_0(N)$ is stable under the action of $G_\BQ$, and each positive divisor $d\mid N$ corresponds to a unique $G_\BQ$-orbit consisting of those cusps defined precisely over $\BQ(\mu_{(d,N/d)})$ (see \S1.3 of \cite{St}).

The above conjecture of Ogg has been proved by Mazur in his celebrated work \cite{M}, where the unique normalized weight-two Eisenstein series $E$ of level $\Gamma_0(p)$ plays a fundamental role. In fact, $C_0(p)$ is exactly the cuspidal subgroup associated to $E$ (see Definition~\ref{def}). Moreover, let $\BT_0(p)$ be the full Hecke algebra of level $\Gamma_0(p)$ generated over $\BZ$ by the Hecke operators $T_\ell$ for all the primes $\ell$. Then the action of $\BT_0(p)$ on $J_0(p)$ preserves $C_0(p)$ and induces an isomorphism
\[\BT_0(p)/I_{\Gamma_0(p)}(E)\simeq C_0(p),\]
where $I_{\Gamma_0(p)}(E)$ is the Eisenstein ideal of $E$ (see also Definition~\ref{def}). This isomorphism, which gives us the structure of $C_0(p)$ as a $\BT_0(p)$-module, is one of the key ingredient in the proof of Ogg's conjecture by Mazur. Here, we should remark that Mazur actually defined $\BT_0(p)$ to be the $\BZ$-algebra generated by all $T_\ell$'s with $\ell\neq p$ and the Atkin-Lehner operator $w_p$. But since $w_p=-T_p$ in this situation, these two definitions are in fact the same. After their pioneering work, one is naturally led to the following

\begin{conj}(Generalized Ogg's conjecture) For any positive integer $N$, we have that
\[J_0(N)(\BQ)_{tor}=C_0(N)(\BQ)\]
where $C_0(N)$ is the subgroup of $J_0(N)(\overline{\BQ})$ generated by degree zero divisor classes supported at the cusps of $X_0(N)$, and $C_0(N)(\BQ)=C_0(N)(\BQ)^{G_{\BQ}}$ is the $\BQ$-rational subgroup of $C_0(N)$.
\end{conj}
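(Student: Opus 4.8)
The plan is to prove the two inclusions separately; the inclusion $C_0(N)(\BQ)\subseteq J_0(N)(\BQ)_{tor}$ is essentially formal, since by the Manin--Drinfeld theorem every degree-zero divisor class supported on the cusps is torsion, and a Galois-fixed such class is a $\BQ$-rational torsion point. All the difficulty lies in the reverse inclusion $J_0(N)(\BQ)_{tor}\subseteq C_0(N)(\BQ)$, and it is this that I describe below.

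First I would reduce to one prime at a time: both groups are finite abelian, so it suffices to prove that their $\ell$-primary parts agree for every prime $\ell$, and after fixing $\ell$ one works inside the $\ell$-power torsion $J_0(N)[\ell^\infty]$ with its action of the full Hecke algebra $\BT=\BT_0(N)$ generated over $\BZ$ by all Hecke operators $T_n$. The crucial structural point, generalizing Mazur's, is that rational torsion is \emph{Eisenstein}: because $G_\BQ$ acts trivially on $J_0(N)(\BQ)[\ell]$, any maximal ideal $\fm\subset\BT$ in the support of this group has reducible residual Galois representation $\bar\rho_\fm$, hence is an Eisenstein prime attached to a weight-two Eisenstein series of level $\Gamma_0(N)$. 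This localizes the entire problem at the finitely many Eisenstein maximal ideals containing $\ell$ and reduces the conjecture to a statement about each completion $\BT_\fm$.

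Next, for each such $\fm$ I would compare the completed torsion module $J_0(N)(\BQ)_{tor}\otimes_\BT\BT_\fm$ with the corresponding completion of $C_0(N)(\BQ)$. The model is Mazur's isomorphism $\BT_0(p)/I_{\Gamma_0(p)}(E)\simeq C_0(p)$ from the prime-level case: one wants, for each relevant Eisenstein series, an identification of its cuspidal subgroup with $\BT/I_\fm$ together with a proof that $\BT_\fm$ is Gorenstein, or at least satisfies the pertinent multiplicity-one property, so that no $\fm$-adic rational torsion can exceed the image of the cuspidal part. Carrying this out requires three ingredients in tandem: an explicit determination of the $G_\BQ$-rational cuspidal subgroup through modular units in the style of Ligozat and Kubert--Lang; control of the component groups of the \Neron model of $J_0(N)$ at the primes $p\mid N$, where extra torsion could a priori arise; and an analysis of the local conditions at $\ell$ and at each $p\mid N$ that pins down $\bar\rho_\fm$ and excludes any non-cuspidal rational torsion class.

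The hard part, and the reason the conjecture is still open in the stated generality, is precisely the control of $\BT_\fm$ at the Eisenstein primes $\ell$ dividing $6N\prod_{p\mid N}(p^2-1)$. At these primes the Gorenstein and multiplicity-one properties underpinning the prime-level argument can genuinely fail: $\bar\rho_\fm$ may be wildly ramified at $\ell$ or at the $p\mid N$, distinct Eisenstein series may be congruent modulo $\fm$, and the cuspidal subgroup produced by modular units need not exhaust the rational torsion for reasons tied to the component groups. The factor $6N\prod_{p\mid N}(p^2-1)$ in the approximation theorem of this paper measures exactly this defect; closing the gap would demand a genuinely new input at the small and ramified primes---for example a fine study of the Eisenstein ideal in the non-Gorenstein regime, or a direct geometric argument ruling out the hypothetical extra torsion---rather than a sharpening of the present method.
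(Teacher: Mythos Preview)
The statement you are asked to prove is recorded in the paper as a \emph{conjecture}, not a theorem; the paper does not prove it, and indeed states explicitly that only the partial results Theorem~\ref{M1} and Theorem~\ref{M2} are established, leaving the primes dividing $6N\varpi(N)$ untouched. Your proposal is therefore not a proof either, and to your credit you say so yourself in the final paragraph: you correctly isolate the easy inclusion via Manin--Drinfeld, correctly reduce the hard inclusion to an Eisenstein-ideal problem one prime at a time, and correctly diagnose the obstruction at the primes dividing $6N\prod_{p\mid N}(p^2-1)$ as a failure of Gorensteinness/multiplicity one. So there is no disagreement about the bottom line---the conjecture remains open---but your write-up should be labelled as a strategy sketch, not a proof.

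It is still worth noting that the route you outline for the \emph{partial} result differs from the paper's. You propose the Mazur template: localize at each Eisenstein maximal ideal $\fm$, prove $\BT_\fm$ is Gorenstein, identify the cuspidal group with $\BT/I$, and conclude. The paper instead argues more directly (see \S5.2): by induction on the number of prime factors of $N$ it reduces to $J_0^{\new}(N)$, uses Eichler--Shimura plus newform theory to pin down the Hecke eigenvalues of a putative non-cuspidal rational $q$-torsion point, then performs an explicit level-raising to land on the single Eisenstein series $E_{D,D}$ at level $D^2$, and derives a contradiction from its \emph{constant terms} (Proposition~\ref{constant3}) rather than from any Gorenstein property. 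Your approach is closer in spirit to Mazur's original and to Ohta's square-free work; the paper's is more elementary and bypasses the structural ring-theoretic input, at the cost of the same excluded set of primes. Neither approach, as you observe, currently closes the gap at the bad primes.
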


It is clear that the above conjecture is equivalent to $J_0(N)(\BQ)_{tor}\subseteq C_0(N)$ for any positive integer $N$. To this date, it has been proved that:

$\bullet$ If $p\geq5$ is a prime and $r\in\BZ_{\geq1}$, then $J_0(p^r)(\BQ)[q^{\infty}]\subseteq C_0(p^r)[q^{\infty}]$ for any prime $q\nmid6p$. See \cite{L}

$\bullet$ Let $N$ be a square-free positive integer, then we have $J_0(N)(\BQ)[q^{\infty}]=C_0(N)[q^{\infty}]$ for any prime $q\nmid6$ (See \cite{Oh}). Note that when $N$ is square free, all the cusps of $X_0(N)$ are in fact $\BQ$-rational and hence $C_0(N)=C_0(N)(\BQ)$.

The first main result of this article is the following

\begin{thm}\label{M1} For any positive integer $N$, we have that
\[J_0(N)(\BQ)[q^{\infty}]=C_0(N)(\BQ)[q^{\infty}]\]
for any prime $q\nmid6\cdot N\cdot\varpi(N)$, where $\varpi(N):=\prod_{p\mid N}(p^2-1)$.
\end{thm}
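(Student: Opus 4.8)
The plan is to reduce the general level $N$ to the square-free case handled by Ohta by a careful level-lowering/raising argument, and then to control the extra torsion that the degeneracy maps can introduce. Write $N = M\cdot K$ where $M$ is the largest square-free divisor of $N$ with the same set of prime divisors (so $M = \prod_{p\mid N}p$) and factor out the $q$-primary part for a fixed prime $q\nmid 6N\varpi(N)$. For each prime $p\mid N$, say $p^e\| N$, there is a standard commutative diagram of degeneracy maps $X_0(N)\to X_0(N/p)$ (there are two of them, coming from $z\mapsto z$ and $z\mapsto pz$) and, dually, pushforward and pullback maps between the Jacobians. Iterating over all primes of $N$ one lands at $J_0(M)$. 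The first step is therefore to set up, for the composite degeneracy map $\pi\colon J_0(N)\to J_0(M)^{?}$ (a product of several copies of $J_0(M)$ indexed by the divisors of $N/M$), the induced maps on $q$-power torsion and on cuspidal subgroups, and to check that $\pi$ restricts to a map of cuspidal subgroups.

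Second, I would quantify the kernel and cokernel of $\pi$ on $q$-power torsion. The key input is that the kernel of a single degeneracy pair $(X_0(np)\to X_0(n))^2$ on Jacobians is annihilated — up to the Shimura subgroup phenomena — by an integer built from $p$ and from the index $[\Gamma_0(n):\Gamma_0(np)] = p\cdot(\text{something dividing }p+1)$, i.e. by a divisor of $p(p^2-1)$ or so; this is exactly why the factor $N\varpi(N)=N\prod_{p\mid N}(p^2-1)$ appears. Since $q\nmid 6N\varpi(N)$, the map $\pi$ is an isomorphism after localizing at $q$, both on torsion and on the cuspidal subgroups, so that
\[
J_0(N)(\BQ)[q^\infty]\ \xrightarrow{\ \sim\ }\ \Bigl(\prod J_0(M)(\BQ)[q^\infty]\Bigr)^{\pi\text{-image}},\qquad
C_0(N)(\BQ)[q^\infty]\ \xrightarrow{\ \sim\ }\ \Bigl(\prod C_0(M)(\BQ)[q^\infty]\Bigr)^{\pi\text{-image}}.
\]
One must be slightly careful because the $G_\BQ$-action enters through $\BQ(\mu_{(d,N/d)})$ for the cusps of non-square-free level, but since $q\nmid N$ the relevant cyclotomic extensions have degree prime to $q$, so taking $G_\BQ$-invariants commutes with localization at $q$ and the bookkeeping of rational cuspidal subgroups goes through.

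Third, apply Ohta's theorem at the square-free level $M$: for $q\nmid 6$ (hence certainly $q\nmid 6M$), $J_0(M)(\BQ)[q^\infty] = C_0(M)[q^\infty] = C_0(M)(\BQ)[q^\infty]$. Pulling this back through the isomorphism $\pi$ gives $J_0(N)(\BQ)[q^\infty] = C_0(N)(\BQ)[q^\infty]$, which is the theorem. The main obstacle I anticipate is the second step: making precise and uniform the statement that the degeneracy maps are isomorphisms on $q$-power torsion for all $q\nmid 6N\varpi(N)$. This requires (a) a clean description of the kernel of the simultaneous degeneracy map on Jacobians — this is classically related to an Eisenstein quotient and to the group scheme $\mu_p$ or the Shimura subgroup — together with an explicit annihilator dividing $p(p^2-1)$, and (b) ensuring the bound is genuinely multiplicative over the primes dividing $N$ when one iterates, which is where the full product $N\varpi(N)$ rather than a single prime power is needed. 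Controlling the cokernel (not just the kernel) and verifying that cuspidal subgroups are carried isomorphically — rather than merely into one another — will demand the most care.
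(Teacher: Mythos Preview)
Your proposal has a genuine gap at precisely the point you flag as the main obstacle. The map you set up, $\pi\colon J_0(N)\to J_0(M)^{?}$ obtained by pushing forward along all degeneracy maps $X_0(np)\to X_0(n)$, does \emph{not} have finite kernel: the connected component of $\ker\bigl(J_0(np)\to J_0(n)^2\bigr)$ is exactly the $p$-new abelian subvariety $J_0^{\mathrm{new}}(np)$, which is positive-dimensional in general and hence has $q$-torsion for every prime $q$. Thus there is no integer, built from $p(p^2-1)$ or otherwise, that annihilates this kernel. You appear to be conflating this with Ribet's result on the \emph{opposite} map $J_0(n)^2\to J_0(np)$ (via Picard functoriality), whose kernel is indeed finite and governed by the Shimura subgroup; but that map is not surjective, so it cannot be used to descend $J_0(N)(\BQ)[q^\infty]$ to level $M$ either. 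The real question is whether the new part has any \emph{$\BQ$-rational} $q$-torsion, and this is not a degree computation --- it is the heart of the matter.

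The paper's argument addresses exactly this. It proves the stronger assertion that $J_0(N)(\BQ)[q^\infty]=0$ for $q\nmid 6N\varpi(N)$ (so both sides of the theorem vanish), by induction on $\nu(N)$. The induction hypothesis forces any $P\in J_0(N)(\BQ)[q^\infty]$ to lie in $J_0^{\mathrm{new}}(N)$. Then Eichler--Shimura gives $T_\ell(P)=(1+\ell)P$ for $\ell\nmid N$, while the newform theory forces $T_\ell(P)=\epsilon_\ell P$ with $\epsilon_\ell\in\{0,\pm1\}$ for $\ell\mid N$. A nonzero such $P$ would produce a normalized eigenform $\Theta\in S_2(\Gamma_0(N),\BF_q)$ with these eigenvalues; level-raising (applying $[\ell]^-$ or $1+\ell^{-1}\gamma_\ell$ at each $\ell\mid D/C$) promotes $\Theta$ to a cusp form of level divisible by $D^2$ with $T_\ell=0$ for all $\ell\mid D$, which by the $\fq$-expansion principle must equal the Eisenstein series $E_{D,D}$ modulo $q$. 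But the explicit constant terms of $E_{D,D}$ computed in the paper are units away from $6D\varpi(D)$, so $E_{D,D}$ cannot be cuspidal modulo $q$ --- a contradiction. In particular Ohta's square-free theorem is not invoked; the argument is self-contained from the Mazur--Ogg base case.
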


Our proof of this theorem is based on a careful study of modular Jacobian varieties of the form $J_0(DC)$, where $D$ is a positive square-free integer and $C$ is a positive divisor of $D$. In fact, in this situation, we can prove that the torsion points of $J_0(DC)$ over some quadratic fields also come from the cusps of $X_0(DC)$. Note that since the cusps of $X_0(DC)$ are all defined over $\BQ(\mu_C)$ as remarked before, so is the cuspidal subgroup $C_0(DC)$ of $J_0(DC)(\overline{\BQ})$. For any quadratic Dirichlet character $\psi$ of conductor $f_\psi\mid C$, we define
\[C_0(DC)(\psi):=\{P\in C_0(DC):\ \sigma(P)=\psi(\sigma)\cdot P \text{ for any }\sigma\in G_\BQ\},\]
and define similarly
\[J_0(DC)(\psi):=\{P\in J_0(DC)(\overline{\BQ}):\ \sigma(P)=\psi(\sigma)\cdot P \text{ for any }\sigma\in G_\BQ\}.\]
Then our second main result is the following

\begin{thm}\label{M2} Let $D$ be a positive square-free integer and $C$ a positive divisor of $D$. Then for any quadratic Dirichlet character $\psi$ of conductor $f_\psi\mid C$, we have that
\[J_0(DC)(\psi)[q^{\infty}]=C_0(DC)(\psi)[q^{\infty}]\]
for any prime $q\nmid6\cdot D\cdot\varpi(D)$.
\end{thm}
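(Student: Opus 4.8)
The plan is to fix a prime $q\nmid 6D\varpi(D)$ and to prove the single inclusion $J_0(DC)(\psi)[q^\infty]\subseteq C_0(DC)$. Since $DC$ and $D$ have the same prime divisors and $D\mid 6D\varpi(D)$, such a $q$ is automatically coprime to the level $DC$, so $J_0(DC)$ has good reduction at $q$. The reverse inclusion needs no work: cuspidal divisor classes are torsion by Manin--Drinfeld and $C_0(DC)$ is defined over $\BQ(\mu_C)$, so $C_0(DC)(\psi)[q^\infty]\subseteq J_0(DC)(\psi)[q^\infty]$, while conversely any element of $J_0(DC)(\psi)[q^\infty]$ lying in $C_0(DC)$ automatically satisfies $\sigma(P)=\psi(\sigma)P$ and hence lies in $C_0(DC)(\psi)$. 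Thus the asserted equality follows from the displayed inclusion.

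The first real step is to show that $M:=J_0(DC)(\psi)[q^\infty]$ is \emph{Eisenstein} over the Hecke algebra $\fT=\fT(DC)\subseteq\End_\BQ(J_0(DC))$ generated by the $T_\ell$ $(\ell\nmid DC)$, the $U_p$ $(p\mid DC)$ and the Atkin--Lehner involutions $w_p$. For $\ell\nmid qDC$ the variety $J_0(DC)$ has good reduction at $\ell$ and $\psi$ is unramified there, so reduction modulo $\ell$ is injective on $q$-power torsion; combining the Eichler--Shimura congruence $\Frob_\ell^2-T_\ell\Frob_\ell+\ell=0$ with $\Frob_\ell(P)=\psi(\ell)P$ and $\psi(\ell)^2=1$ yields $T_\ell P=\psi(\ell)(\ell+1)P$ for every $P\in M$. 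At a bad prime $p\mid DC$ the Eichler--Shimura relation must be replaced by the geometry of the special fibre of a suitable model of $X_0(DC)$ at $p$: Deligne--Rapoport when $p\parallel DC$ (that is, $p\mid D/C$), and Edixhoven's description of the stable model when $p^2\parallel DC$ (that is, $p\mid C$), together with the behaviour of $P$ under the decomposition and inertia groups at $p$ and the compatibility of $w_p$ with twisting at $p$; this should produce relations of the form $U_pP=\varepsilon_pP$ with $\varepsilon_p$ an explicit root of unity, valid after inverting $p$ and the primes dividing $p^2-1$. Assembling these, $M$ is annihilated by an Eisenstein-type ideal $\fI_\psi\subseteq\fT$, hence is supported on the finite set of maximal ideals $\fm\supseteq\fI_\psi$ of residue characteristic $q$; the residual eigenvalue system $\ell\mapsto\psi(\ell)(\ell+1)$ identifies these with the Eisenstein maximal ideals attached to the weight-two Eisenstein series of type $E_2^{\psi,\psi}$ and their level-raising translates on $\Gamma_0(DC)$, which are exactly the Eisenstein series whose associated cuspidal subgroups (in the sense recalled below) generate $C_0(DC)(\psi)$.

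The second step is a local analysis at each such $\fm$, arranged so as to lose only primes dividing $6D\varpi(D)$. For $q$ prime to $DC\cdot\prod_{p\mid D}(p^2-1)$ one expects the multiplicity-one statement $\dim_{\fT/\fm}J_0(DC)[\fm]=2$ together with the Gorenstein-ness of $\fT_\fm$, the two Jordan--H\"older factors of $J_0(DC)[\fm]$ being the $G_\BQ$-modules $\psi$ and $\psi\otimes\omega$ with $\omega$ the mod-$q$ cyclotomic character (distinct since $q\geq5$). Then the $\psi$-isotypic submodule of $J_0(DC)[\fm]$ is at most one-dimensional, so by Gorenstein duality $M_\fm$ is a cyclic $\fT_\fm/\fI_\psi$-module and $|M_\fm|\leq|\fT_\fm/\fI_\psi|$. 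On the other hand, computing the divisor of the modular unit attached to $E_2^{\psi,\psi}$ and its translates over $\BQ(\mu_C)$ should give the cuspidal subgroup in the form $C_0(DC)(\psi)[q^\infty]_\fm\cong\fT_\fm/\fI_\psi$ after inverting the relevant small primes --- the twisted analogue of Mazur's isomorphism $\BT_0(p)/I_{\Gamma_0(p)}(E)\cong C_0(p)$. Since $C_0(DC)(\psi)\subseteq J_0(DC)(\psi)$, comparing orders forces $M_\fm=C_0(DC)(\psi)[q^\infty]_\fm$; summing over $\fm$ gives $J_0(DC)(\psi)[q^\infty]\subseteq C_0(DC)$. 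Throughout this step the key device is that $E_2^{\psi,\psi}$ is the $\psi$-twist of an Eisenstein series on a square-free level dividing $D$, which lets one import the square-free-level results of Mazur \cite{M} and Ohta \cite{Oh}, both for multiplicity one and for the cuspidal subgroup structure, rather than re-proving them.

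I expect the principal obstacle to be the local analysis at the primes $p\mid C$, where $p^2\parallel DC$ and $X_0(DC)$ fails to have semistable reduction at $p$: there the Eichler--Shimura relation is simply unavailable, and one must work with Edixhoven's stable model, exploiting (when $p\mid f_\psi$) the ramification of the $\psi$-twist at $p$ to extract a clean local relation, and more generally carrying out a delicate analysis of the $q$-power torsion of the N\'eron model at $p$. This step, together with the contribution of the component groups at the primes $p\mid D/C$, is precisely where the factor $\varpi(D)=\prod_{p\mid D}(p^2-1)$ is lost: it records the primes one is forced to invert in order to control the Galois action on the torsion at the bad places. A secondary difficulty is the bookkeeping of the cuspidal subgroup of $X_0(DC)$ over $\BQ(\mu_C)$ --- computing the orders $|C_0(DC)(\psi)[q^\infty]_\fm|$ and matching them with cuspidal classes on the square-free-level curves through the twist --- which is needed to make the order comparison in the second step effective.
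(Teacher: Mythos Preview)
Your proposal outlines a Mazur--Ohta style program (Eisenstein maximal ideals, multiplicity one, Gorenstein duality, order comparison), and you correctly identify its two weak points: the local analysis at primes $p\mid C$ where $p^2\parallel DC$, and multiplicity one at Eisenstein primes of a non-squarefree level. These are not minor technicalities; neither is available in the literature at this level, and your phrases ``one expects'' and ``should produce'' are doing a lot of work. As written this is a plausible strategy but not a proof.

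The paper's argument is both different and considerably simpler, and it sidesteps exactly the two obstacles you flagged. The key observation is that for $q\nmid 6D\varpi(D)$ the claim reduces to the \emph{vanishing} statement $J_0(DC)(\psi)[q^\infty]=0$ (and hence also $C_0(DC)(\psi)[q^\infty]=0$), so no order comparison with the cuspidal group is needed at the end. The proof is by induction on the level using the new-part map $\iota_{DC}:J_0(DC)\to\prod J_0(dc)$; for proper sublevels $dc$ with $f_\psi\nmid c$ one kills $J_0(dc)(\psi)[q^\infty]$ by a direct ramification argument (good reduction if $f_\psi\nmid d$, semistable unipotence if $f_\psi\mid d$ but $f_\psi\nmid c$), and by induction for the remaining sublevels. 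This reduces to showing $J_0^{\mathrm{new}}(DC)(\psi)[q^\infty]=0$.

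On the new part the bad Hecke eigenvalues come for free from Atkin--Lehner \emph{newform theory}, not from Deligne--Rapoport or Edixhoven: $T_\ell$ acts on $J_0^{\mathrm{new}}(DC)$ as $\pm1$ for $\ell\mid D/C$ and as $0$ for $\ell\mid C$. A nonzero class in $J_0^{\mathrm{new}}(DC)(\psi)[q^\infty]$ therefore produces a nonzero eigenform $\theta\in S_2(\Gamma_0(DC),\BF_q)$ with prescribed eigenvalues. The paper then applies explicit level-raising operators $[\ell]^-$ or $1+\ell^{-1}\gamma_\ell$ at each $\ell\mid D/C$ to push $\theta$ to level $D^2$, landing in $S_2(\Gamma_0(D^2),\BF_q)[I_{\Gamma_0(D^2)}(E_{D,D,\psi})]$. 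Finally, rather than invoking multiplicity one or Gorenstein, the paper computes the constant terms of $E_{D,D,\psi}$ directly (Proposition~\ref{constant3}) and shows they are units in $\BZ[\tfrac{1}{6D\varpi(D)}]$; together with the index computation (Proposition~\ref{index}) this forces $q\mid 6D\varpi(D)$, a contradiction. So the hard input is the explicit Eisenstein series $E_{M,L,\psi}$ and their constant terms, not the fine structure of $J_0(DC)[\fm]$.
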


In our investigation, the relation between the weight two Eisenstein series and the cuspidal subgroup plays a very important role, so we will give a brief review of this relation in the second section. Then, in the third section, we construct a Hecke eigne-basis $\{E_{M,L,\psi}\}$ for the space $\CE_2(\Gamma_0(DC),\BC)$ of Eisenstein series of weight two and level $\Gamma_0(DC)$ (see Definition~\ref{key definition} and Proposition~\ref{eigen}). While all these Eisenstein series are interesting, we will in this article focus on the study of those $E_{M,L,\psi}$ with $\psi$ a \emph{quadratic} character. The associated group $C_{\Gamma_0(DC)}(E_{M,L,\psi})$ will be called as \emph{quadratic cuspidal subgroups} of $J_0(DC)$. The order and the Hecke module structure of these quadratic cuspidal subgroups are determined up to a factor of $6D$ (see Theorem~\ref{order} and Theorem~\ref{index}) in the fourth section. This will enable us to prove our main results in the final section.

Notations: For any positive integer $N=\prod_{p\mid N}p^{v_p(N)}$, we denote by $\varpi(N)=\prod_{p\mid N}(p^2-1)$, $\nu(N)=\sum_{p\mid N}v_p(N)$ and $\mu(N)=\prod_{p\mid N}(p+1)$. 

Let $\fq$ to be the function $z\mapsto e^{2\pi iz}$ on the upper half plane. For any function $g$ on the upper half plane and any $\gamma=\left(
                        \begin{array}{cc}
                          a & b \\
                          c & d \\
                        \end{array}
                      \right)
\in GL^+_2(\BR)$, we denote by $g|\gamma$ to be the function $z\mapsto det(\gamma)\cdot g(\gamma z)\cdot(cz+d)^{-2}$.

\section{Background materials}
In this section, we are going to recall the relation between weight two Eisenstein series and cuspidal subgroups. For more details and proof, the reader is referred to \cite{St} and \cite{St2}.

2.1. In the following, we fix a positive integer $N$ and denote by $\Gamma$ to be either $\Gamma_0(N)$ or $\Gamma_1(N)$. Let $\CM_2(\Gamma,\BC)$ be the space of weight two modular forms of level $\Gamma$, then
\[\CM_2(\Gamma,\BC)=S_2(\Gamma,\BC)\oplus\CE_2(\Gamma,\BC),\]
where $S_2(\Gamma,\BC)$ is the sub-space of cusp forms and $\CE_2(\Gamma,\BC)$ is the sub-space of Eisenstein series. For any positive integer $n$, there is a Hecke operator $\CT^{\Gamma}_n$ acting on $\CM_2(\Gamma,\BC)$ with respect to the above decomposition. We denote the restriction of $\CT^{\Gamma}_n$ to $S_2(\Gamma,\BC)$ by $T^{\Gamma}_n$. Let $\CT_\Gamma$ be the $\BZ$-algebra generated by $\{\CT^{\Gamma}_n\}_{n\geq1}$. Then the \emph{full} Hecke algebra $\BT_\Gamma$ of level $\Gamma$ is defined to be the restriction of $\CT_\Gamma$ to $S_2(\Gamma,\BC)$, which is the $\BZ$-algebra generated by all the $T_n$'s. When $\Gamma=\Gamma_0(N)$, we will also denote $\BT_{\Gamma_0(N)}$ as $\BT_0(N)$, which is in fact generated by the $T^{\Gamma_0(N)}_\ell$ for all the primes $\ell$.

2.2. Let $X_\Gamma$ be the modular curve over $\BQ$ of level $\Gamma$. We denote by $cusp(\Gamma)$ to be the set of cusps of $X_\Gamma$, and by $Y_\Gamma$ to be the complement of $cusp(\Gamma)$ in $X_\Gamma$. Let $J_\Gamma$ be the Jacobian variety of $X_\Gamma$ over $\BQ$. For any $g\in\CM_2(\Gamma,\BC)$, let $\omega_g$ be the meromorphic differential on $X_\Gamma(\BC)$ whose pullback to the Poincar$\acute{e}$ upper half-plane $\CH$ equals $g(z)dz$. The differential $\omega_g$ has all its poles supported at the cusps of $X_\Gamma$. Moreover, $g$ is a cusp form if and only if $\omega_g$ is holomorphic, or,  $\Res_x(\omega_g)=0$ for any $x\in cusp(\Gamma)$. Denote by $\Div^0(cusp(\Gamma);\BC)$ to be $\Div^0(cusp(\Gamma);\BZ)\otimes_\BZ\BC$, then we define the following homomorphism of $\BC$-vector spaces
\[\delta_\Gamma:\CE_2({\Gamma,\BC})\rightarrow \Div^0(cusp(\Gamma);\BC),\]
such that
\[E\mapsto 2\pi i\sum_{x\in{cusp(\Gamma)}}\Res_x(\omega_E)\cdot[x],\]
with $2\pi i\cdot\Res_x(\omega_E)=e_x\cdot a_0(E;[x])$, where $e_x$ is the ramification index of $X_\Gamma$ at $x$ and $a_0(g;[x])$ is the constant term of the Fourier expansion of $g$ at the cusp $x$. The homomorphism $\delta_\Gamma$ is actually an isomorphism by the theorem of Manin-Drinfeld. Because the restriction of $\omega_E$ to $Y_\Gamma$ is holomorphic, this differential induces the following periods integral homomorphism
\begin{align*}
\xi_E:H_1(Y_\Gamma(\BC),\BZ)\rightarrow\BC,\ [c]\mapsto\int_{c}\omega_E
\end{align*}
where $[c]$ is the homology class represented by a $1$-cycle $c$ on $Y_\Gamma(\BC)$. Note that, for any cusp $x$, we have
\begin{align*}
  \int_{c_x}\omega=2\pi i\cdot \Res_x(\omega_E),
\end{align*}
where $c_x$ is a small circle around $x$.

\begin{defn}\label{def}Let $E\in\CE_2(\Gamma,\BC)$ be a weight-two Eisenstein series of level $\Gamma$. We denote by $\CR_\Gamma(E)$ to be the sub-$\BZ$-module of $\BC$ generated by the coefficients of $\delta_\Gamma(E)$, and by $\CR(E)^{\vee}$ to be the dual $\BZ$-module of $\CR(E)$. Then :

(1) The cuspidal subgroup $C_\Gamma(E)$ associated with $E$ is defined to be the subgroup of $J_\Gamma(\overline{\BQ})$ which is generated by $\{w_\Gamma\left(\phi\circ\delta_\Gamma(E)\right)\}_{\phi\in\CR(E)^{\vee}}$, where $w_\Gamma$ is the Atkin-Lehner involution;
l
(2) The periods $\CP_\Gamma(E)$ of $E$ is defined to be the image of $\xi_E$. Since $\CP_\Gamma(E)$ contains $\CR_\Gamma(E)$ by the above remark, we can define $A_\Gamma(E)$ to be the quotient $\CP_\Gamma(E)/\CR_\Gamma(E)$;

(3) The Eisenstein ideal $I_\Gamma(E)$ of $E$ is defined to be the image of $Ann_{\CT_{\Gamma}}(E)$ in $\BT_{\Gamma}$.
\end{defn}

\begin{remark}
The above definition of $C_\Gamma(E)$ is slightly different from that given in \cite{St},as we have added an action of the Atkin-Lehner operator $w_\Gamma$. Since $w_\Gamma$ is an isomorphism, this modification does not change the order of the associated cuspidal subgroups. However, $C_\Gamma(E)$ is now annihilated by $I_\Gamma(E)$ under the usual action of the Hecke algebra, because $\CT^t_\ell\circ\delta_\Gamma=\delta_\Gamma\circ\CT_\ell$ and $\CT^t_\ell\circ w_\Gamma=w_\Gamma\circ\CT_\ell$ for any prime $\ell$.
\end{remark}

2.3. By Proposition 1.1 and Theorem 1.2 of \cite{St2}, $A_\Gamma(E)$ is finite and there is a perfect pairing $C_\Gamma(E)\times A_\Gamma(E)\rightarrow\BQ/\BZ$. Thus, the determination of the order of $C_\Gamma(E)$ is reduced to that of $\CP_\Gamma(E)$. In the following, we will recall a method due to Stevens for the computation of the periods. The reader is referred to \cite{St2} for details.

We first consider the case when $\Gamma=\Gamma_1(N)$. Denote by $S_N$ to be the set of all primes $p$ satisfying $p\equiv-1\pmod{4N}$. Let $\fX_N$ be the set of all non-quadratic Dirichlet character $\chi$ whose conductor is a prime in $S_N$, and $\fX^{\infty}_N$ be the set of all non-quadratic Dirichlet character $\chi$ whose conductor is of the form $p^M_\chi$ with $p_\chi\in S_N$ and $M$ some positive integer.

For any $E=\sum^{\infty}_{n=0}a_n(E;[\infty])\cdot \fq^n\in\CE_2(\Gamma_1(N),\BC)$ and any Dirichlet character $\chi$, the $L$-function associated to the pair $(E,\chi)$ is defined as
\begin{align*}
L(E,\chi,s):=\sum^{\infty}_{n=1}\frac{a_n(E;[\infty])\cdot\chi(n)}{n^s}.
\end{align*}
If $\chi\in\fX^{\infty}_N$ is of conductor $p^M_\chi$, then we define
\begin{align*}
  &\Lambda(E,\chi,1):=\frac{\tau(\overline{\chi})\cdot L(E,\chi,1)}{2\pi i},\\
  &\Lambda_{\pm}(E,\chi,1):=\frac{1}{2}(\Lambda(E,\chi,1)\pm\Lambda(E,\chi\cdot(\frac{}{p_\chi}),1)),
\end{align*}
where $(\frac{}{p_\chi})$ is the Legendre symbol associated to $p_\chi$. It is proved in Theorem 1.3 of \cite{St2} that, if $\CM$ is a finitely generated sub-$\BZ$-module of $\BC$, then the following are equivalent:

(St1) $\CP_{\Gamma_1(N)}(E)\subseteq\CM$;

(St2) $\CR_{\Gamma_1(N)}(E)\subseteq\CM$ and $\Lambda_{\pm}(E,\chi,1)\in\CM[\chi,\frac{1}{p_\chi}]$ for any $\chi\in\fX_{N}$;

(St3) $\CR_{\Gamma_1(N)}(E)\subseteq\CM$ and $\Lambda_{\pm}(E,\chi,1)\in\CM[\chi,\frac{1}{p_\chi}]$ for any $\chi\in\fX^{\infty}_N$.

Because $\Lambda_{\pm}(E,\chi,1)$ is essentially the Bernoulli numbers whose integrality and divisibility are well known (see Theorem 4.2 of \cite{St2}), we can then use the above result to determine the periods $\CP_{\Gamma_1(N)}(E)$ of $E$ and hence the order of $C_{\Gamma_1(N)}(E)$.

On the other hand, if $\Gamma=\Gamma_0(N)$, then Stevens' method can only determine $C_{\Gamma_0(N)}(E)$ up to its intersection with the Shimura subgroup. Recall that, if we denote by $\pi_N$ to be the natural projection of $X_1(N)$ to $X_0(N)$, then the Shimura subgroup of $J_0(N)$ is defined to be
\[\Sigma_N:=ker\left(\pi^*_N:\ J_0(N)\rightarrow J_1(N)\right),\]
which is a finite abelian group and is of multiplicative type as a $G_\BQ$-module. For any $E\in\CE_2(\Gamma_0(N),\BC)$, we define
\[A^{(s)}_{\Gamma_0(N)}(E):=\left(\CP_{\Gamma_1(N)}(E)+\CR_{\Gamma_0(N)}(E)\right)/\CR_{\Gamma_0(N)}(E),\]
then there is an exact sequence
\[
\xymatrix@C=0.5cm{
  0 \ar[r] & \Sigma_N\bigcap C_{\Gamma_0(N)}(E) \ar[r] & C_{\Gamma_0(N)}(E) \ar[r] & A^{(s)}_{\Gamma_0(N)}(E) \ar[r] & 0, }
  \]
which enables us to determine the order of $C_{\Gamma_0(N)}(E)/\Sigma_N\bigcap C_{\Gamma_0(N)}(E)$.

2.4. Finally, we recall some basic properties of the collection of functions $\{\phi_{\underline{x}}\}_{\underline{x}\in(\BQ/\BZ)^{\oplus2}}$ due to Hecke (see \cite{St}, Chapter 2, \S2.4) which we will need later. For any $\underline{x}=(x_1,x_2)\in(\BQ/\BZ)^{\oplus2}$, the Fourier expansion of $\phi_{\underline{x}}$ at infinity is
\begin{align}
  \phi_{\underline{x}}(z)+\delta(\underline{x})\cdot\frac{i}{2\pi(z-\overline{z})}=\frac{1}{2} B_2(x_1)-P_{\underline{x}}(z)-P_{-\underline{x}}(z)
\end{align}
for any $z\in\CH$, where $B_2(t)=\langle{t}\rangle^2-\langle{t}\rangle+\frac{1}{6}$ is the second Bernoulli polynomial and
\begin{align}
P_{\underline{x}}(z)=\sum_{k\in\BQ_{>0},k\equiv x_1(1)}k\sum^{\infty}_{m=1}e^{2\pi im(kz+x_2)}
\end{align}
and $\delta(\underline{x})$ is defined to be $1$ or $0$ according to $\underline{x}=0$ or not. If $\underline{x}\neq0$, then $\phi_{\underline{x}}$ is a (holomorphic) Eisenstein series. Moreover, for any $\underline{x}\in(\BQ/\BZ)^{\oplus2}$ and $\gamma\in SL_2(\BZ)$, we have
\begin{align}
  \phi_{\underline{x}}|\gamma=\phi_{\underline{x}\cdot\gamma}
\end{align}
where ${\underline{x}\cdot\gamma}$ is the natural right action of $\gamma$ on the row vector of length two. The whole collection of functions satisfy the following important \emph{distribution law}
\begin{align}
  \phi_{\underline{x}}=\sum_{\underline{y}:\ \underline{y}\cdot \alpha=\underline{x}}\phi_{\underline{y}}|\alpha
\end{align}
where $\alpha$ is any matrix in $M_2(\BZ)$ with positive determinant.

\section{An eigen-basis for $\CE_2(\Gamma_0(DC),\BC)$}
In this section, we will construct a basis for $\CE_2(\Gamma_0(DC),\BC)$ which plays a fundamental role in our later investigations. We will also show that the Eisenstein series in this basis are all eigenforms.

3.1. We will first introduce some operators on the $\BC$-vector space $\CM_2$ of weight-two holomorphic modular forms of all levels. For any prime $p$, we define an operator $\gamma_p$ on $\CM_2$ as following
\[\gamma_p:\CM_2\rightarrow\CM_2,\ g\mapsto g|\left(
                                                                                                 \begin{array}{cc}
                                                                                                   p & 0 \\
                                                                                                   0 & 1 \\
                                                                                                 \end{array}
                                                                                               \right).\]
If $\psi$ be a Dirichlet character of conductor $f_\psi$ and $p\nmid f_\psi$ is a prime, then we define the following two operators $[p]^{\pm}_\psi$ on $\CM_2$ as
\begin{align*}
[p]^+_\psi:&=1-\psi(p)\cdot\gamma_p\\
[p]^-_\psi:&=1-p^{-1}\cdot\psi^{-1}(p)\cdot\gamma_p
\end{align*}
More precisely, for any $g\in\CM_2 $ and any $z$ in the Poinc$\acute{a}$re upper half-plane $\CH$, we have that
\begin{align*}
[p]^+_\psi(g)(z)&=g(z)-p\cdot\psi(p)\cdot g(pz),\\
[p]^-_\psi(g)(z)&=g(z)-\psi^{-1}(p)\cdot g(pz).
\end{align*}
It is clear that if $p_1$ and $p_2$ are two primes not dividing $f_\psi$, then the four operators $[p_1]^+_\psi,[p_1]^-_\psi,[p_2]^+_\psi$ and $[p_2]^-_\psi$ are commutative with each other. Thus we can define, for any positive square-free integer $M$ prime to $f_\psi$, two operators $[M]^{\pm}_\psi$ on $\CM_2$ as
\[[M]^{\pm}_\psi:=[p_1]^{\pm}_\psi\circ[p_2]^{\pm}_\psi\circ...\circ[p_k]^{\pm}_\psi,\]
with $M=p_1\cdot p_2\cdot\cdot\cdot p_k$ in any order. When $\psi=1$ is the trivial Dirichlet character, we will write $[M]^{\pm}_\psi$ simply as $[M]^{\pm}$ for any positive square-free integer $M$.

\begin{remark}\label{zero}
It is easy to see that the above operators $[M]^{\pm}_\psi$ can also be applied to any function on $\CH$ in the same manner. In particular, we have that
\begin{align*}
[p]^+(\frac{1}{z-\overline{z}})=\frac{1}{z-\overline{z}}-\frac{p}{pz-p\overline{z}}=0,
\end{align*}
for any prime $p$. It follows that $[M]^+(\frac{1}{z-\overline{z}})=0$ for any square-free integer $M>1$.
\end{remark}

\begin{lem}\label{1.1}
Let $\psi$ be a Dirichlet character of conductor $f_\psi$, $p\nmid f_\psi$ be a prime and $N$ be a positive integer, then $[p]^{\pm}_\psi$ maps $M_2(\Gamma_0(N),\BC)$ to $M_2(\Gamma_0(Np),\BC)$ and satisfies the following properties

(1) For any prime $\ell\neq p$, we have that $\CT^{\Gamma_0(Np)}_\ell\circ[p]^{\pm}_\psi=[p]^{\pm}_\psi\circ\CT^{\Gamma_0(N)}_\ell$;

(2) If $p\nmid N$, then $\CT^{\Gamma_0(Np)}_p\circ[p]^{+}_\psi=\CT^{\Gamma_0(N)}_p-\gamma_p-p\cdot\psi(p)$ and $\CT^{\Gamma_0(Np)}_p\circ[p]^{-}_\psi=\CT^{\Gamma_0(N)}_p-\gamma_p-\psi^{-1}(p)$;

(3) It $p\mid N$, then $\CT^{\Gamma_0(Np)}_p\circ[p]^{+}_\psi=\CT^{\Gamma_0(N)}_p-p\cdot\psi(p)$ and $\CT^{\Gamma_0(Np)}_p\circ[p]^{-}_\psi=\CT^{\Gamma_0(N)}_p-\psi^{-1}(p)$.
\end{lem}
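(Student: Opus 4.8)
The plan is to reduce everything to explicit computations with Fourier expansions and with the double-coset description of the Hecke operators. First I would verify that $[p]^{\pm}_\psi$ lands in $M_2(\Gamma_0(Np),\BC)$: since $\gamma_p = g \mapsto g|\left(\begin{smallmatrix} p & 0 \\ 0 & 1\end{smallmatrix}\right)$ sends $M_2(\Gamma_0(N),\BC)$ into $M_2(\Gamma_0(Np),\BC)$ — this is the standard degeneracy/oldform map $g(z)\mapsto p\,g(pz)$, which one checks directly against the congruence conditions defining $\Gamma_0(Np)$ — and since both $[p]^{+}_\psi$ and $[p]^{-}_\psi$ are $\BC$-linear combinations of the identity and $\gamma_p$ (with the constants $\psi(p)$, $p^{-1}\psi^{-1}(p)$), the image lands in $M_2(\Gamma_0(Np),\BC)$. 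Holomorphy and the cusp conditions are preserved because they are preserved separately by the identity and by $\gamma_p$.

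For part (1), I would use the fact that for $\ell \neq p$ the Hecke operator $\CT^{\Gamma_0(M)}_\ell$ (for $M = N$ or $M = Np$) is given by the same double coset $\Gamma_0(M)\left(\begin{smallmatrix}1 & 0 \\ 0 & \ell\end{smallmatrix}\right)\Gamma_0(M)$, with coset representatives $\left(\begin{smallmatrix}1 & j \\ 0 & \ell\end{smallmatrix}\right)$ for $0 \le j < \ell$ together with $\left(\begin{smallmatrix}\ell & 0 \\ 0 & 1\end{smallmatrix}\right)$ when $\ell \nmid M$ — and crucially the representatives can be chosen not to depend on the power of $p$ in the level, since $\gcd(\ell,p)=1$. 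Then the commutation $\CT^{\Gamma_0(Np)}_\ell \circ \gamma_p = \gamma_p \circ \CT^{\Gamma_0(N)}_\ell$ follows from the commutativity in $GL_2^+(\BR)$ of the matrix $\left(\begin{smallmatrix}p & 0 \\ 0 & 1\end{smallmatrix}\right)$ with the slash-action by the $\ell$-coset representatives (one gets the same set of products up to $\Gamma_0(Np)$-equivalence on the left). Since $[p]^{\pm}_\psi$ is a constant-coefficient combination of $\id$ and $\gamma_p$, part (1) drops out.

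For parts (2) and (3), I would work out $\CT^{\Gamma_0(Np)}_p \circ \gamma_p$ and $\CT^{\Gamma_0(Np)}_p \circ \id$ explicitly on Fourier expansions, splitting into the two cases $p \nmid N$ and $p \mid N$. Write $g = \sum a_n \fq^n$. When $p\mid N$, the operator $\CT^{\Gamma_0(Np)}_p$ is the "$U_p$" operator $\sum a_n\fq^n \mapsto \sum a_{pn}\fq^n$, while $\gamma_p$ is $\sum a_n \fq^n \mapsto p\sum a_n \fq^{pn}$; hence $\CT^{\Gamma_0(Np)}_p \circ \gamma_p\,(g) = p\sum a_n\fq^n = p\cdot g$, and since $\CT^{\Gamma_0(Np)}_p(g) = \CT^{\Gamma_0(N)}_p(g)$ already holds when $p\mid N$ (both are $U_p$ at level divisible by $p$), one gets $\CT^{\Gamma_0(Np)}_p\circ[p]^{+}_\psi = \CT^{\Gamma_0(N)}_p - p\cdot\psi(p)$ and $\CT^{\Gamma_0(Np)}_p\circ[p]^{-}_\psi = \CT^{\Gamma_0(N)}_p - \psi^{-1}(p)$, which is (3). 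When $p\nmid N$, the relation between $\CT^{\Gamma_0(Np)}_p$ ($=U_p$ at level $Np$) and $\CT^{\Gamma_0(N)}_p$ ($=T_p$ at level $N$) is the classical identity $T_p = U_p + p\langle p\rangle V_p$ on forms of level $N$, which in weight two and trivial nebentypus reads $\CT^{\Gamma_0(N)}_p = \CT^{\Gamma_0(Np)}_p + \gamma_p$ when both sides act on $M_2(\Gamma_0(N),\BC)$ viewed inside $M_2(\Gamma_0(Np),\BC)$; combining this with $\CT^{\Gamma_0(Np)}_p\circ\gamma_p = p\cdot\id$ (same Fourier computation as above, valid since after applying $\gamma_p$ the form has level divisible by $p$ so $\CT^{\Gamma_0(Np)}_p$ acts as $U_p$) yields $\CT^{\Gamma_0(Np)}_p\circ[p]^{+}_\psi = \CT^{\Gamma_0(N)}_p - \gamma_p - p\cdot\psi(p)$ and similarly $\CT^{\Gamma_0(Np)}_p\circ[p]^{-}_\psi = \CT^{\Gamma_0(N)}_p - \gamma_p - \psi^{-1}(p)$, which is (2).

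The main obstacle, and the step deserving the most care, is the bookkeeping in parts (2)--(3): one must be scrupulous about \emph{which} Hecke operator ($T_p$ vs.\ $U_p$) is meant at each level, how the degeneracy map $\gamma_p$ interacts with it, and the sense in which $\CT^{\Gamma_0(N)}_p$ is being applied to a form that genuinely lives at level $Np$. Everything else — part (1) and the mapping property — is a routine double-coset or Fourier-coefficient verification. I would organize the argument around the single clean identity $\CT^{\Gamma_0(Np)}_p \circ \gamma_p = p\cdot\id$ on $M_2(\Gamma_0(N),\BC)$ and the classical $T_p$–$U_p$ relation, from which all three parts follow by linearity in $[p]^{\pm}_\psi = \id - (\text{const})\cdot\gamma_p$.
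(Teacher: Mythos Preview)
Your proposal is correct and follows essentially the same approach as the paper: both reduce to the two identities $\CT^{\Gamma_0(Np)}_p\circ\gamma_p = p\cdot\id$ and (when $p\nmid N$) $\CT^{\Gamma_0(Np)}_p = \CT^{\Gamma_0(N)}_p - \gamma_p$, then assemble the result by linearity in $[p]^{\pm}_\psi$. The only cosmetic difference is that the paper verifies these identities by direct matrix/coset computations (writing out $\gamma_p\cdot\sum_{k=0}^{p-1}\left(\begin{smallmatrix}1&k\\0&p\end{smallmatrix}\right)$ explicitly), whereas you phrase them via Fourier expansions and the classical $T_p = U_p + pV_p$ relation; these are equivalent presentations of the same argument.
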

\begin{proof}
Since $\gamma_p$ maps $M_2(\Gamma_0(N),\BC)$ to $M_2(\Gamma_0(Np),\BC)$ and $[p]^{\pm}_\psi$ is defined to be a linear combination of the identity map and $\gamma_p$, we find that $[p]^{\pm}_\psi$ also maps $M_2(\Gamma_0(N),\BC)$ to $M_2(\Gamma_0(Np),\BC)$. Moreover, if $\ell$ is a prime and $\ell\neq p$, then $\gamma_p$ commutes with $\CT_\ell=\sum^{\ell-1}_{k=0}\left(
                                                                                           \begin{array}{cc}
                                                                                             1 & k \\
                                                                                             0 & \ell \\
                                                                                           \end{array}
                                                                                         \right)+\left(
                                                                                                   \begin{array}{cc}
                                                                                                     \ell & 0 \\
                                                                                                     0 & 1 \\
                                                                                                   \end{array}
                                                                                                 \right)
$ (or $\sum^{\ell-1}_{k=0}\left(
         \begin{array}{cc}
           1 & k \\
           0 & \ell \\
         \end{array}
       \right)
$) if $\ell\nmid N$ (or respectively $\ell\mid N$) as operators on corresponding space of modular forms, so the first assertion follows.

If $p\nmid N$, then we have by definition that
\begin{align*}
  \CT^{\Gamma_0(Np)}_p\circ[p]^{+}_\psi(g)&=g|\left[1-\psi(p)\cdot\left(
                                                                    \begin{array}{cc}
                                                                      p & 0 \\
                                                                      0 & 1 \\
                                                                    \end{array}
                                                                  \right)
  \right]|\sum^{p-1}_{k=0}\left(
                               \begin{array}{cc}
                                 1 & k \\
                                 0 & p \\
                               \end{array}
                             \right)\\
  &=g|\sum^{p-1}_{k=0}\left(
                               \begin{array}{cc}
                                 1 & k \\
                                 0 & p \\
                               \end{array}
                             \right)-\psi(p)\cdot g|\sum^{p-1}_{k=0}\left(
                               \begin{array}{cc}
                                 p & pk \\
                                 0 & p \\
                               \end{array}
                             \right)\\
  &=\CT^{\Gamma_0(N)}_p(g)-f|\gamma_p-p\cdot\psi(p)\cdot g,
\end{align*}
for any $g\in M_2(\Gamma_0(N),\BC)$; similarly, we have by definition that
\begin{align*}
  \CT^{\Gamma_0(Np)}_p\circ[p]^{-}_\psi(g)&=g|\left[1-p^{-1}\cdot\psi^{-1}(p)\cdot\left(
                                                                    \begin{array}{cc}
                                                                      p & 0 \\
                                                                      0 & 1 \\
                                                                    \end{array}
                                                                  \right)
  \right]|\sum^{p-1}_{k=0}\left(
                               \begin{array}{cc}
                                 1 & k \\
                                 0 & p \\
                               \end{array}
                             \right)\\
  &=g|\sum^{p-1}_{k=0}\left(
                               \begin{array}{cc}
                                 1 & k \\
                                 0 & p \\
                               \end{array}
                             \right)-p^{-1}\cdot\psi^{-1}(p)\cdot g|\sum^{p-1}_{k=0}\left(
                               \begin{array}{cc}
                                 p & pk \\
                                 0 & p \\
                               \end{array}
                             \right)\\
  &=\CT^{\Gamma_0(N)}_p(g)-f|\gamma_p-\psi^{-1}(p)\cdot g
\end{align*}
so the second assertion follows. The proof of the third assertion is similar and we leave it to the reader.
\end{proof}

3.2. It is well known that the number of cusps of $X_0(DC)$ is equal to $\sum_{1\leq d\mid DC}\varphi(d,{DC}/{d})$, so we find that $dim_{\BC}\CE_2(\Gamma_0(DC),\BC)=\sum_{1< d\mid DC}\varphi(d,{DC}/{d})$. Here $\varphi(d,{DC}/{d})$ means applying Euler's $\varphi$-function to the greatest common divisor of $d$ and $DC/d$. We define $\CH(DC)$ to be the set of all triples $(M,L,\psi)$ where $1\leq M,L\mid D$ with $M\neq1$, $D\mid ML\mid DC$ and $\psi$ is a Dirichlet character modulo $(M,L)$. Note that the condition "$M\neq1$" is automatically satisfied if $\psi\neq1$.

\begin{lem}\label{1.2}
$\#\CH(DC)=dim_{\BC}\ \CE_2(\Gamma_0(DC),\BC)$
\end{lem}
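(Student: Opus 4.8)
The plan is to compute both sides of the claimed equality as explicit arithmetic sums and then match them term by term, with the matching organized according to the square-free part of the level. We already know from the discussion in \S3.2 that
\[
\dim_{\BC}\CE_2(\Gamma_0(DC),\BC)=\sum_{1<d\mid DC}\varphi\bigl((d,DC/d)\bigr),
\]
so it suffices to show $\#\CH(DC)$ equals the same sum. The first step is to stratify $\CH(DC)$ by the ordered pair $(M,L)$: since for a fixed $(M,L)$ the number of Dirichlet characters modulo $(M,L)$ is exactly $\varphi((M,L))$, we obtain
\[
\#\CH(DC)=\sum_{\substack{1\le M,L\mid D,\ M\neq1\\ D\mid ML\mid DC}}\varphi\bigl((M,L)\bigr).
\]
So the lemma reduces to the purely combinatorial identity
\[
\sum_{\substack{1\le M,L\mid D,\ M\neq1\\ D\mid ML\mid DC}}\varphi\bigl((M,L)\bigr)=\sum_{1<d\mid DC}\varphi\bigl((d,DC/d)\bigr).
\]

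The second step is to prove this identity by exhibiting a correspondence between the indexing sets. Fix a divisor $d\mid DC$ with $d>1$, and let $e=DC/d$. Because $D$ is square-free, every prime dividing $DC$ divides $D$, and for a prime $p\mid D$ we have $v_p(DC)\in\{1,2\}$ (it is $2$ exactly when $p\mid C$, else $1$). Given such a $d$, I would set $M=\prod_{p\mid d}p$ and define $L$ by requiring $v_p(L)=v_p(DC)-v_p(d)$ for $p\mid D$; one checks $M\mid D$ (as $M$ is square-free and divides $DC$), $L\mid D$, $M\neq1$ since $d>1$, and $D\mid ML\mid DC$ because at each prime $v_p(ML)$ lies between $v_p(D)=1$ and $v_p(DC)$. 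The key point is then that $(d,DC/d)=(d,e)$ and $(M,L)$ have the same radical — indeed a prime $p$ divides $(d,e)$ iff $v_p(d)\ge1$ and $v_p(e)\ge1$, while it divides $(M,L)$ iff $v_p(M)\ge1$ and $v_p(L)\ge1$, i.e. iff $v_p(d)\ge1$ and $v_p(DC)-v_p(d)\ge1$; since $v_p(DC)\le2$ these two conditions coincide. As $\varphi((d,e))$ and $\varphi((M,L))$ depend only on the radical (both gcd's are square-free when they are nontrivial, because $D$ is square-free forces $M,L$ square-free and forces $(d,e)\mid D^2$ with... more carefully: $(M,L)$ is square-free, and $(d,e)$ divides $DC$ but its radical equals that of $(M,L)$, and one must note $\varphi$ of a prime power $p^k$ vs $\varphi(p)$ differ — so I instead match $\varphi((M,L))$ directly with the term and check the fibers have the right size), the contributions agree. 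To make the count exact one verifies that the map $d\mapsto(M,L)$ is surjective onto the index set of $\#\CH(DC)$ and that the fiber over a given $(M,L)$ has cardinality equal to the number of $d$ producing it, times the appropriate $\varphi$-weight; a clean way is to sum over the radical $R=\prod_{p\in S}p$ for $S$ a nonempty subset of $\{p:p\mid D\}$ and count, on each side, how divisors with that radical contribute.

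The third step is therefore a bookkeeping reorganization: group the right-hand sum $\sum_{1<d\mid DC}\varphi((d,DC/d))$ according to the set $S$ of primes dividing $\gcd(d,DC/d)$, and group the left-hand sum $\sum\varphi((M,L))$ according to the set $S$ of primes dividing $\gcd(M,L)$; then show that for each nonempty $S$ the two grouped contributions are equal, and also handle the "$S=\varnothing$" terms (where $\varphi(1)=1$) by a direct count of the divisors $d>1$ with $(d,DC/d)=1$ versus the pairs $(M,L)$ with $(M,L)=1$. Since everything factors over primes $p\mid D$ (as $DC$ has support equal to that of $D$), this is a product computation: at each prime $p$ with $v_p(C)\in\{0,1\}$ one lists the finitely many local exponent choices for $d$ and for $(M,L)$ and checks the generating counts match. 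The main obstacle, and the only place requiring genuine care, is the correct treatment of primes $p\mid C$ (where $v_p(DC)=2$), because there $\gcd(d,DC/d)$ can be $p$ while $d$ ranges over the three exponents $0,1,2$ — one must be sure the $\varphi$-weights and the ranges of $(M,L)$ with $D\mid ML\mid DC$ are tallied consistently; once the local bijection at each such prime is pinned down, the global identity follows by multiplicativity and the lemma is proved.
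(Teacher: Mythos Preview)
Your second step has a real gap: the map $d\mapsto(M,L)$ you write down does not land in the index set of $\CH(DC)$. You take $M=\mathrm{rad}(d)$ and $v_p(L)=v_p(DC)-v_p(d)$, which is simply $L=DC/d$, and then assert $L\mid D$; but with $D=C=6$ and $d=2$ one gets $L=18\nmid 6$. So this is not a well-defined map between the two index sets at all. The hesitation you voice a few lines later (``$\varphi$ of a prime power $p^k$ vs $\varphi(p)$ differ'') is a misdiagnosis: in fact $(d,DC/d)$ is automatically square-free here, because $v_p(DC)\le2$ forces $\min(v_p(d),v_p(DC/d))\le1$, so no $\varphi$-discrepancy ever arises. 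The genuine problem is that your pair $(M,L)$ need not satisfy $L\mid D$.

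Your third step, by contrast, is sound and would finish the argument once written out; it is also a genuinely different route from the paper's. The paper produces an explicit bijection $d\leftrightarrow(M,L)$: first in the case $C=D$ via
\[
M=\sqrt{d\cdot(d,D^2/d)},\qquad L=\sqrt{(D^2/d)\cdot(d,D^2/d)},\qquad d=\bigl(M/(M,L)\bigr)^2\cdot(M,L),
\]
checking $(d,D^2/d)=(M,L)$ directly, and then reduces the general $C$ to this by writing $DC=(D/C)\cdot C^2$ and splitting each divisor accordingly. Your approach instead restores the excluded terms $d=1$ and $M=1$ (each contributes exactly one summand $\varphi(1)=1$, since $M=1$ forces $L=D$), observes that both resulting full sums factor over the primes of $D$, and checks the local identity: at $p\nmid C$ both local sums equal $2$, at $p\mid C$ both equal $p+1$. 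This multiplicative bookkeeping is shorter and avoids constructing any bijection; what it gives up is the explicit correspondence between divisors and pairs, which the paper's argument supplies and which is conceptually informative even if not strictly needed for the dimension count.
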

\begin{proof}
By the above remark, we only need to prove that $\#\CH(DC)=\sum_{1< d\mid DC}\varphi(d,\frac{DC}{d})$. We will first prove this when $C=D$. For any positive divisor $d$ of $D^2$, we can associate the following two positive integers
\[M:=\sqrt{d\cdot(d,\frac{D^2}{d})},\ L:=\sqrt{\frac{D^2}{d}\cdot(d,\frac{D^2}{d})}\]
such that $1\leq M,L\mid D$ and $D\mid ML\mid D^2$. Conversely, to any pair of integers $M$ and $L$ with $1\leq M,L\mid D$ and $D\mid ML\mid D^2$, we can associate a positive divisor $d$ of $D$ as
\[d:=\left[\frac{M}{(M,L)}\right]^2\cdot(M,L)\]
It is easy to see that the above establishes a bijection between $\{d:1\leq d\mid D^2\}$ and the set of all pair of integers $M$ and $L$ with $1\leq M,L\mid D$ and $D\mid ML\mid D^2$. Moreover, under this bijection, the divisor $1$ of $D^2$ corresponds to the pair $M=1$ and $L=D$, and we have $(d,{D^2}/{d})=(M,L)$ if $d$ corresponds to $M$ and $L$. It follows that there is a bijection between $\{(d,\psi)|1<d\mid D^2,\psi:({\BZ}/{(d,{D^2}/{d})\cdot\BZ})^\times\rightarrow\BC^\times\}$ and $\CH(D^2)$ which proves the lemma in this situation.

In general, since $DC=\frac{D}{C}\cdot C^2$, any positive divisor $d$ of $DC$ can be uniquely decomposed as $d=d_0\cdot d'$ with $1\leq d_0\mid\frac{D}{C}$ and $1\leq d'\mid C^2$. If such a positive divisor $d'$ of $C^2$ corresponds to a pair of integer $m$ and $\ell$ with $1\leq m,\ell\mid C$ and $C\mid m\ell\mid C^2$ as above, then we can associate with $d$ the pair of integers $M=d_0\cdot m$ and $\frac{DC}{d_0}\cdot\ell$ which satisfies $1\leq M,L\mid D$ and $D\mid ML\mid DC$. This establishes a bijection between $\{d:1\leq d\mid DC\}$ and the set of all pair of integers $M$ and $L$ with $1\leq M,L\mid D$ and $D\mid ML\mid DC$. Moreover, we have $1\mid D^2$ corresponds to the pair $M=1$ and $L=D$, and $(d,\frac{DC}{d})=(M,L)$ if $d$ corresponds to $M$ and $L$. It follows that there is a bijection between $\{(d,\psi):1<d\mid D^2,\psi:\left(\BZ/(d,{DC}/{d})\cdot\BZ\right)^\times\rightarrow\BC^\times\}$ and $\CH(DC)$ which completes the proof the lemma.
\end{proof}

\begin{defn}\label{key definition}
For any Dirichlet character $\psi$ of conductor $f_\psi=f$, let
\begin{align*}
  E_\psi:=-\frac{1}{2g(\psi)}\sum_{a\in(\BZ/f\BZ)^\times}\sum_{b\in(\BZ/f^2\BZ)^\times}\psi(a)\cdot\psi(b)\cdot\phi_{(\frac{a}{f},\frac{b}{f^2})}.
\end{align*}
Then we define
\[E_{M,L,\psi}:=[\frac{L}{f}]^-_\psi\circ[\frac{M}{f}]^+_\psi(E_\psi),\]
for any $(M,L,\psi)\in\CH(DC)$, where $g(\psi)$ is the Gauss sum of $\psi$.
\end{defn}

From Eq.(2.1), it is easy to see that
\begin{align*}
E_\psi&=-\frac{\delta_\psi}{4\pi i(z-\overline{z})}-\frac{1}{4g(\psi)}\sum_{x\in({\BZ}/{f\BZ})^\times}\sum_{y\in({\BZ}/{f^2\BZ})^\times}\psi(x)\cdot\psi(y)\cdot B_2(\frac{x}{f})\\
&+\frac{1}{g(\psi)}\sum_{x\in({\BZ}/{f\BZ})^\times}\sum_{y\in({\BZ}/{f^2\BZ})^\times}\psi(x)\cdot\psi(y)\cdot P_{(\frac{x}{f},\frac{y}{f^2})},
\end{align*}
where $\delta_\psi$ is equal to $1$ or $0$ according to $\psi$ is trivial or not. Since we have by Eq.(2.2) that
\begin{align*}
\sum_{x\in({\BZ}/{f\BZ})^\times}\sum_{y\in({\BZ}/{f^2\BZ})^\times}\psi(x)\cdot\psi(y)\cdot P_{(\frac{x}{f},\frac{y}{f^2})}&=\sum^{\infty}_{k,m=1}\frac{k\psi(k)}{f}\left(\sum_{y\in({\BZ}/{f^2\BZ})^{\times}}\psi(y)e^{2\pi i\frac{my}{f^2}}\right)e^{2\pi i\frac{mk}{f}z}\\
&=\sum^{\infty}_{k,m=1}\frac{k\psi(k)}{f}\left(\sum_{y\in({\BZ}/{f^2\BZ})^{\times}}\psi(y)e^{2\pi i\frac{my}{f}}\right)e^{2\pi imkz}\\
&=g(\psi)\sum^{\infty}_{k,m=1}k\cdot\psi(k)\cdot\psi^{-1}(m)\cdot e^{2\pi imkz},
\end{align*}
with $\psi(n)$ defined to be $0$ when $(n,f)\neq1$ as usual, we find thus
\begin{align}
  E_\psi=-\frac{\delta_\psi}{4\pi i(z-\overline{z})}+a_0(E_\psi;[\infty])+\sum^{\infty}_{n=1}\sigma_{\psi}(n)\cdot \fq^n,
\end{align}
with
\begin{align}
a_0(E_\psi;[\infty])=
\begin{cases}
-\frac{1}{24} &,\text{if}\ \psi=1\\
\ \ 0 &,\text{otherwise}\
\end{cases}
\end{align}
and
\begin{align}
\sigma_\psi(n):=
\sum_{1\leq d\mid n}d\cdot\psi(d)\cdot\psi^{-1}({n}/{d})
\end{align}
In particular, we find that $a_1(E_\psi;[\infty])=1$, which means $E_\psi$ is normalized. Because $[M]^+(\frac{1}{z-\overline{z}})=0$ for any $M>1$ as we have see in Remark~\ref{zero}, it follows from the definition and Eq.(3.1) that $E_{M,L,\psi}$ is always holomorphic and hence belongs to $\CE_2(\Gamma_0(DC),\BC)$.

\begin{lem}\label{normalized}
$E_{M,L,\psi}$ is normalized for any $(M,L,\psi)\in\CH(DC)$.
\end{lem}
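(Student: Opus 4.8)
The plan is to compute the first Fourier coefficient $a_1(E_{M,L,\psi};[\infty])$ directly from the definition $E_{M,L,\psi}=[L/f]^-_\psi\circ[M/f]^+_\psi(E_\psi)$ and the normalization $a_1(E_\psi;[\infty])=1$ established in Eq.~(3.4)--(3.6). First I would recall the effect of the operator $\gamma_p$ on $q$-expansions: since $\gamma_p(g)(z)=p\cdot g(pz)$ for $g\in\CM_2$, if $g=\sum_{n\ge0}a_n\fq^n$ then $\gamma_p(g)=p\sum_{n\ge0}a_n\fq^{pn}$, so $\gamma_p$ raises the ``level'' of every monomial and in particular kills the $\fq^1$-term whenever $p>1$. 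Consequently, for any prime $p\nmid f_\psi$ the operators $[p]^+_\psi=1-\psi(p)\gamma_p$ and $[p]^-_\psi=1-p^{-1}\psi^{-1}(p)\gamma_p$ act on the coefficient $a_1$ of any form in $\CM_2$ as the identity: $a_1([p]^{\pm}_\psi g;[\infty])=a_1(g;[\infty])$.

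Now $M/f$ and $L/f$ are positive square-free integers prime to $f=f_\psi$ (this is part of the requirement that $(M,L,\psi)\in\CH(DC)$ with $\psi$ of conductor $f$, together with $D$ square-free, so that every prime factor of $M/f$ and of $L/f$ is prime to $f$). Hence $[M/f]^+_\psi$ and $[L/f]^-_\psi$ are compositions of the elementary operators $[p]^{\pm}_\psi$ over the primes $p\mid M/f$ (resp. $p\mid L/f$), and each such elementary operator preserves $a_1$ by the previous paragraph. Composing, we get
\begin{align*}
a_1(E_{M,L,\psi};[\infty])=a_1\bigl([L/f]^-_\psi\circ[M/f]^+_\psi(E_\psi);[\infty]\bigr)=a_1(E_\psi;[\infty])=1,
\end{align*}
which is exactly the assertion that $E_{M,L,\psi}$ is normalized.

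There is no real obstacle here; the lemma is a bookkeeping consequence of Definition~\ref{key definition}, Remark~\ref{zero}'s viewpoint that the $[M]^{\pm}_\psi$ act monomial-by-monomial on Fourier expansions, and the already-computed value $a_1(E_\psi;[\infty])=1$. The only point requiring a line of care is to observe that every prime dividing $M/f$ or $L/f$ is coprime to $f$, so that all the operators $[p]^{\pm}_\psi$ are defined; this is immediate since $f\mid M$, $f\mid L$, $M\mid D$, $L\mid D$ and $D$ is square-free, forcing $M/f$ and $L/f$ to be square-free and coprime to $f$.
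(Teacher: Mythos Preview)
Your proof is correct and follows essentially the same approach as the paper: both observe that $\gamma_p$ shifts every term $\fq^n$ to $\fq^{pn}$, so $a_1(g|\gamma_p;[\infty])=0$ and hence each operator $[p]^{\pm}_\psi$ preserves $a_1$, whence the composite $[L/f]^-_\psi\circ[M/f]^+_\psi$ preserves $a_1(E_\psi;[\infty])=1$. Your version is slightly more explicit in verifying that the primes dividing $M/f$ and $L/f$ are coprime to $f$, but the argument is the same.
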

\begin{proof}
Because $g|\gamma_p=\sum^{\infty}_{n=0}(p a_n)\cdot {\fq}^{p n}$ for any prime $p$ and function $g$ of the form $\sum^{\infty}_{n=0}a_n\cdot {\fq}^n$, we find that $a_1(g|\gamma_p;[\infty])=0$ and hence $a_1([p]^{\pm}_\psi(g);[\infty])=a_1(g;[\infty])$. By the above discussion, $E_\psi$ is normalized, so the assertion follows.
\end{proof}

\begin{lem}\label{lem2}
For any non-trivial Dirichlet character $\psi$ of conductor $f_\psi=f$, we have that
\[\CT^{\Gamma_0(f^2)}_{\ell}(E_{\psi})=
\begin{cases}
\left(\psi^{-1}(\ell)+\ell\cdot\psi(\ell)\right)\cdot E_\psi &,\text{if}\ \ell\nmid f\\
0 &,\text{if}\ \ell\mid f.\\
\end{cases}
\]
\end{lem}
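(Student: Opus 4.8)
The plan is to check the identity at the level of $\fq$-expansions at the cusp $\infty$ and then use that a weight-two modular form on $X_0(f^2)$ is determined by its $\fq$-expansion at $\infty$. Since $\psi$ is non-trivial we have $\delta_\psi=0$ and $a_0(E_\psi;[\infty])=0$, so by the Fourier expansion recorded in Eq.~(3.1)--(3.3) we get $E_\psi=\sum_{n\geq1}\sigma_\psi(n)\fq^n$ with $\sigma_\psi(n)=\sum_{1\leq d\mid n}d\cdot\psi(d)\cdot\psi^{-1}(n/d)$. The first point to isolate is that $\sigma_\psi$ is a multiplicative arithmetic function, being the Dirichlet convolution of the completely multiplicative functions $d\mapsto\psi^{-1}(d)$ and $d\mapsto d\cdot\psi(d)$; its local Euler factor at a prime $\ell$ is $(1-\psi^{-1}(\ell)X)^{-1}(1-\ell\cdot\psi(\ell)X)^{-1}$, which is equivalent to the relations $\sigma_\psi(\ell)=\psi^{-1}(\ell)+\ell\cdot\psi(\ell)$ and $\sigma_\psi(\ell^{a+1})=(\psi^{-1}(\ell)+\ell\cdot\psi(\ell))\,\sigma_\psi(\ell^a)-\ell\cdot\sigma_\psi(\ell^{a-1})$ for $a\geq1$. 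In particular, when $\ell\mid f$ one has $\psi(\ell)=\psi^{-1}(\ell)=0$, hence $\sigma_\psi(\ell^a)=0$ for all $a\geq1$, and therefore $\sigma_\psi(\ell n)=0$ for all $n\geq1$ by multiplicativity.

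Next I would recall the action of $\CT^{\Gamma_0(f^2)}_\ell$ on $\fq$-expansions. Using the coset representatives already written down in the proof of Lemma~\ref{1.1} together with the normalization of the slash operator, for $g=\sum_{n\geq0}a_n\fq^n$ one obtains $\CT^{\Gamma_0(f^2)}_\ell(g)=\sum_{n\geq0}(a_{\ell n}+\ell\cdot a_{n/\ell})\fq^n$ when $\ell\nmid f$ (with the convention $a_{n/\ell}=0$ if $\ell\nmid n$), and $\CT^{\Gamma_0(f^2)}_\ell(g)=\sum_{n\geq0}a_{\ell n}\fq^n$ when $\ell\mid f$. Applying this to $g=E_\psi$: if $\ell\mid f$, every coefficient $\sigma_\psi(\ell n)$ vanishes by the first paragraph, so $\CT^{\Gamma_0(f^2)}_\ell(E_\psi)=0$; if $\ell\nmid f$, writing $n=\ell^a m$ with $\ell\nmid m$ and factoring each $\sigma_\psi$ through multiplicativity reduces the required identity
\[
\sigma_\psi(\ell n)+\ell\cdot\sigma_\psi(n/\ell)=(\psi^{-1}(\ell)+\ell\cdot\psi(\ell))\cdot\sigma_\psi(n)
\]
to precisely the two local relations of the first paragraph (the case $a=0$ to $\sigma_\psi(\ell)=\psi^{-1}(\ell)+\ell\cdot\psi(\ell)$, the case $a\geq1$ to the three-term recursion). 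Hence $\CT^{\Gamma_0(f^2)}_\ell(E_\psi)$ and $(\psi^{-1}(\ell)+\ell\cdot\psi(\ell))\cdot E_\psi$ have the same $\fq$-expansion at $\infty$.

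To conclude, both forms lie in $\CM_2(\Gamma_0(f^2),\BC)$: indeed $E_\psi=E_{f,f,\psi}$ in the notation of Definition~\ref{key definition} (take $D=C=f$, so that $[M/f]^+_\psi=[L/f]^-_\psi=\id$), hence $E_\psi\in\CE_2(\Gamma_0(f^2),\BC)$, and $\CT^{\Gamma_0(f^2)}_\ell$ preserves $\CM_2(\Gamma_0(f^2),\BC)$ by definition. Since a weight-two modular form on the connected curve $X_0(f^2)$ is determined by its $\fq$-expansion at $\infty$, the two forms coincide, which is the assertion. I expect the only step needing genuine care to be the bookkeeping of the normalization constants in the $\fq$-expansion formula for $\CT_\ell$ — in particular the weight-two factor $\ell$ (rather than $1$) multiplying $a_{n/\ell}$ in the case $\ell\nmid f$ — with everything else being a formal consequence of the multiplicativity of $\sigma_\psi$. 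A more hands-on alternative, closer to the way $E_\psi$ is built, would be to propagate the distribution law~(2.4) and the transformation rule~(2.3) for the functions $\phi_{\underline{x}}$ through the coset decomposition of $\CT_\ell$, but the $\fq$-expansion route is shorter and cleaner.
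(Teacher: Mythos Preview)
Your argument is correct, and it takes a genuinely different route from the paper. The paper works directly with the building blocks $\phi_{\underline{x}}$: for $\ell\nmid f$ it quotes a formula from \cite{St} giving $\CT^{\Gamma(f^2)}_\ell(\phi_{(x/f,y/f^2)})=\phi_{(x/f,\ell y/f^2)}+\ell\,\phi_{(\ell' x/f,y/f^2)}$ and then reads off the eigenvalue from the averaging over $\psi$; for $\ell\mid f$ it rewrites $E_\psi$ via the distribution law as a sum of $\phi_{(x/f,y/f)}|\left(\begin{smallmatrix}f&0\\0&1\end{smallmatrix}\right)$, pushes the coset representatives of $\CT_\ell$ through this matrix, and shows the resulting character sum vanishes because $\psi$ is primitive. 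Your approach bypasses all of that structure by passing to the known $\fq$-expansion $E_\psi=\sum_{n\geq1}\sigma_\psi(n)\fq^n$, using multiplicativity of $\sigma_\psi$ and the standard $\fq$-expansion formula for $\CT_\ell$, and then invoking the $\fq$-expansion principle. Your route is shorter and uses only facts already recorded in the paper (Eqs.~(3.1)--(3.3) and the coset description in Lemma~\ref{1.1}); the paper's route stays closer to the definition of $E_\psi$ and does not need to appeal to the $\fq$-expansion principle or to the fact (which you justify via $E_\psi=E_{f,f,\psi}$) that $E_\psi$ is a holomorphic form of level $\Gamma_0(f^2)$. Either way the content is the same: the vanishing for $\ell\mid f$ comes down to primitivity of $\psi$, expressed in your argument as $\psi(\ell)=0$ and in the paper's as the vanishing of a nontrivial character sum.
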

\begin{proof}
By Proposition 2.4.7 of \cite{St}, we have that
\[\CT^{\Gamma(f^2)}_\ell\left(\phi_{(\frac{x}{f},\frac{y}{f^2})}\right)=\phi_{(\frac{x}{f},\frac{\ell y}{f^2})}+\ell\cdot\phi_{(\frac{\ell'x}{f},\frac{ y}{f^2})}\]
for any prime $\ell\nmid f$, where $\ell'$ is an integer such that $\ell\ell'\equiv1\pmod{f}$ and $\CT^{\Gamma(f^2)}_\ell$ is the $\ell$-th Hecke operator of level $\Gamma(f^2)$. It follows that
\[\CT^{\Gamma_0(f^2)}_{\ell}(E_{\psi})=\left(\psi^{-1}(\ell)+\ell\cdot\psi(\ell)\right)\cdot E_\psi,\]
for any prime $\ell\nmid f$. On the other hand, since
\[E_\psi=-\frac{1}{2g(\psi)}\sum_{x,y\in({\BZ}/{f\BZ})^\times}\psi(x)\cdot\psi(y)\cdot\phi_{(\frac{x}{f},\frac{y}{f})}|\left(
                                                                                                                             \begin{array}{cc}
                                                                                                                               f & 0 \\
                                                                                                                               0 & 1 \\
                                                                                                                             \end{array}
                                                                                                                           \right)
\]
by the distribution law, we find that
\begin{align*}
  \CT^{\Gamma_0(f^2)}_\ell(E_\psi)&=-\frac{1}{2g(\psi)}\sum_{x,y\in({\BZ}/{f\BZ})^\times}\psi(x)\cdot\psi(y)\cdot\phi_{(\frac{x}{f},\frac{y}{f})}|\left(
                                                                                                                             \begin{array}{cc}
                                                                                                                               f & 0 \\
                                                                                                                               0 & 1 \\
                                                                                                                             \end{array}
                                                                                                                           \right)\sum^{\ell-1}_{k=0}\left(
                                                                                                                                                       \begin{array}{cc}
                                                                                                                                                         1 & k \\
                                                                                                                                                          0& \ell \\
                                                                                                                                                       \end{array}
                                                                                                                                                     \right)\\
  &=-\frac{1}{2g(\psi)}\sum_{x,y\in({\BZ}/{f\BZ})^\times}\psi(x)\cdot\psi(y)\cdot\phi_{(\frac{x}{f},\frac{y}{f})}|\sum^{\ell-1}_{k=0}\left(
                                                                                                                                                       \begin{array}{cc}
                                                                                                                                                         1 & \frac{f}{\ell}k \\
                                                                                                                                                          0& 1 \\
                                                                                                                                                       \end{array}
                                                                                                                                                     \right)\left(
                                                                                                                             \begin{array}{cc}
                                                                                                                               f & 0 \\
                                                                                                                               0 & \ell \\
                                                                                                                             \end{array}
                                                                                                                           \right)\\
  &=-\frac{1}{2g(\psi)}\sum_{x,y\in({\BZ}/{f\BZ})^\times}\psi(x)\cdot\psi(y)\sum^{\ell}_{k=0}\phi_{(\frac{x}{f},\frac{y}{f}+\frac{xk}{\ell})}|\left(
                                                                                                                                                     \begin{array}{cc}
                                                                                                                                                       f & 0 \\
                                                                                                                                                       0 & \ell \\
                                                                                                                                                     \end{array}
                                                                                                                                                   \right)=0,
\end{align*}
for any prime $\ell\mid f$, with the last equality holds because $\psi$ is primitive of conductor $f$, and hence complete the proof of the lemma.
\end{proof}

\begin{prop}\label{eigen}
Notations are as above, then we have that
\begin{enumerate}
  \item $E_{M,L,\psi}$ is normalized for any $(M,L,\psi)\in\CH(DC)$, that is to say, $a_1(E_{M,L,\psi};[\infty])=1$ for any $(M,L,\psi)\in\CH(DC)$. In particular, all these Eisenstein series are non-zero;
  \item For any $(M,L,\psi)\in\CH(DC)$, the Hecke operators act on $E_{M,L,\psi}$ as
\[\CT^{\Gamma_0(DC)}_{\ell}(E_{M,L,\psi})=
\begin{cases}
\left(\psi^{-1}(\ell)+\ell\cdot\psi(\ell)\right)\cdot E_{M,L,\psi} &,\text{if}\ \ell\nmid D\\
\psi^{-1}(\ell)\cdot E_{M,L,\psi} &,\text{if}\ \ell\mid \frac{M}{(M,L)}\\
\ell\cdot\psi(\ell)\cdot E_{M,L,\psi} &,\text{if}\ \ell\mid\frac{L}{(M,L)}\\
0 &,\text{if}\ \ell\mid(M,L)
\end{cases}
\]
  \item $\CE_2(\Gamma_0(DC),\BC)=\bigoplus_{(M,L,\psi)\in\CH(DC)}\BC\cdot E_{M,L,\psi}$.
\end{enumerate}
\end{prop}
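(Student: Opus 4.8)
\emph{Part (1)} is exactly Lemma~\ref{normalized}; the only consequence I will use from it is that every $E_{M,L,\psi}$ is a nonzero element of $\CE_2(\Gamma_0(DC),\BC)$. The plan is therefore to prove (2) first and to deduce (3) from (1), (2) and the count in Lemma~\ref{1.2}. For (2), I would transport the Hecke eigenvalues of $E_\psi$ computed in Lemma~\ref{lem2} (at level $\Gamma_0(f_\psi^2)$) along the chain of operators $[\tfrac{M}{f_\psi}]^{+}_\psi,[\tfrac{L}{f_\psi}]^{-}_\psi$ defining $E_{M,L,\psi}$, using Lemma~\ref{1.1}. Two remarks organise this. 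First, each $[p]^{\pm}_\psi$ is a polynomial in $\gamma_p$ and the operators $\gamma_p$ for distinct $p$ commute, so all the $[p]^{\pm}_\psi$ commute with one another and the defining factors of $E_{M,L,\psi}$ may be reordered freely; in particular, for a fixed prime $\ell\mid D$ one may peel off the factor(s) attached to $\ell$ and write $E_{M,L,\psi}=[\ell]^{+}_\psi(F)$, $[\ell]^{-}_\psi(F)$ or $[\ell]^{-}_\psi\bigl([\ell]^{+}_\psi(F)\bigr)$ according as $\ell\mid\tfrac{M}{(M,L)}$, $\ell\mid\tfrac{L}{(M,L)}$, or $\ell\mid(M,L)$ with $\ell\nmid f_\psi$, with $F$ obtained from $E_\psi$ by the remaining operators (all at primes $\neq\ell$). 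Secondly, since $D$ is square-free with $M,L\mid D$ and $D\mid ML$, the form $E_{M,L,\psi}$ lies in $\CE_2(\Gamma_0(ML),\BC)$ with $ML\mid DC$, and (as $C\mid D$) $ML$ has the same prime divisors as $DC$; hence $\CT^{\Gamma_0(DC)}_\ell$ and $\CT^{\Gamma_0(ML)}_\ell$ act identically on $E_{M,L,\psi}$, so all computations may be done at level $\Gamma_0(ML)$ and the intermediate levels along the chain.

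With these reductions the four cases of (2) are short. If $\ell\nmid D$ (so $\ell\nmid f_\psi$ and $\ell$ differs from every prime in the chain), $\CT_\ell$ commutes with every $[p]^{\pm}_\psi$ by Lemma~\ref{1.1}(1), so the eigenvalue $\psi^{-1}(\ell)+\ell\psi(\ell)$ is inherited from Lemma~\ref{lem2}; the same argument for $\ell\mid f_\psi$ gives eigenvalue $0$. If $\ell\mid\tfrac{M}{(M,L)}$, then $F$ has level prime to $\ell$, so $\CT_\ell F=(\psi^{-1}(\ell)+\ell\psi(\ell))F$ by the previous case, while Lemma~\ref{1.1}(2) gives $\CT_\ell\circ[\ell]^{+}_\psi=\CT_\ell-\gamma_\ell-\ell\psi(\ell)$; combining these with $[\ell]^{+}_\psi F=F-\psi(\ell)\gamma_\ell F$ and $\psi(\ell)\psi^{-1}(\ell)=1$ yields $\CT_\ell(E_{M,L,\psi})=\psi^{-1}(\ell)E_{M,L,\psi}$, and the symmetric computation with $[\ell]^{-}_\psi$ and the second identity of Lemma~\ref{1.1}(2) yields $\ell\psi(\ell)E_{M,L,\psi}$ when $\ell\mid\tfrac{L}{(M,L)}$. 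If $\ell\mid(M,L)$ and $\ell\nmid f_\psi$, peel off $[\ell]^{-}_\psi$ using Lemma~\ref{1.1}(3) (the intervening level is divisible by $\ell$) and then $[\ell]^{+}_\psi$ using Lemma~\ref{1.1}(2); the two $\gamma_\ell F$-terms cancel and the eigenvalue is $0$. This is the stated list.

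For \emph{Part (3)}, Lemma~\ref{1.2} gives $\#\CH(DC)=\dim_{\BC}\CE_2(\Gamma_0(DC),\BC)$, and by (1) each $E_{M,L,\psi}$ is a nonzero element of $\CE_2(\Gamma_0(DC),\BC)$, so it is enough to show that the $E_{M,L,\psi}$ are linearly independent. By (2) they are simultaneous eigenvectors of the commutative algebra $\CT_{\Gamma_0(DC)}$ (generated by the $\CT_\ell$ via the Hecke relations), and eigenvectors with pairwise distinct eigenvalue systems are linearly independent, so I only need distinct triples to give distinct systems. The eigenvalue $\lambda(\ell)=\psi^{-1}(\ell)+\ell\psi(\ell)$ at a prime $\ell\nmid D$ recovers the Satake parameter $\psi^{-1}(\ell)$ as the unique root of $X^{2}-\lambda(\ell)X+\ell$ of absolute value $1$ (the other root has absolute value $\ell>1$), hence recovers $\psi(\ell)$ for all $\ell\nmid D$ and so recovers $\psi$; then for $\ell\mid D$ the eigenvalue is $0$, $\psi^{-1}(\ell)$ or $\ell\psi(\ell)$ (of absolute values $0$, $1$, $\ell$, hence distinct) according as $\ell\mid(M,L)$, $\ell\mid\tfrac{M}{(M,L)}$ or $\ell\mid\tfrac{L}{(M,L)}$, and since $D$ is square-free with $D\mid ML$ exactly one of these holds for each $\ell\mid D$, so this reads off $(v_\ell(M),v_\ell(L))$ and recovers $M$ and $L$. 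Thus distinct triples give distinct eigenvalue systems, and (3) follows.

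The main obstacle is the bookkeeping in (2): keeping track of the levels $f_\psi^{2},\dots,ML$ as the operators are applied, and in particular the case $\ell\mid(M,L)$, where one must use both parts (2) and (3) of Lemma~\ref{1.1} and check the cancellation of the two $\gamma_\ell F$-terms. In (3), the only input beyond formal manipulation is that a Dirichlet character is determined by its values at the primes not dividing $D$, a standard consequence of Dirichlet's theorem on primes in arithmetic progressions.
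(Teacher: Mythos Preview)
Your proof is correct and follows the same approach as the paper's: Part (1) via Lemma~\ref{normalized}, Part (2) by transporting the eigenvalues of $E_\psi$ from Lemma~\ref{lem2} through the chain of $[p]^{\pm}_\psi$ using Lemma~\ref{1.1}, and Part (3) from the dimension count in Lemma~\ref{1.2} together with linear independence of eigenvectors. Your treatment of Part (3) is in fact more complete than the paper's, which simply asserts that the eigenvalue systems are pairwise distinct; you actually verify this by recovering $\psi$ from the unramified eigenvalues and then $(M,L)$ from the ramified ones.
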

\begin{proof}
We have already proved the first assertion in Proposition~\ref{normalized}. Lemma~\ref{1.2} implies that the number of the Eisenstein series that we introduced equals the dimension of the $\BC$-vector space $\CE_2(\Gamma_0(DC),\BC)$. Thus, to prove the third assertion, it is enough to show that all these Eisenstein series are linearly independent over $\BC$. So we only need to prove the second assertion, which implies that the Eisenstein series have different eigenvalues and hence are linearly independent.

If $\ell$ is a prime not dividing $D$, then we find by (1) of Lemma~\ref{1.1} and Lemma~\ref{lem2} that
\begin{align*}
  \CT^{\Gamma_0(DC)}_\ell&=[\frac{L}{f}]^-_\psi\circ[\frac{M}{f}]^+_\psi\circ\CT^{\Gamma_0(f^2)}_\ell(E_\psi)\\
  &=\left(\psi^{-1}(\ell)+\ell\cdot\psi(\ell)\right)\cdot E_{M,L,\psi}
\end{align*}

If $\ell$ is a prime divisor of$\frac{M}{(M,L)}$, then we have by (2) of Lemma~\ref{1.1} that
\begin{align*}
 \CT^{\Gamma_0(DC)}_\ell(E_{M,L,\psi})&=[\frac{L}{f}]^-_\psi\circ[\frac{M}{f\ell}]^+_\psi\circ\CT^{\Gamma_0(f^2\ell)}_\ell\circ[\ell]^+_\psi(E_\psi)\\
 &=[\frac{L}{f}]^-_\psi\circ[\frac{M}{f\ell}]^+_\psi\circ(\psi^{-1}(\ell)-\gamma_\ell)(E_\psi)\\
 &=\psi^{-1}(\ell)\cdot E_{M,L,\psi}
\end{align*}

The proofs for those primes $\ell\mid\frac{L}{(M,L)}$ and $\ell\mid\frac{(M,L)}{f}$ are similar to the above, so we omit it here.

Finally, if $\ell\mid f$, then we have that
\[\CT^{\Gamma_0(DC)}_\ell(E_{M,L,\psi})=[\frac{L}{f}]^-_\psi\circ[\frac{M}{f}]^+_\psi\circ\CT^{\Gamma_0(f^2)}_\ell(E_\psi)=0\]
and hence complete the proof.
\end{proof}

\section{The quadratic subgroups of $C_0(DC)$}
\textbf{4.1. }In this section, we study the cuspidal subgroups associated to those $E_{M,L,\psi}$ with $\psi$ a \emph{quadratic character}. We begin with some preliminaries.

\begin{lem}\label{representives}
If we take $r$ to be a positive divisor of $\frac{D}{C}$, and let $s,t$ two positive divisors of $C$ satisfying $(s,t)=1$ and let $x$ runs over a set of representatives of $(\BZ/t\BZ)^\times$ which are prime to $D$, then $\{[\frac{rs^2tx}{DC}]\}$ is a full set of representatives for the cusps of $X_0(DC)$.
\end{lem}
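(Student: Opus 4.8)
The plan is to identify the map $(r,s,t,x)\mapsto[\frac{rs^2tx}{DC}]$ with the standard parametrization of $cusp(\Gamma_0(DC))$ and verify it is a bijection. Recall (see \S1.3 of \cite{St}) that a complete set of representatives for $cusp(\Gamma_0(N))$ is given by the fractions $\frac{a}{c}$ with $c$ a positive divisor of $N$, $(a,c)=1$ and $a$ running over representatives of $(\BZ/(c,N/c)\BZ)^\times$, and that two such fractions $\frac{a_1}{c_1}$, $\frac{a_2}{c_2}$ with $c_i\mid N$ represent the same cusp if and only if $c_1=c_2$ and $a_1\equiv a_2\pmod{(c_1,N/c_1)}$; in particular $\#cusp(\Gamma_0(N))=\sum_{c\mid N}\varphi\big((c,N/c)\big)$. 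Now specialize to $N=DC$: since $D$ is square-free and $C\mid D$, a prime $p$ satisfies $v_p(DC)=1$ if $p\mid D/C$ and $v_p(DC)=2$ if $p\mid C$. Sorting divisors of $DC$ prime by prime then gives $\#cusp(\Gamma_0(DC))=2^{\nu(D/C)}\mu(C)$ (here $2^{\nu(D/C)}$ is the number of divisors of $D/C$), and sorting the primes of $C$ according to whether they divide $s$, $t$ or neither shows the index set $\{(r,s,t,x)\}$ of the lemma also has $2^{\nu(D/C)}\mu(C)$ elements. Hence it suffices to prove the map is injective (equivalently, surjective).

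First I would reduce each $\frac{rs^2tx}{DC}$ to lowest terms. Since $s,t\mid C$ with $(s,t)=1$ we have $s^2t\mid C^2$, and $r\mid D/C$ is prime to $C$, so $rs^2t\mid DC$; as $x$ is prime to $D$ it is prime to $DC$, whence $(rs^2tx,DC)=(rs^2t,DC)=rs^2t$ and $\frac{rs^2tx}{DC}=\frac{x}{c}$ in lowest terms with $c:=DC/(rs^2t)\mid DC$ and $(x,c)=1$. A prime-by-prime check using the exponents above gives $(c,DC/c)=t$: a prime dividing $D/C$ never contributes, while a prime $p\mid C$ contributes iff $v_p(DC/c)=v_p(rs^2t)=1$, i.e. iff $p\mid t$. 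Thus, in the parametrization recalled above, $[\frac{rs^2tx}{DC}]$ is exactly the cusp attached to the pair $(c,\,x\bmod t)$, with $x\bmod t\in(\BZ/t\BZ)^\times$ since $x$ is prime to $D$ and $t\mid D$.

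The core of the argument is then the assignment $(r,s,t)\mapsto rs^2t$, equivalently $(r,s,t)\mapsto c=DC/(rs^2t)$: this is a bijection from the admissible triples onto the divisors of $DC$. Indeed the exponent of a prime $p$ in $rs^2t$ is $v_p(r)\in\{0,1\}$ when $p\mid D/C$ and is $2v_p(s)+v_p(t)$ when $p\mid C$, where $(v_p(s),v_p(t))\in\{(0,0),(1,0),(0,1)\}$ by $(s,t)=1$; so a prime of $D/C$ contributes exponent $0$ or $1$ and a prime of $C$ contributes $0$, $2$ or $1$ according as it divides neither of $s,t$, divides $s$, or divides $t$ — precisely the exponent patterns of a divisor of $DC$, each realized exactly once. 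Injectivity of the map of the lemma follows: if $[\frac{rs^2tx}{DC}]=[\frac{r's'^2t'x'}{DC}]$ then the reduced denominators coincide, so $(r,s,t)=(r',s',t')$, hence $t=t'$ and $x\equiv x'\pmod t$, which forces $x=x'$ as both are chosen from a fixed set of representatives of $(\BZ/t\BZ)^\times$. With the cardinality count this makes the map a bijection. (Alternatively one checks surjectivity directly: given $c\mid DC$ and $a\in(\BZ/(c,DC/c)\BZ)^\times$, set $r=\prod_{p\mid D/C}p^{\,1-v_p(c)}$, $s=\prod_{p\mid C,\,v_p(c)=0}p$, $t=\prod_{p\mid C,\,v_p(c)=1}p$; writing $D=tD'$ with $(t,D')=1$, which is possible as $D$ is square-free, the Chinese Remainder Theorem produces $x$ prime to $D$ with $x\equiv a\pmod t$, and then $[\frac{rs^2tx}{DC}]$ is the cusp attached to $(c,a\bmod t)$.)

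I do not expect a serious obstacle here: the content is bookkeeping once the standard description of $cusp(\Gamma_0(N))$ and of the equivalence among the fractions $a/c$ is in hand. The one step deserving care is the reduction to lowest terms — one must confirm that the new denominator $c=DC/(rs^2t)$ is again a divisor of $DC$ (this uses square-freeness of $D$, so that $s^2t\mid C^2$ for coprime $s,t\mid C$) and that choosing $x$ prime to $D$, rather than merely prime to $t$, guarantees no cancellation against the numerator.
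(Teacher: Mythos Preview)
Your proof is correct. Both you and the paper follow the same overall skeleton---show that divisors of $DC$ are exactly the products $rs^2t$, observe that $(rs^2t,\,DC/(rs^2t))=t$, match the cardinality against the known cusp count $\sum_{d\mid DC}\varphi((d,DC/d))$, and then argue injectivity---but the injectivity arguments diverge. You reduce $\frac{rs^2tx}{DC}$ to lowest terms as $\frac{x}{c}$ with $c=DC/(rs^2t)$ and invoke the standard description of $cusp(\Gamma_0(N))$ from \cite{St} (two fractions $a_i/c_i$ with $c_i\mid N$ give the same cusp iff $c_1=c_2$ and $a_1\equiv a_2\pmod{(c_1,N/c_1)}$), so that injectivity becomes pure bookkeeping. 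The paper instead argues injectivity by hand: it assumes some $\gamma\in\Gamma_0(DC)$ carries one representative to another, manipulates the resulting fractional equation to force $r_1=r_2$, $s_1=s_2$, $t_1=t_2$, and then passes to auxiliary matrices in $\SL_2(\BZ)$ to extract $x_1\equiv x_2\pmod t$. Your route is shorter and cleaner because it outsources the equivalence criterion to a known result; the paper's is more self-contained but correspondingly more computational. Either way the content is the same, and your care in checking that $c=DC/(rs^2t)$ really is a divisor of $DC$ and that $(x,c)=1$ (using square-freeness of $D$ and the coprimality of $x$ with $D$) is exactly the point that needs attention.
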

\begin{proof}
It is clear that any divisor of $DC=\frac{D}{C}\cdot C^2$ is of the form $rs^2t$ with some $r,s,t$ as above. Since $(rs^2t,\frac{DC}{rs^2t})=t$ for any such a divisor, we find that the above set has at most $\sum_{1\leq d\mid DC}\varphi(d,\frac{DC}{d})$ elements. Thus, it is enough to prove that the above are all different cusps as the number of cusps of $X_0(DC)$ is also $\sum_{1\leq d\mid DC}\varphi(d,\frac{DC}{d})$.

Suppose $[\frac{r_1s^2_1t_1x_1}{DC}]=[\frac{r_2s^2_2t_2x_2}{DC}]$, then there exists some $\gamma=\left(
                                                                                                    \begin{array}{cc}
                                                                                                      \alpha & \beta \\
                                                                                                      DC\delta & \omega \\
                                                                                                    \end{array}
                                                                                                  \right)\in\Gamma_0(DC)
$ such that $\gamma(\frac{r_1s^2_1t_1x_1}{DC})=\frac{r_2s^2_2t_2x_2}{DC}$. It follows that
\[{r_2s^2_2t_2x_2}={r_1s^2_1t_1}\cdot\frac{\alpha x_1+\beta\frac{DC}{r_1s^2_1t_1}}{\delta r_1s^2_1t_1x_1+\omega}.\]
But since $\delta r_1s^2_1t_1x_1+\omega$ is a unit at every prime dividing $r_1s_1t_1$, we find that $r_1,s_1,t_1$ divides $r_2,s_2,t_2$ respectively, and hence $r_1=r_2,s_1=s_2$ and $t_1=t_2$ by symmetry. If we choose some $u_i,v_i$ ($i=1,2$) such that $\left(
                                                                                                \begin{array}{cc}
                                                                                                  x_i & u_i \\
                                                                                                  \frac{DC}{rs^2t} & v_i \\
                                                                                                \end{array}
                                                                                              \right)\in SL_2(\BZ)
$, then
\[\gamma\cdot\left(
               \begin{array}{cc}
                 x_1 & u_1 \\
                 \frac{DC}{rs^2t} & v_1 \\
               \end{array}
             \right)(\infty)=\left(
               \begin{array}{cc}
                 x_2 & u_2 \\
                 \frac{DC}{rs^2t} & v_2 \\
               \end{array}
             \right)(\infty),
\]
so that there exists some integer $n$ such that
\[\pm\gamma\cdot\left(
               \begin{array}{cc}
                 x_1 & u_1 \\
                 \frac{DC}{rs^2t} & v_1 \\
               \end{array}
             \right)=\left(
               \begin{array}{cc}
                 x_2 & u_2 \\
                 \frac{DC}{rs^2t} & v_2 \\
               \end{array}
             \right)\left(
                      \begin{array}{cc}
                        1 & n \\
                        0 & 1 \\
                      \end{array}
                    \right),
\]
which implies, after a straight forward calculation, that
\[\frac{DC}{rs^2t}v_1-\frac{DC}{rs^2t}v_2\equiv n\cdot\frac{DC}{rs^2t}\cdot\frac{DC}{rs^2t}\pmod{DC}.\]
Because $t^2\mid DC$, it follows that $v_1\equiv v_2\pmod t$. We find thus $x_1\equiv x_2\pmod t$ which completes the proof of the lemma.
\end{proof}

We will always use the above kind of representatives for cusps in the following investigation.
\begin{lem}\label{change}
Let $p$ be a prime divisor of $D$ and $[\frac{rs^2tx}{DC}]$ be a cusp of $X_0(DC)$ , then we have that:
\begin{enumerate}
  \item If $p\mid r$, then $[\frac{rs^2tx}{DC}]=[\frac{(r/p)s^2tx}{DC/p}]$ in $X_0(DC/p)$;
  \item If $p\mid s$, then $[\frac{rs^2tx}{DC}]=[\frac{r(s/p)^2tx}{DC/p^2}]$ in $X_0(DC/p^2)$;
  \item If $p\mid t$, then $[\frac{rs^2tx}{DC}]=[\frac{r(s/p)^2(t/p)\cdot(px)}{DC/p^2}]$ in $X_0(DC/p^2)$;
  \item If $p\mid \frac{D}{Cr}$, then $[\frac{rs^2tx}{DC}]=[\frac{rs^2t\cdot(px)}{DC/p}]$ in $X_0(DC/p)$;
  \item If $p\mid \frac{C}{st}$, then $[\frac{rs^2tx}{DC}]=[\frac{rs^2t\cdot(p^2x)}{DC/p^2}]$ in $X_0(DC/p^2)$.
\end{enumerate}
\end{lem}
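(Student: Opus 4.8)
The plan is to deduce all five identities from the same kind of $\SL_2(\BZ)$-manipulation used in the proof of Lemma~\ref{representives}, now applied to the smaller modular curves $X_0(DC/p^{j})$. The preliminary point is bookkeeping: for a prime $p\mid D$ and a cusp $[\frac{rs^2tx}{DC}]$, exactly one of the five conditions holds (since $\frac{D}{C}=r\cdot\frac{D}{Cr}$ is square-free and $C=s\cdot t\cdot\frac{C}{st}$ is square-free), the exponent $j:=v_p(DC)$ is $1$ in cases (1) and (4), where $p\mid\frac{D}{C}$ and $p\nmid C$, and $2$ in cases (2), (3), (5), where $p\mid C$; and $DC/p^{j}=(D/p)\cdot C'$ with $C'=C$ if $j=1$ and $C'=C/p$ if $j=2$, so $C'\mid D/p$ with $D/p$ square-free and Lemma~\ref{representives} applies with $(D,C)$ there replaced by $(D/p,C')$. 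The asserted equalities are equalities of cusps \emph{inside} $X_0(DC/p^{j})$: the left-hand bracket denotes the $\Gamma_0(DC/p^{j})$-orbit of the point $\frac{rs^2tx}{DC}\in\BP^{1}(\BQ)$, and the right-hand bracket is a fraction of the shape $\frac{r's'^{2}t'x'}{(D/p)C'}$ from Lemma~\ref{representives} for the smaller level. So in each case one first checks, prime by prime, that the right-hand fraction is a legitimate standard representative there --- i.e.\ that $r'\mid\frac{D/p}{C'}$, that $s',t'\mid C'$ are coprime, and that $x'$ is coprime to $D/p$ and a unit modulo $t'$ --- a verification using only that $D$ (hence $C$) is square-free and that $x$ is coprime to $D$ and a unit modulo $t$; for instance in case (4) the new $x$-coordinate $px$ is coprime to $D/p$ and to $t$ because the prime $p\mid\frac{D}{C}$ is coprime to $C\supseteq t$.

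Granting this, cases (1) and (2) are immediate: cancelling $p$, respectively $p^{2}$, from numerator and denominator shows the right-hand fraction is literally the point $\frac{rs^2tx}{DC}$, so by Lemma~\ref{representives} the two standard representatives name the same cusp of $X_0(DC/p^{j})$. Cases (3), (4), (5) are the crux, because there the right-hand fraction is a \emph{different} point of $\BP^{1}(\BQ)$, and one must exhibit $\gamma\in\Gamma_0(DC/p^{j})$ with $\gamma\bigl(\frac{rs^2tx}{DC}\bigr)$ equal to it. Writing $\frac{rs^2tx}{DC}$ in lowest terms as $\frac{x}{m}$ with $m=\frac{D/C}{r}\bigl(\frac{C}{st}\bigr)^{2}t$, and setting $a:=v_p(m)$, a short computation identifies the right-hand fraction, in lowest terms, with $\frac{p^{a}x}{m/p^{a}}$; here $a>0$ (it equals $1$ in cases (3),(4) and $2$ in case (5)) whereas $p\nmid DC/p^{j}$, so the $p$-part of $m$ is superfluous at this level. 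To build $\gamma$ I would imitate the proof of Lemma~\ref{representives}: choose matrices in $\SL_2(\BZ)$ sending $\infty$ to $\frac{x}{m}$ and to $\frac{p^{a}x}{m/p^{a}}$, with bottom rows $(m,v)$ and $(m/p^{a},v')$, so that $v\equiv x^{-1}\pmod m$ and $v'\equiv(p^{a}x)^{-1}\pmod{m/p^{a}}$; put $\gamma$ equal to the second times the inverse of the first; and compute that its lower-left entry is $(m/p^{a})(v-p^{a}v')$. One must then check that $DC/p^{j}$ divides this; since $m/p^{a}\mid DC/p^{j}$ this is the condition $\frac{DC/p^{j}}{m/p^{a}}\mid v-p^{a}v'$, with $\frac{DC/p^{j}}{m/p^{a}}$ equal to $rs^{2}t$ or $rs^{2}(t/p)$. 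But $v-p^{a}v'\equiv0\pmod{m/p^{a}}$ automatically --- because $p^{a}(p^{a}x)^{-1}\equiv x^{-1}$ there, $p$ being a unit modulo $m/p^{a}$ --- hence $v-p^{a}v'\equiv0$ modulo $\gcd(m/p^{a},\frac{DC/p^{j}}{m/p^{a}})$, a divisor of $t$; and the freedom to change $v$ by multiples of $m$, i.e.\ by multiples of $\gcd(m,\frac{DC/p^{j}}{m/p^{a}})\mid t$, then upgrades this to a congruence modulo $\frac{DC/p^{j}}{m/p^{a}}$ itself. This puts $\gamma$ in $\Gamma_0(DC/p^{j})$ and closes the case.

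The main obstacle I expect is organisational, not conceptual: there are five cases, and in each one must track, for every prime of $D$, its exponent in the numerator, in the reduced denominator $m$, and in the level $DC/p^{j}$, with $p$ occupying a different one of the slots $r$, $s$, $t$, $\frac{D}{Cr}$, $\frac{C}{st}$; one must verify all the constraints of Lemma~\ref{representives} at the smaller level and, in cases (3)--(5), carry through the matrix computation and confirm the divisibility of the lower-left entry. Each step is routine, but there are many of them and they interlock; one must be careful that square-freeness of $D$ is invoked exactly where the factor $p$ migrates between the ``$r$/$s$/$t$-part'' and the ``$x$-part'' of a representative.
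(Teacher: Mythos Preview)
Your approach is correct but takes a genuinely different route from the paper's. You argue \emph{constructively}: for cases (3)--(5) you write both fractions in lowest terms as $\frac{x}{m}$ and $\frac{p^{a}x}{m/p^{a}}$, lift each to an $\SL_2(\BZ)$-matrix, and verify that the quotient lies in $\Gamma_0(DC/p^{j})$ by a congruence argument on the lower-left entry. The paper instead argues by \emph{uniqueness}: since Lemma~\ref{representives} gives a full set of distinct representatives at the smaller level, it suffices to assume $[\frac{rs^2tx}{DC}]=[\frac{r's'^{2}t'x'}{DC/p^{j}}]$ for \emph{some} standard triple $(r',s',t',x')$, write out the fractional-linear equation coming from an arbitrary $\gamma\in\Gamma_0(DC/p^{j})$ realising this equality, and read off prime-by-prime that $r'=r$, $s'=s$, $t'=t/p$ (in case (3), with the analogous conclusions in (4) and (5)) by checking which factors of the denominator are units. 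Your method is more hands-on and yields an explicit matrix; the paper's is shorter and avoids building anything, at the cost of leaving the determination of $x'$ implicit.

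One small imprecision in your write-up: the step ``the freedom to change $v$ by multiples of $m$ \ldots\ upgrades this to a congruence modulo $\frac{DC/p^{j}}{m/p^{a}}$'' needs that $\gcd(m,\frac{DC/p^{j}}{m/p^{a}})$ actually divides $v-p^{a}v'$, not merely that it divides $t$. This holds because in all three cases $p\nmid\frac{DC/p^{j}}{m/p^{a}}$ (easy check: $p$ lies in none of the factors $r$, $s$, $t'$ at the smaller level), so $\gcd(m,\frac{DC/p^{j}}{m/p^{a}})=\gcd(m/p^{a},\frac{DC/p^{j}}{m/p^{a}})$ and the latter divides $m/p^{a}\mid v-p^{a}v'$. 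You should make this explicit.
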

\begin{proof}
The first two assertions are obvious. Since the proofs of last three assertions are similar, we will in the following only give that of $(3)$. If $[\frac{rs^2tx}{DC}]=[\frac{r's'^2t'x'}{DC/p^2}]$ in $X_0(DC/p^2)$, then there exists some $\gamma=\left(
                                                                                                         \begin{array}{cc}
                                                                                                           \alpha & \beta \\
                                                                                                           \frac{DC}{p^2}\delta & \omega \\
                                                                                                         \end{array}
                                                                                                       \right)\in \Gamma_0(\frac{DC}{p^2})
$ sending the former point to the latter one, and we find thus
\[r's'^2t'x'=rs^2(t/p)\cdot\frac{x\alpha+\beta\frac{DC}{rs^2t}}{\delta rs^2(t/p)x+\omega p}.\]
Since $\delta rs^2(t/p)x+\omega p$ is a unit for any prime dividing $rs^2(t/p)$, it follows that $r,s,t/p$ divides $r',s',t'$ respectively. We find thus
\[\frac{r'}{r}\cdot\frac{s'^2}{s^2}\cdot\frac{t'}{t/p}\cdot x'=\frac{x\alpha+\beta\frac{DC}{rs^2t}}{\delta rs^2(t/p)x+\omega p}.\]
If there is some prime $q\mid r's't'$ (so that $q\neq p$ as $p\nmid t'$) but not dividing $rst$, then $x\alpha+\beta\frac{DC}{rs^2t}$ will be a $q$-adic unit. But this contradicts to the above equation, so we have proved the assertion.
\end{proof}
Let $K$ be a positive divisor of $D$ and $1\leq\alpha\mid K$. It is not difficult to deduce from the above lemma that: if $(K,rst)=1$, then
\begin{align}\label{4.1}
[\frac{rs^2t\alpha x}{DC}]=[\frac{rs^2t(\frac{K(K,C)}{\alpha}x)}{DC/K(K,C)}]\in X_0(\frac{DC}{K(K,C)});
\end{align}
and if $K\mid t$, then
\begin{align}\label{4.2}
[\frac{rs^2t\alpha x}{DC}]=\frac{rs^2(\frac{t}{K})(\frac{K}{\alpha}x)}{DC/K^2}]\in X_0(\frac{DC}{K^2})
\end{align}
We leave the verifications to the reader. Finally, we give some general observation about how the constant terms of modular forms behave under the operators $[p]^{\pm}_\psi$. Let $N$ be a positive integer and $g\in M_2(\Gamma_0(N),\BC)$. Let $[\frac{a}{c}]$ be a cusp represented by two co-prime integers $a,c$, and let $\gamma=\left(
                                             \begin{array}{cc}
                                               a & b \\
                                               c & d \\
                                             \end{array}
                                           \right)$
be a matrix in $\SL_2(\BZ)$ such that $\gamma([\infty])=[\frac{a}{c}]$. For any prime $p$, we may and will always assume $p\mid d$ when $p\nmid c$. If $p$ is prime to the conductor of $\psi$, then since
\[\gamma_p\cdot\gamma=
\begin{cases}
\left(
           \begin{array}{cc}
             a & pb \\
             c/p & d \\
           \end{array}
         \right)\left(
                  \begin{array}{cc}
                    p & 0 \\
                    0 & 1 \\
                  \end{array}
                \right)
 &,\text{if}\ p\mid c\\
\left(
           \begin{array}{cc}
             ap & b \\
             c & d/p \\
           \end{array}
         \right)\left(
                  \begin{array}{cc}
                    1 & 0 \\
                    0 & p \\
                  \end{array}
                \right)
&,\text{if}\ p\nmid c,
\end{cases}
\]
it follows that
\[a_0([p]^+_\psi(g);[\frac{a}{c}])=
\begin{cases}
a_0(g;[\frac{a}{c}])-p\cdot\psi(p)\cdot a_0(g;[\frac{ap}{c}])
 &,\text{if}\ p\mid c\\
a_0(g;[\frac{a}{c}])-p^{-1}\cdot\psi(p)\cdot a_0(g;[\frac{ap}{c}])
&,\text{if}\ p\nmid c,
\end{cases}
\]
and
\[a_0([p]^-_\psi(g);[\frac{a}{c}])=
\begin{cases}
a_0(g;[\frac{a}{c}])-\psi^{-1}(p)\cdot a_0(g;[\frac{ap}{c}])
 &,\text{if}\ p\mid c\\
a_0(g;[\frac{a}{c}])-p^{-2}\cdot\psi^{-1}(p)\cdot a_0(g;[\frac{ap}{c}])
&,\text{if}\ p\nmid c.
\end{cases}
\]
Thus, for any positive square-free integer $K$ prime to the conductor of $\psi$, we find by induction that
\begin{align}
  a_0([K]^+_\psi(g);[\frac{a}{c}])=
\begin{cases}
\sum_{1\leq\alpha\mid K}(-1)^{\nu(\alpha)}\cdot\alpha\cdot\psi(\alpha)\cdot a_0(g;[\frac{\alpha a}{c}])
 &,\text{if}\ K\mid c\\
\sum_{1\leq\alpha\mid K}(-1)^{\nu(\alpha)}\cdot\alpha^{-1}\cdot\psi(\alpha)\cdot a_0(g;[\frac{\alpha a}{c}])
&,\text{if}\ (K,c)=1,
\end{cases}
\end{align}
and
\begin{align}
  a_0([K]^-_\psi(g);[\frac{a}{c}])=
\begin{cases}
\sum_{1\leq\alpha\mid K}(-1)^{\nu(\alpha)}\cdot\psi^{-1}(\alpha)\cdot a_0(g;[\frac{\alpha a}{c}])
 &,\text{if}\ K\mid c\\
\sum_{1\leq\alpha\mid K}(-1)^{\nu(\alpha)}\cdot\alpha^{-2}\cdot\psi^{-1}(\alpha)\cdot a_0(g;[\frac{\alpha a}{c}])
&,\text{if}\ (K,c)=1.
\end{cases}
\end{align}

\textbf{4.2. The constant terms of $E_{M,L,\psi}$.} Let $\psi$ be a Dirichlet character of conductor $f_\psi=f$. We extend $\psi$ to a function on $\BZ$ so that $\psi(n)=0$ if $(n,f)\neq1$. For any cusp $[\frac{s^2tx}{f^2}]\in X_0(f^2)$ with $s,t\mid f$ and $(s,t)=1$ as in Lemma~\ref{representives}, we can choose a matrix $\left(
                                                                                                                     \begin{array}{cc}
                                                                                                                       x & u \\
                                                                                                                       \frac{f^2}{s^2t} & v \\
                                                                                                                     \end{array}
                                                                                                                   \right)
\in \SL_2(\BZ)$ which maps $[\infty]$ to $[\frac{s^2tx}{f^2}]$. Then it follows from Eqs. (2.1) and (2.4) that
\begin{align*}
  a_0(E_\psi;[\frac{s^2tx}{f^2}])&=-\frac{1}{4g(\psi)}\sum_{a\in({\BZ}/{f\BZ})^\times}\sum_{b\in({\BZ}/{f^2\BZ})^\times}\psi(a)\cdot\psi(b)\cdot B_2(\frac{xa}{f}+\frac{b}{s^2t})\\
  &=-\frac{1}{4g(\psi)}\sum_{b\in({\BZ}/{f^2\BZ})^\times}\psi(b)\left(\sum_{a\in({\BZ}/{f\BZ})^\times}\psi(a)\cdot B_2(\frac{xa}{f}+\frac{b}{s^2t})\right).
\end{align*}
Since the function in the above bracket depends only on $b\pmod{s^2t}$ and $\psi$ is primitive of conductor $f$, we find that $a_0(E_\psi;[\frac{s^2tx}{f^2}])$ must be zero unless $st=f$. However, if $st=f$, then
\begin{align*}
  a_0(E_\psi;[\frac{s^2tx}{f^2}])&=-\frac{1}{4g(\psi)}\sum_{a\in({\BZ}/{f\BZ})^\times}\sum_{b\in({\BZ}/{f^2\BZ})^\times}\psi(a)\cdot\psi(b)\cdot B_2(\frac{xa}{f}+\frac{b}{sf})\\
  &=-\frac{\psi^{-1}(x)}{4g(\psi)}\sum_{a\in({\BZ}/{f\BZ})^\times}\psi(a)\left(\sum_{b,k\in({\BZ}/{f\BZ})^\times}\psi(b)\cdot B_2(\frac{as+b+kf}{fs})\right),
\end{align*}
with the function in the bracket depends only on $a\pmod{\frac{f}{s}}$ and hence is zero unless $s=1$. It follows that
\begin{align}
a_0(E_\psi;[\frac{s^2tx}{f^2}])=
\begin{cases}
\psi^{-1}(x)\cdot n_\psi&,\text{if}\ s=1\ and\ t=f\\
0 &,\text{otherwise,}\
\end{cases}
\end{align}
where
\[n_\psi:=-\frac{f}{4g(\psi)}\sum_{a,b\in{\BZ}/{f\BZ}}\psi(a)\cdot\psi(b)\cdot B_2(\frac{a+b}{f}).\]
In particular, we find that
\begin{align}
  a_0(E_\psi;[\frac{s^2t(\alpha x)}{f^2}])=\psi^{-1}(\alpha)\cdot a_0(E_\psi;[\frac{s^2tx}{f^2}]),
\end{align}
where $\alpha$ is any integer prime to $f$. While the above is valid for any $\psi$ (not necessarily quadratic), we will assume $\psi$ is quadratic in the rest of this paper.

\begin{lem}\label{constant1}
For any quadratic character $\psi$ of conductor $f\mid C$, the constant terms of $E_{D,f,\psi}$ are given as
\[
a_0(E_{D,f,\psi};[\frac{rs^2tx}{DC}])=
\begin{cases}
\varphi(\frac{D}{f})\cdot n_\psi\cdot\frac{(-1)^{\nu(\frac{D}{frs})}\psi(\frac{DC}{frs^2tx})}{rs}&,\text{if}\ (s,f)=1\ and\ f\mid t\\
0 &,\text{otherwise.}\
\end{cases}
\]
In particular, $a_0(E_{D,f,\psi};[\frac{rs^2t(\alpha x)}{DC}])=\psi(\alpha)\cdot a_0(E_{D,f,\psi};[\frac{rs^2tx}{DC}]))$ for any integer $\alpha$ prime to $D$.
\end{lem}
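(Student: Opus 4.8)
The plan is to funnel the constant terms of $E_{D,f,\psi}$ through those of $E_\psi$ on $X_0(f^2)$ (which are already recorded above: $\psi^{-1}(x)\,n_\psi$ at $[\tfrac{fx}{f^2}]$ and $0$ at all other cusps), and then to run a purely multiplicative bookkeeping over the primes dividing $D/f$. First I would simplify: since here $(M,L)=(D,f)$, the operator $[L/f]^-_\psi=[1]^-_\psi$ is the identity, so by Definition~\ref{key definition} $E_{D,f,\psi}=[D/f]^+_\psi(E_\psi)$. As the $\gamma_p$ with $p\mid D/f$ commute, expanding $[D/f]^+_\psi=\prod_{p\mid D/f}(1-\psi(p)\gamma_p)$ gives
\[
E_{D,f,\psi}=\sum_{\alpha\mid D/f}(-1)^{\nu(\alpha)}\psi(\alpha)\;E_\psi\Big|\left(\begin{smallmatrix}\alpha&0\\0&1\end{smallmatrix}\right),
\]
and $\gcd(D/f,f)=1$ makes every $\psi(\alpha)$ equal to $\pm1$. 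For a cusp $[\tfrac ac]$ in lowest terms and $\alpha$ squarefree, a short factorization of $\left(\begin{smallmatrix}\alpha&0\\0&1\end{smallmatrix}\right)\left(\begin{smallmatrix}a&b\\c&d\end{smallmatrix}\right)$ as an $\SL_2(\BZ)$-matrix times an upper-triangular one yields
\[
a_0\Big(E_\psi\Big|\left(\begin{smallmatrix}\alpha&0\\0&1\end{smallmatrix}\right);[\tfrac ac]\Big)=\frac{\gcd(\alpha,c)^2}{\alpha}\;a_0\Big(E_\psi;\big[\tfrac{\alpha a/\gcd(\alpha,c)}{c/\gcd(\alpha,c)}\big]\Big),
\]
which is consistent with, and could also be obtained by iterating, the per-prime constant-term identities for $[p]^{\pm}_\psi$ established in \S4.1.

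Next I would put the cusp $[\tfrac{rs^2tx}{DC}]$ into lowest terms, namely $[\tfrac{x}{c_0}]$ with $c_0=DC/(rs^2t)$, and compute $v_p(c_0)$ prime by prime: a prime $p\mid D/f$ divides $c_0$ exactly when $p\mid\tfrac{D}{Cr}\cdot\tfrac{C}{st}\cdot\tfrac{t}{\gcd(t,f)}$. Thus $D/f=K_{\mathrm{in}}K_{\mathrm{out}}$ with $K_{\mathrm{out}}=\gcd(D/f,rs)$ and $K_{\mathrm{in}}=(D/f)/K_{\mathrm{out}}$, $\gcd(\alpha,c_0)=\gcd(\alpha,K_{\mathrm{in}})$, and writing $\alpha=\alpha_{\mathrm{in}}\alpha_{\mathrm{out}}$ accordingly the factor $\gcd(\alpha,c_0)^2/\alpha$ becomes $\alpha_{\mathrm{in}}/\alpha_{\mathrm{out}}$. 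It then remains to identify each cusp $[\tfrac{\alpha_{\mathrm{out}}x}{c_0/\alpha_{\mathrm{in}}}]$ with a standard cusp $[\tfrac{s'^2t'x'}{f^2}]$ of $X_0(f^2)$, by stripping off the primes of $DC/f^2$ via Lemma~\ref{change} (equivalently \eqref{4.1} and \eqref{4.2}). Since $\alpha$ is prime to $f$, the $f$-primary part of $c_0/\alpha_{\mathrm{in}}$ is $\prod_{p\mid f}p^{\,2-v_p(t)}$, which forces $t'=\gcd(f,t)$ and $s'=\gcd(f,C/(st))$; by the formula for $a_0(E_\psi;\cdot)$ recalled above the term is $0$ unless $s'=1$ and $t'=f$, i.e. unless $f\mid t$ (which, with $(s,t)=1$, also forces $(s,f)=1$) — this is exactly the stated support condition. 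When $f\mid t$ one has $K_{\mathrm{out}}=rs$, $K_{\mathrm{in}}=D/(frs)$, and the descended cusp is $[\tfrac{fx'}{f^2}]$ with $x'\equiv\alpha_{\mathrm{out}}\,x\,q(\alpha_{\mathrm{in}})\pmod f$, where $q(\alpha_{\mathrm{in}})=\tfrac{DC}{f^2rs^2t_0\alpha_{\mathrm{in}}}$ ($t_0:=t/f$) is the prime-to-$f$ part of $c_0/\alpha_{\mathrm{in}}$.

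Finally I would assemble everything: substituting $a_0(E_\psi;[\tfrac{fx'}{f^2}])=\psi^{-1}(x')\,n_\psi$ and using that $\psi$ is quadratic (so $\psi(\alpha)^2=1$ and $\psi^{-1}=\psi$), all characters of $\alpha_{\mathrm{in}}$ and $\alpha_{\mathrm{out}}$ cancel and the sum factors as
\[
n_\psi\,\psi^{-1}\!\Big(\tfrac{DC}{frs^2tx}\Big)\Big(\sum_{\alpha_{\mathrm{in}}\mid K_{\mathrm{in}}}(-1)^{\nu(\alpha_{\mathrm{in}})}\alpha_{\mathrm{in}}\Big)\Big(\sum_{\alpha_{\mathrm{out}}\mid rs}\frac{(-1)^{\nu(\alpha_{\mathrm{out}})}}{\alpha_{\mathrm{out}}}\Big).
\]
The inner sums are the Euler products $\prod_{p\mid K_{\mathrm{in}}}(1-p)=(-1)^{\nu(K_{\mathrm{in}})}\varphi(K_{\mathrm{in}})$ and $\prod_{p\mid rs}(1-\tfrac1p)=\varphi(rs)/(rs)$, and since $\varphi(K_{\mathrm{in}})\varphi(rs)=\varphi(D/f)$ and $\nu(K_{\mathrm{in}})=\nu(D/(frs))$, this is precisely the claimed value. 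The ``in particular'' clause then drops out of the main formula upon replacing $x$ by $\alpha x$ (again using that $\psi$ is quadratic).

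I expect the main obstacle to be the cusp bookkeeping in the second paragraph: tracking, prime by prime, how multiplying the numerator by $\alpha$ and then descending to level $f^2$ alters the standard-form data $(s',t',x')$ — in particular pinning down $x'$ as a residue modulo $f$ with the correct sign — and then recognizing the resulting $\alpha$-sum as a product of two elementary Euler factors. By contrast, the vanishing dichotomy ($f\mid t$) and the reductions coming from $\psi$ being quadratic are comparatively mechanical.
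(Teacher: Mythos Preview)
Your approach is essentially the same as the paper's: both start from $E_{D,f,\psi}=[D/f]^+_\psi(E_\psi)$, expand over divisors of $D/f$, push each term down to a constant term of $E_\psi$ at a cusp of $X_0(f^2)$, invoke the known values of $a_0(E_\psi;\cdot)$ there, and then recognize the remaining sum as an Euler product. The only organizational difference is that the paper splits $D/f=K_rK_sK_tK$ into four pieces (according to divisibility by $r$, $s$, $t$, or none) and applies the per-prime formulas (4.3) block by block, whereas you split $D/f=K_{\mathrm{in}}K_{\mathrm{out}}$ into two pieces (according to divisibility by $c_0$) and use the closed formula $\gcd(\alpha,c_0)^2/\alpha$ at once; since $K_{\mathrm{out}}=K_rK_s$ and $K_{\mathrm{in}}=K_tK$, these are the same decomposition viewed at different granularity.

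One small slip to fix: your formula ``the $f$-primary part of $c_0/\alpha_{\mathrm{in}}$ is $\prod_{p\mid f}p^{\,2-v_p(t)}$'' is wrong at primes $p\mid\gcd(f,s)$, where $v_p(c_0)=0$ rather than $2$; correspondingly your identification $s'=\gcd(f,C/(st))$ is off in that case. This does not affect the outcome --- with the correct values one gets $p\mid s'$, hence $s'\neq 1$, and the term still vanishes --- but the intermediate formula should read $v_p(c_0)=2-2v_p(s)-v_p(t)$ for $p\mid f$.
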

\begin{proof}

Recall that $E_{D,f,\psi}$ is defined as $[\frac{D}{f}]^+_\psi(E_\psi)$. For any cusp $[\frac{rs^2tx}{DC}]$ of $X_0(DC)$, we decompose $\frac{D}{f}$ as $\frac{D}{f}=K_r\cdot K_s\cdot K_t\cdot K$ with $K_r:=(\frac{D}{f},r)=r,\ K_s:=(\frac{D}{f},s)$ and $K_t:=(\frac{D}{f},t)$. By Eqs. (4.1), (4.2) and the first formula of Eq. (4.3), we find that
\begin{align*}
  &a_0(E_{D,f,\psi};[\frac{rs^2tx}{DC}])\\
  &=\sum_{1\leq\alpha\mid K}(-1)^{\nu(\alpha)}\cdot\psi(\alpha)\cdot\alpha\cdot a_0(E_{\frac{D}{K},f,\psi};[\frac{rs^2t(\frac{K(K,C)}{\alpha}x)}{DC/K(K,C)}])\\
  &=\sum_{1\leq\alpha\mid K,1\leq\alpha_t\mid K_t}(-1)^{\nu(\alpha\alpha_t)}\cdot\psi(\alpha\alpha_t)\cdot\alpha\alpha_t\cdot  a_0(E_{\frac{D}{K_tK},f,\psi};[\frac{rs^2(\frac{t}{K_t})(\frac{K_tK(K,C)}{\alpha_t\alpha}x)}{DC/K^2_tK(K,C)}]).
\end{align*}
It then follows from the second formula of Eq. (4.3) together with (1) and (2) of Lemma~\ref{change} that
\begin{align*}
  &a_0(E_{D,f,\psi};[\frac{rs^2tx}{DC}])\\
  &=\sum(-1)^{\nu(\alpha_r\alpha_s\alpha_t\alpha)}\cdot\psi(\alpha_r\alpha_s\alpha_t\alpha)\cdot(\alpha_r\alpha_s)^{-1}\cdot(\alpha_t\alpha)\cdot  a_0(E_\psi;[\frac{(\frac{s}{K_s})^2(\frac{t}{K_t})(\frac{K_tK(K,C)}{\alpha_t\alpha}\alpha_r\alpha_s x)}{f^2}])\\
  &=\psi(K_tK(K,C))\cdot\sum(-1)^{\nu(\alpha_r\alpha_s\alpha_t\alpha)}\cdot(\alpha_r\alpha_s)^{-1}\cdot(\alpha_t\alpha)\cdot  a_0(E_\psi;[\frac{(\frac{s}{K_s})^2(\frac{t}{K_t})x}{f^2}]),
\end{align*}
where $\alpha_r,\alpha_s,\alpha_t$ and $\alpha$ runs through all the positive divisors of $K_r,K_s,K_t$ and $K$ respectively. It follows from (4.5) and (4.6) that the above constant term equals
\[\psi(K_tK(K,C))\cdot\prod_{p\mid K_rK_s}(1-\frac{1}{p})\cdot\prod_{p\mid K_tK}(1-p)\cdot a_0(E_\psi;[\frac{(\frac{s}{K_s})^2(\frac{t}{K_t})x}{f^2}]),\]
which is zero unless $s=K_s$ and $fK_t\mid t$, or equivalently, $(s,f)=1$ and $f\mid t$. Moreover, if these conditions are satisfied, then $K_rK_s=rs, K_tK=\frac{D}{frs}$ and $(K,C)=\frac{C}{st}$, which completes the proof.
\end{proof}

\begin{lem}\label{constant2}
For any quadratic character $\psi$, the constant terms of $E_{M,f\frac{D}{M},\psi}$ are given as
\[a_0(E_{M,f\frac{D}{M},\psi};[\frac{rs^2tx}{DC}])=
\begin{cases}
\varphi(\frac{D}{f})\cdot\mu(\frac{D}{M})\cdot n_\psi\cdot\frac{(-1)^{\nu(\frac{D}{frs})}\psi(\frac{DC}{frs^2tx})}{rs\frac{D}{M}}
 &,\text{if}\ \frac{D}{M}\mid rs,\ (s,f)=1\ and\ f\mid t\\
0
&,\text{otherwise,}\
\end{cases}\]
where $\mu(n)=\prod_{p\mid n}(1+p)$ for any positive integer $n$. In particular, $a_0(E_{M,f\frac{D}{M},\psi};[\frac{rs^2t(\alpha x)}{DC}])=\psi(\alpha)\cdot a_0(E_{M,f\frac{D}{M},\psi};[\frac{rs^2tx}{DC}]))$ for any integer $\alpha$ prime to $D$.
\end{lem}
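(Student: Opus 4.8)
The plan is to follow the strategy of the proof of Lemma~\ref{constant1}, now with the outer level‑raising operator $[\frac{L}{f}]^-_\psi=[\frac{D}{M}]^-_\psi$ also present. By Definition~\ref{key definition} and $f\mid M$ we have $E_{M,f\frac{D}{M},\psi}=[\frac{D}{M}]^-_\psi(h)$ where $h:=[\frac{M}{f}]^+_\psi(E_\psi)$ is a modular form on $\Gamma_0(Mf)$ (Lemma~\ref{1.1}), and $\frac{D}{M}$ is prime to $Mf$ since $D$ is square‑free and $f\mid M$. First I would dispose of $[\frac{D}{M}]^-_\psi$. Write $c=DC/(rs^2t)$ for the reduced denominator of the cusp $[\frac{rs^2tx}{DC}]$ and expand $[\frac{D}{M}]^-_\psi=\sum_{\beta\mid D/M}(-1)^{\nu(\beta)}\beta^{-1}\psi^{-1}(\beta)\gamma_\beta$. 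Using the matrix identity for $\gamma_p\cdot\gamma$ recalled just before~(4.3) to transform constant terms under $\gamma_\beta$ (equivalently (4.4), and Lemma~\ref{change}, (4.1), (4.2) to reinterpret the resulting cusps), together with (a) $a_0(h;\cdot)$ depends only on the image of the cusp in $X_0(Mf)$ (as $h$ has that level) and (b) $a_0(h;[\frac{rs^2t(\alpha x)}{DC}])=\psi^{-1}(\alpha)\,a_0(h;[\frac{rs^2tx}{DC}])$ for $\alpha$ prime to $f$ (inherited from the scaling~(4.6) of $E_\psi$ and preserved by $[\,\cdot\,]^{\pm}_\psi$ — expand into $\gamma_\alpha$'s), one obtains
\[a_0\bigl(\gamma_\beta h;[\tfrac{rs^2tx}{DC}]\bigr)=\frac{(\beta,c)^2}{\beta}\,\psi^{-1}(\beta)\,a_0\bigl(h;[\tfrac{rs^2tx}{DC}]\bigr).\]
Summing over $\beta$ and using that $\psi$ is quadratic, so $\psi^{-1}(\beta)^2=1$ for $\beta$ prime to $f$, the coefficient of $a_0(h;[\frac{rs^2tx}{DC}])$ becomes $\sum_{\beta\mid D/M}(-1)^{\nu(\beta)}(\beta,c)^2\beta^{-2}$, which factors over the primes $p\mid\frac{D}{M}$ into $\prod_{p\mid D/M}(1-\varepsilon_p)$ with $\varepsilon_p=1$ if $p\mid c$ and $\varepsilon_p=p^{-2}$ if $p\nmid c$. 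Since $p\mid c\Leftrightarrow p\nmid rs$ for $p\mid D$, this product vanishes as soon as some prime of $\frac{D}{M}$ does not divide $rs$, and equals $\prod_{p\mid D/M}(1-p^{-2})=\varphi(\frac{D}{M})\mu(\frac{D}{M})/(\frac{D}{M})^2$ when $\frac{D}{M}\mid rs$. Hence $a_0(E_{M,f\frac{D}{M},\psi};[\frac{rs^2tx}{DC}])=0$ unless $\frac{D}{M}\mid rs$, in which case it equals $\tfrac{\varphi(D/M)\mu(D/M)}{(D/M)^2}\,a_0(h;[\frac{rs^2tx}{DC}])$.

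It remains to compute $a_0(h;[\frac{rs^2tx}{DC}])=a_0\bigl([\frac{M}{f}]^+_\psi(E_\psi);[\frac{rs^2tx}{DC}]\bigr)$, which is the computation in the proof of Lemma~\ref{constant1} with $D$ replaced by $M$. I would carry it out in the same spirit: expand $[\frac{M}{f}]^+_\psi$ into a combination of $\gamma_\alpha$'s, use that $a_0(E_\psi;\cdot)$ depends only on the image of the cusp in $X_0(f^2)$ together with the scaling~(4.6), and evaluate the resulting $E_\psi$‑term with~(4.5). The outcome is that $a_0(h;[\frac{rs^2tx}{DC}])$ vanishes unless $(s,f)=1$ and $f\mid t$, in which case (one checks $\gcd(c,f^2)=f$) it equals
\[\psi^{-1}(x')\,n_\psi\cdot\prod_{\substack{p\mid M/f\\ p\mid rs}}\Bigl(1-\frac1p\Bigr)\cdot\prod_{\substack{p\mid M/f\\ p\nmid rst}}(1-p),\]
where $x'\equiv x\cdot(DC/(frs^2t))^{-1}\pmod f$ is the numerator of the image cusp in $X_0(f^2)$.

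Finally I would assemble the cases. Assume $\frac{D}{M}\mid rs$, $(s,f)=1$ and $f\mid t$. Since $D$ is square‑free, the primes of $rs$ are those of $\frac{D}{M}$ (all of them, by hypothesis) together with those of $rs/(D/M)$, which divide $\frac{M}{f}$; and $\frac{M}{f}=\frac{rs}{D/M}\cdot\frac{D}{frs}$, so the primes of $\frac{M}{f}$ dividing $rs$ are exactly those of $rs/(D/M)$ and the remaining ones are those of $\frac{D}{frs}$. Multiplying $\frac{\varphi(D/M)\mu(D/M)}{(D/M)^2}$, $\frac{\varphi(rs/(D/M))}{rs/(D/M)}$ and $(-1)^{\nu(D/frs)}\varphi(\frac{D}{frs})$, and using $\varphi(\frac{D}{f})=\varphi(\frac{D}{M})\varphi(\frac{rs}{D/M})\varphi(\frac{D}{frs})$ (pairwise coprime factors with product $\frac{D}{f}=rs\cdot\frac{D}{frs}$), the non‑character part collapses to $\varphi(\frac{D}{f})\mu(\frac{D}{M})\,n_\psi\,(-1)^{\nu(D/frs)}/\bigl(rs\cdot\frac{D}{M}\bigr)$; and since $\psi$ is quadratic, $\psi^{-1}(x')=\psi(x')=\psi(x)\,\psi(DC/(frs^2t))=\psi(\frac{DC}{frs^2tx})$, which gives the claimed formula. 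The ``in particular'' assertion follows at once, since replacing $x$ by $\alpha x$ with $\alpha$ prime to $D$ multiplies $\psi(\frac{DC}{frs^2tx})$ by $\psi(\alpha)^{-1}=\psi(\alpha)$.

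The genuinely delicate step is the first one — getting the cusp bookkeeping for $\gamma_\beta$ exactly right, so that (a) and (b) apply and the local factor comes out $1-p^{-2}$ versus $0$. This is also the only point where the hypothesis that $\psi$ is quadratic is essential for the vanishing: the analogous sum attached to the operator $[\,\cdot\,]^+_\psi$ does not collapse, which is precisely why primes of $\frac{M}{f}$ off $rs$ survive (contributing $1-p$) whereas primes of $\frac{D}{M}$ off $rs$ annihilate the constant term.
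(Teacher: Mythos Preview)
Your approach is essentially the same as the paper's: both write $E_{M,fD/M,\psi}=[\tfrac{D}{M}]^{-}_\psi(E_{M,f,\psi})$, expand the outer operator using (4.4), reduce the resulting cusps, and then feed into Lemma~\ref{constant1}. The paper organizes the reduction by decomposing $\tfrac{D}{M}=H_rH_sH_tH$ with $H_r=(\tfrac{D}{M},r)$ etc.\ and applying (4.1), (4.2) and Lemma~\ref{change} piece by piece; you compress this into a single product via your claim~(b). The outcome and the use of the quadratic hypothesis (to kill the $\psi^{-2}(\beta)$ factors so that the local contribution at $p\mid c$ is exactly $1-1=0$) are identical.

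One point to tighten: your statement of (b) for all $\alpha$ prime to $f$ is stronger than what you need and stronger than what your sketch justifies. For $\alpha\mid M/f$ the expression $[\tfrac{rs^2t(\alpha x)}{DC}]$ is not in the standard cusp form of Lemma~\ref{representives}, and the ``preserved by $[\,\cdot\,]^{\pm}_\psi$'' argument does not go through without precisely the cusp tracking you are trying to avoid. You only ever use (b) for $\alpha=\beta\mid\tfrac{D}{M}$, hence prime to $M$; in that range it is exactly the ``in particular'' clause of Lemma~\ref{constant1} (with $D$ replaced by $M$), after projecting the cusp to $X_0(M\cdot(M,C))$. So either restrict (b) to $\alpha$ prime to $M$ and cite Lemma~\ref{constant1}, or---as the paper does---make the projection explicit via the $H_r,H_s,H_t,H$ decomposition. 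With that adjustment your argument is complete.
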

\begin{proof}
Recall that, for any $M$ divided by $f$, $E_{M,f\cdot\frac{D}{M},\psi}$ is defined as $[\frac{D}{M}]^-_\psi(E_{M,f,\psi})$. For any cusp $[\frac{rs^2tx}{DC}]$ of $X_0(DC)$, we decompose $\frac{D}{M}$ as $\frac{D}{M}=H_r\cdot H_s\cdot H_t\cdot H$ with $H_r:=(\frac{D}{M},r),\ H_s:=(\frac{D}{M},s)$ and $H_t:=(\frac{D}{M},t)$. By Eqs. (4.1), (4.2) and the first formula of Eq. (4.4), we find that
\begin{align*}
  &a_0(E_{M,f\cdot\frac{D}{M},\psi};[\frac{rs^2tx}{DC}])\\
  &=\sum_{1\leq\alpha\mid H}(-1)^{\nu(\alpha)}\cdot\psi^{-1}(\alpha)\cdot a_0(E_{M,f\cdot\frac{D}{MH},\psi};[\frac{rs^2t(\frac{H(H,C)}{\alpha}x)}{DC/H(H,C)}])\\
   &=\sum_{1\leq\alpha\mid H,1\leq\alpha_t\mid H_t}(-1)^{\nu(\alpha_t\alpha)}\cdot\psi^{-1}(\alpha_t\alpha)\cdot
   a_0(E_{M,f\cdot\frac{D}{MH_tH},\psi};[\frac{rs^2(\frac{t}{H_t})(\frac{H_tH(H,C)}{\alpha_t\alpha}x)}{DC/H^2_tH(H,C)}]).
\end{align*}
It then follows from the second formula of Eq. (4.4), (1) and (2) of Lemma~\ref{change} and the last assertion of Lemma 4.3 that
\begin{align*}
  &a_0(E_{M,f\cdot\frac{D}{M},\psi};[\frac{rs^2tx}{DC}])\\
  &=\sum(-1)^{\nu(\alpha_r\alpha_s\alpha_t\alpha)}\cdot\psi^{-1}(\alpha_r\alpha_s\alpha_t\alpha)\cdot(\alpha_r\alpha_s)^{-2}\cdot  a_0(E_{M,f,\psi};[\frac{(\frac{r}{H_r})(\frac{s}{H_s})^2(\frac{t}{H_t})(\frac{H_tH(H,C)}{\alpha_t\alpha}\alpha_r\alpha_s x)}{M\cdot(M,C)}])\\
  &=\psi(HH_t(H,C))\sum(-1)^{\nu(\alpha_r\alpha_s\alpha_t\alpha)}\cdot(\alpha_r\alpha_s)^{-2}\cdot  a_0(E_{M,f,\psi};[\frac{(\frac{r}{H_r})(\frac{s}{H_s})^2(\frac{t}{H_t})x}{M\cdot(M,C)}]),
\end{align*}
where $\alpha_r,\alpha_s,\alpha_t$ and $\alpha$ runs through all the positive divisors of $H_r,H_s,H_t$ and $H$ respectively. It is easy to see that the above sum is zero unless $H_t=H=1,(s,f)=1$ and $f\mid t$, or equivalently, $\frac{D}{M}\mid rs,(s,f)=1$ and $f\mid t$. When these conditions are satisfied, then the assertion follows from the previous Lemma.
\end{proof}

\begin{prop}\label{constant3}
For any $(M,L,\psi)\in\CH(DC)$ with $\psi$ a quadratic character of conductor $f_\psi=f$, the constant terms of $E_{M,L,\psi}$ are given as
\begin{align*}
a_0(E_{M,L,\psi};[\frac{rs^2tx}{DC}])=
\begin{cases}
n_\psi\cdot\frac{\varphi(\frac{D}{f})\cdot\mu(\frac{L}{f})}{L/f}\cdot c_{rstx}&,\text{if}\ (s,f)=1, (M,L)\mid st\ and\ \frac{D}{M}\mid rs\\
0 &,\text{otherwise,}
\end{cases}
\end{align*}
where $c_{rstx}:=\frac{(-1)^{\nu(\frac{D}{frs})}\psi(\frac{DC}{frs^2tx})}{rs}\prod_{p\mid(s,\frac{(M,L)}{f})}(1-\frac{1}{p})$.
\end{prop}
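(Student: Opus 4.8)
The plan is to reduce everything to Lemma~\ref{constant2} by peeling off one more operator. A prime-by-prime inspection of the constraints $D\mid ML$, $M\mid D$, $L\mid D$ together with the square-freeness of $D$ shows that for each prime $p\mid D$ the pair $(v_p(M),v_p(L))$ equals $(1,0)$, $(0,1)$ or $(1,1)$, the last occurring only for $p\mid C$; hence $\frac{D}{M}=\frac{L}{(M,L)}$ and $\frac{ML}{D}=(M,L)$. Setting $K:=\frac{(M,L)}{f}=\frac{ML}{fD}$, which is a square-free divisor of $C/f$ coprime to $f$, we get $\frac{L}{f}=\frac{D}{M}\cdot K$ with $\frac{D}{M}$ and $K$ coprime; since the operators $[\,\cdot\,]^{\pm}_\psi$ commute, this gives
\[
E_{M,L,\psi}=[\tfrac{L}{f}]^-_\psi\circ[\tfrac{M}{f}]^+_\psi(E_\psi)=[K]^-_\psi\Big([\tfrac{D}{M}]^-_\psi\circ[\tfrac{M}{f}]^+_\psi(E_\psi)\Big)=[K]^-_\psi\big(E_{M,f\frac{D}{M},\psi}\big),
\]
where $f\mid M$, so $E_{M,f\frac{D}{M},\psi}$ is the Eisenstein series treated in Lemma~\ref{constant2}, and $K$ is coprime to the ``$r$''-part of every standard cusp representative since $r\mid\frac{D}{C}$ while $K\mid C$.

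Next I would record how a single operator $[p]^-_\psi$ with $p\mid K$ transforms constant terms at a standard cusp $[\frac{rs^2tx}{DC}]$. The reduced denominator $c=\frac{DC}{rs^2t}$ satisfies $v_p(c)=2-2v_p(s)-v_p(t)$, so $(p,c)=1$ exactly when $p\mid s$ and $p\mid c$ otherwise; one then uses the corresponding branch of the single-prime formula underlying Eq.~(4.4). Rewriting the cusp $[\frac{rs^2t(px)}{DC}]$ that appears in standard form by means of Lemma~\ref{change} and Eqs.~(4.1)--(4.2), one finds: if $p\mid s$ the triple $(r,s,t)$ is unchanged and only the residue of $x$ modulo $t$ is multiplied by $p$; if $p\mid t$ the triple becomes $(r,sp,t/p)$; and if $p\nmid st$ it becomes $(r,s,tp)$. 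Iterating over the primes of $K$, and decomposing $K=K_sK_tK_0$ with $K_s=(K,s)$, $K_t=(K,t)$ and $K_0=K/(K_sK_t)$ (coprime to $st$), one arrives at
\[
a_0\big(E_{M,L,\psi};[\tfrac{rs^2tx}{DC}]\big)=\sum_{\alpha_s\mid K_s,\ \alpha_t\mid K_t,\ \alpha_0\mid K_0}(-1)^{\nu(\alpha_s\alpha_t\alpha_0)}\,\psi(\alpha_s\alpha_t\alpha_0)\,\alpha_s^{-2}\;a_0\big(E_{M,f\frac{D}{M},\psi};[\tfrac{r(s\alpha_t)^2(t\alpha_0/\alpha_t)\,x_\alpha}{DC}]\big),
\]
with $x_\alpha\equiv\alpha_s x\pmod f$, using $\psi=\psi^{-1}$.

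Then I would substitute the formula of Lemma~\ref{constant2}. Because $\alpha_t\mid K_t$ and $\alpha_0\mid K_0$ are coprime to $f$ and to $\frac{D}{M}$, that lemma's non-vanishing conditions for the cusp $[\frac{r(s\alpha_t)^2(t\alpha_0/\alpha_t)x_\alpha}{DC}]$ become, uniformly in $\alpha_s,\alpha_t,\alpha_0$, the three conditions $(s,f)=1$, $f\mid t$, $\frac{D}{M}\mid rs$. If these fail, every summand is zero, which is the ``otherwise'' case. If they hold, then after simplifying $r(s\alpha_t)^2(t\alpha_0/\alpha_t)=rs^2t\alpha_t\alpha_0$, each summand becomes an explicit multiple of $a_0(E_{M,f\frac{D}{M},\psi};[\frac{rs^2tx}{DC}])$ in which the $(-1)^{\nu(\alpha_t)}$ and the $\psi(\alpha_s\alpha_t\alpha_0)$ contributions square to $1$; the triple sum then factors as
\[
\Big(\sum_{\alpha_s\mid K_s}(-1)^{\nu(\alpha_s)}\alpha_s^{-2}\Big)\Big(\sum_{\alpha_t\mid K_t}\alpha_t^{-1}\Big)\Big(\sum_{\alpha_0\mid K_0}(-1)^{\nu(\alpha_0)}\Big)=\prod_{p\mid K_s}\Big(1-\frac{1}{p^2}\Big)\prod_{p\mid K_t}\frac{p+1}{p}\prod_{p\mid K_0}0 .
\]
Thus the constant term vanishes unless $K_0=1$, i.e.\ unless $(M,L)=fK_sK_t\mid st$; and when $K_0=1$ one writes $\prod_{p\mid K_s}(1-\frac{1}{p^2})=\prod_{p\mid K_s}\frac{p+1}{p}\prod_{p\mid K_s}(1-\frac{1}{p})$, combines $\frac{\mu(D/M)}{D/M}\prod_{p\mid K_s}\frac{p+1}{p}\prod_{p\mid K_t}\frac{p+1}{p}=\frac{\mu(L/f)}{L/f}$, and recognises the residual factor $\prod_{p\mid K_s}(1-\frac1p)$ as $\prod_{p\mid(s,(M,L)/f)}(1-\frac1p)$ since $(s,K)=K_s$ when $K_0=1$; this yields exactly $n_\psi\cdot\frac{\varphi(D/f)\mu(L/f)}{L/f}\cdot c_{rstx}$.

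The step I expect to be the main obstacle is this last bookkeeping: one must keep track, across the threefold splitting $K=K_sK_tK_0$ and the nested divisor sums, of which cusps survive Lemma~\ref{constant2}, of how the signs $(-1)^{\nu(\cdot)}$ and the quadratic-character values shift as numerators are multiplied and cusps renormalised, and of how the $\mu$-factors and the $\prod(1-1/p)$-factors recombine. The argument is conceptually routine --- it is the exact analogue of how Lemma~\ref{constant2} was deduced from Lemma~\ref{constant1} --- but arithmetically delicate.
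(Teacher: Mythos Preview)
Your proposal is correct and follows essentially the same route as the paper: both peel off the operator $[\frac{(M,L)}{f}]^-_\psi$ from $E_{M,f\frac{D}{M},\psi}$, split $\frac{(M,L)}{f}$ into its $s$-, $t$-, and residual parts (your $K_s,K_t,K_0$ are the paper's $W_s,W_t,W$), apply Eq.~(4.4) branch by branch, and substitute Lemma~\ref{constant2}. Your write-up is in fact more explicit than the paper's on the final bookkeeping---the paper simply asserts that ``it is easy to derive the desired result from the previous lemma'' once the triple sum is reached---so the step you flag as the main obstacle is handled more carefully in your version than in the original.
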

\begin{proof}
We have already proved the assertion when $(M,L)=f$, so it remains to consider the case when $(M,L)\neq f$. Since $(M,L)\mid C$, $\frac{(M,L)}{f}$ can be decomposed as $\frac{(M,L)}{f}=W_s\cdot W_t\cdot W$ for any cusp $[\frac{rs^2tx}{DC}]$ of $X_0(DC)$, where $W_s:=(\frac{(M,L)}{f},s)$ and $W_t:=(\frac{(M,L)}{f},t)$. It then follows from Eq. (4.4) that
\begin{align*}
  a_0(E_{M,L,\psi};[\frac{rs^2tx}{DC}])&=\sum(-1)^{\nu(\alpha)}\cdot\psi(\alpha)\cdot a_0(E_{M,f\cdot\frac{D}{M}\cdot W_s\cdot W_t,\psi};[\frac{rs^2t\alpha x}{DC}])\\
  &=\sum(-1)^{\nu(\alpha\alpha_t)}\cdot\psi(\alpha\alpha_t)\cdot a_0(E_{M,f\cdot\frac{D}{M}\cdot W_s,\psi};[\frac{rs^2t\alpha\alpha_t x}{DC}])\\
  &=\sum(-1)^{\nu(\alpha\alpha_t\alpha_s)}\cdot\psi(\alpha\alpha_t\alpha_s)\cdot\alpha^{-2}_s a_0(E_{M,f\cdot\frac{D}{M},\psi};[\frac{rs^2t\alpha\alpha_t\alpha_s x}{DC}]),
\end{align*}
where $\alpha_s,\alpha_t$ and $\alpha$ runs over all positive divisors of $W_s,W_t$ and $W$ respectively. As a cusp of $X_0(DC)$, we have
\begin{align*}
[\frac{rs^2t\alpha\alpha_t\alpha_s x}{DC}]=[\frac{r(s\alpha_t)^2(\frac{t\alpha}{\alpha_t})(\alpha_s x+\frac{DC}{\alpha^2_s})}{DC}]
\end{align*}
with $(\alpha_s x+\frac{DC}{\alpha^2_s},D)=1$, and $\alpha_s x+\frac{DC}{\alpha^2_s}\equiv \alpha_sx\pmod{f}$ because $(\alpha_s,f)=1$. So we find by Lemma~\ref{constant2} that
\begin{align*}
  a_0(E_{M,f\cdot\frac{D}{M},\psi};[\frac{rs^2t\alpha\alpha_t\alpha_s x}{DC}])=(-1)^{\nu(\alpha_t)}\cdot\psi(\alpha\alpha_t\alpha_s)\cdot\alpha_t^{-1}\cdot a_0(E_{M,f\frac{D}{M},\psi};[\frac{rs^2tx}{DC}]),
\end{align*}
and hence
\[a_0(E_{M,L,\psi};[\frac{rs^2tx}{DC}])=\sum(-1)^{\nu(\alpha\alpha_s)}\cdot\alpha^{-1}_t\cdot\alpha^{-2}_s a_0(E_{M,f\cdot\frac{D}{M},\psi};[\frac{rs^2tx}{DC}]).\]
Thus, the constant term is zero unless $\frac{D}{M}\mid rs,(s,f)=1,f\mid t\text{ and }W=1$, or equivalently, $\frac{D}{M}\mid rs,(s,f)=1\text{ and }(M,L)\mid st$. If these conditions are satisfied, then it is easy to derive the desired result from the previous lemma.
\end{proof}

\begin{cor}\label{R}For any quadratic character $\psi$ of conductor $f$, we have that
\[\CR_{\Gamma_0(DC)}(E_{M,L,\psi})=n_\psi\cdot\frac{\varphi(\frac{D}{f})\cdot\mu(\frac{L}{f})\cdot(\frac{D}{M},C)}{L/f}\BZ\]
and
\[\CR_{\Gamma_1(DC)}(E_{M,L,\psi})=n_\psi\cdot\frac{\varphi(\frac{D}{f})\cdot\mu(\frac{L}{f})\cdot(\frac{D}{M},C)\cdot f}{L/f}\BZ.\]
\end{cor}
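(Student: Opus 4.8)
The plan is to unwind the definition of $\CR_\Gamma(E_{M,L,\psi})$ and substitute the constant-term formula of Proposition~\ref{constant3}. By definition $\CR_\Gamma(E_{M,L,\psi})$ is generated by the coefficients of $\delta_\Gamma(E_{M,L,\psi})$, which by the relation recalled in \S2.2 are the numbers $e_x\cdot a_0(E_{M,L,\psi};[x])$ with $x$ running over the cusps of $X_\Gamma$, where $e_x$ is the ramification index of $x$, that is, the width of the cusp $x$ in $X_\Gamma$. So the first step is to compute these widths. Using the representatives $[\frac{rs^2tx}{DC}]$ of Lemma~\ref{representives}, whose reduced denominator is $c=DC/(rs^2t)$ and for which $(rs^2t,DC/(rs^2t))=t$, the standard formula ``the width of $[\frac{a}{c}]$ with $c\mid N$ is $N/\gcd(N,c^2)$ in $X_0(N)$ and $N/\gcd(N,c)$ in $X_1(N)$'' gives width $rs^2$ in $X_0(DC)$ and width $rs^2t$ for any cusp of $X_1(DC)$ lying over $[\frac{rs^2tx}{DC}]$. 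Hence such a cusp has ramification index $t$ over $[\frac{rs^2tx}{DC}]$ in $\pi_{DC}\colon X_1(DC)\to X_0(DC)$; since $\omega_{E_{M,L,\psi}}$ on $X_1(DC)$ is $\pi_{DC}^{*}$ of $\omega_{E_{M,L,\psi}}$ on $X_0(DC)$ and residues multiply by ramification indices under pullback, the coefficient of that cusp in $\delta_{\Gamma_1(DC)}(E_{M,L,\psi})$ equals $t$ times the coefficient of $[\frac{rs^2tx}{DC}]$ in $\delta_{\Gamma_0(DC)}(E_{M,L,\psi})$.

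Substituting Proposition~\ref{constant3}, the nonzero coefficients of $\delta_{\Gamma_0(DC)}(E_{M,L,\psi})$ are
\[
  n_\psi\cdot\frac{\varphi(D/f)\,\mu(L/f)}{L/f}\cdot(-1)^{\nu(D/(frs))}\cdot\psi\!\left(\tfrac{DC}{frs^2tx}\right)\cdot v(s),
\]
where $v(s):=s\prod_{p\mid(s,(M,L)/f)}(1-\tfrac1p)$, taken over the cusps with $(s,f)=1$, $(M,L)\mid st$ and $(D/M)\mid rs$ (which force $f\mid t$), while those of $\delta_{\Gamma_1(DC)}(E_{M,L,\psi})$ carry an extra factor $t$. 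Because $\psi$ is quadratic each $\psi$-value is $0$ or $\pm1$, so the sign and character factors are irrelevant to the $\BZ$-module they generate, and the corollary reduces to
\[
  \CR_{\Gamma_0(DC)}(E_{M,L,\psi})=n_\psi\,\tfrac{\varphi(D/f)\mu(L/f)}{L/f}\,G_0\,\BZ,\qquad
  \CR_{\Gamma_1(DC)}(E_{M,L,\psi})=n_\psi\,\tfrac{\varphi(D/f)\mu(L/f)}{L/f}\,G_1\,\BZ,
\]
with $G_0=\gcd\{v(s)\}$ and $G_1=\gcd\{t\,v(s)\}$ over the admissible cusps at which the $\psi$-factor is nonzero; the claim becomes $G_0=(D/M,C)$ and $G_1=f\cdot(D/M,C)$.

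Note that $v(s)=\prod_{p\mid s}c_p$ with $c_p=p-1$ if $p\mid(M,L)/f$ and $c_p=p$ otherwise. For one inequality: every prime $p\mid(D/M,C)$ divides $C$, hence not $D/C$, hence (as $r\mid D/C$) not $r$; since $(D/M)\mid rs$ it divides $s$; and $p\mid D/M$ implies $p\nmid M\supseteq(M,L)$, so $p\nmid(M,L)/f$ and $p$ contributes the factor $p$ to $v(s)$. Thus $(D/M,C)\mid v(s)$ for every admissible $s$; and since $f\mid t$ always holds and $f\mid M$ forces $\gcd(f,(D/M,C))=1$, we also get $f(D/M,C)\mid t\,v(s)$. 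For the other inequality: writing $P$ for the set of prime divisors of $(M,L)/f$, for each subset $S\subseteq P$ take $r=(D/M,D/C)$, $s=(D/M,C)\prod_{q\in S}q$, $t=f\prod_{q\notin S}q$ and any $x$ prime to $D$. One checks this is an admissible cusp: the relations $(s,t)=1$, $s,t\mid C$, $(M,L)\mid st$, $(D/M)\mid rs$, $(s,f)=1$ all hold since $D$ is square-free and $(M,L)\mid C$ (the latter because $ML\mid DC$ and $D$ is square-free), and $\tfrac{DC}{frs^2tx}$ is prime to $f$, so the $\psi$-factor does not vanish. Here $v(s)=(D/M,C)\prod_{q\in S}(q-1)$ and $t\,v(s)=f(D/M,C)\prod_{q\in S}(q-1)\prod_{q\notin S}q$. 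Since $\gcd_{S\subseteq P}\big(\prod_{q\in S}(q-1)\prod_{q\notin S}q\big)=1$ (for a prime $\ell$: if $\ell\in P$ take $\ell\in S$ and every other prime out of $S$, and if $\ell\notin P$ take $S=\emptyset$), taking the gcd over $S$ gives $G_1\mid f(D/M,C)$, and the choice $S=\emptyset$ already gives $G_0\mid(D/M,C)$. Combined with the previous paragraph this yields $G_0=(D/M,C)$ and $G_1=f(D/M,C)$, which are the two displayed formulas of the corollary.

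The part I expect to be the main obstacle is the first step: pinning down the cusp widths / ramification indices in both $X_0(DC)$ and $X_1(DC)$ correctly — in particular the fact that every cusp of $X_1(DC)$ lying over a fixed $\Gamma_0(DC)$-cusp has the same ramification index $t$ — together with the bookkeeping needed to confirm that the explicit tuples $(r,s,t,x)$ produced in the last step are genuinely admissible cusps with non-vanishing $\psi$-factor, so that the two gcds really equal $(D/M,C)$ and $f(D/M,C)$ rather than proper divisors of them.
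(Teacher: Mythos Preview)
Your proof is correct and follows exactly the approach the paper takes: combine the constant-term formula of Proposition~\ref{constant3} with the cusp widths $e_{[\frac{rs^2tx}{DC}]}=rs^2$ in $X_0(DC)$ and $rs^2t$ in $X_1(DC)$, then read off the $\BZ$-module generated by the resulting coefficients. The paper's own proof is a single sentence stating precisely these two ramification indices and declaring the result immediate; your write-up simply supplies the gcd computation (the identities $G_0=(D/M,C)$ and $G_1=f\cdot(D/M,C)$) and the admissibility checks that the paper leaves to the reader. Your worry about all cusps of $X_1(DC)$ over a fixed $\Gamma_0(DC)$-cusp sharing the same width is harmless: $\Gamma_1(DC)$ is normal in $\Gamma_0(DC)$, so the cover is Galois and the fibers are uniform.
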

\begin{proof}
This follows immediately from the above result about constant terms, since the ramification index of $X_0(DC)$ at the cusp $[\frac{rs^2tx}{DC}]$ equals to $rs^2$, and the ramification index of $X_1(DC)$ at a cusp over $[\frac{sr^2tx}{DC}]$ equals to $rs^2t$.
\end{proof}

\textbf{4.3. The periods of $E_{M,L,\psi}$.} Now we turn to the determination of the periods of the Eisenstein series $E_{M,L,\psi}$ with $\psi$ being a quadratic character.
\begin{lem}\label{Fourier'}
For any quadratic character $\psi$ of conductor $f$, the Fourier expansion of $E_{D,f,\psi}$ at $[\infty]$ is given as
\begin{align*}
  E_{D,f,\psi}=a_0(E_{D,f,\psi};[\infty])+\sum^{\infty}_{n=1}\sigma_{\frac{D}{f}}(n)\cdot\psi(n)\cdot \fq^n,
\end{align*}
with $\sigma_{\frac{D}{f}}(n):=\sum_{1\leq d\mid n,(d,\frac{D}{f})=1}d$ for any positive integer $n$.
\end{lem}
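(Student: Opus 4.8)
The plan is to unwind the definition of $E_{D,f,\psi}$ and compute directly with $\fq$-expansions. By Definition~\ref{key definition}, since here $L=f$ we have $L/f=1$, so $[L/f]^{-}_\psi$ is the identity and $E_{D,f,\psi}=[D/f]^{+}_\psi(E_\psi)$. I would begin from the Fourier expansion of $E_\psi$ recorded in Eq.~(3.1), namely $E_\psi=-\frac{\delta_\psi}{4\pi i(z-\overline{z})}+a_0(E_\psi;[\infty])+\sum_{n\geq1}\sigma_\psi(n)\,\fq^n$ with $\sigma_\psi(n)=\sum_{d\mid n}d\,\psi(d)\,\psi^{-1}(n/d)$. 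The first simplification is that, because $\psi$ is quadratic, $\psi^{-1}=\psi$, and because $\psi$ is a Dirichlet character it is completely multiplicative on $\BZ$ (with the usual convention $\psi(n)=0$ when $(n,f)\neq1$); hence $\psi(d)\psi(n/d)=\psi(n)$ for every $d\mid n$, so that $\sigma_\psi(n)=\psi(n)\sum_{d\mid n}d=\psi(n)\,\sigma(n)$, where $\sigma(n):=\sum_{d\mid n}d$.

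Next I would describe how $[p]^{+}_\psi$ acts on $\fq$-expansions: from $[p]^{+}_\psi(g)(z)=g(z)-p\,\psi(p)\,g(pz)$ one sees that $[p]^{+}_\psi\big(\sum_n a_n\fq^n\big)=\sum_n(a_n-p\,\psi(p)\,a_{n/p})\fq^n$, where $a_{n/p}:=0$ when $p\nmid n$. Since $D$ is square-free the integer $D/f$ is square-free, and the operators $[p]^{+}_\psi$ over the primes $p\mid D/f$ commute with one another, so $[D/f]^{+}_\psi$ is their composite in any order. I would then prove, by induction on the number of prime factors of a square-free divisor $K\mid D/f$, that the $n$-th Fourier coefficient of $[K]^{+}_\psi(E_\psi)$ equals $\psi(n)\,\sigma_K(n)$ for $n\geq1$, where $\sigma_K(n):=\sum_{d\mid n,\,(d,K)=1}d$. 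The base case $K=1$ is the computation of the previous paragraph. For the inductive step, passing from $K$ to $pK$ with $p\mid D/f$ and $p\nmid K$, applying $[p]^{+}_\psi$ and using $\psi(p)\psi(n/p)=\psi(n)$ turns the $n$-th coefficient into $\psi(n)\big(\sigma_K(n)-p\,\sigma_K(n/p)\big)$; writing $n=p^a m$ with $p\nmid m$ one has $\sigma_K(n)=\tfrac{p^{a+1}-1}{p-1}\,\sigma_K(m)$ and $\sigma_K(n/p)=\tfrac{p^{a}-1}{p-1}\,\sigma_K(m)$ when $a\geq1$, whence $\sigma_K(n)-p\,\sigma_K(n/p)=\sigma_K(m)=\sigma_{pK}(n)$ (and the case $p\nmid n$ is immediate), which completes the induction.

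Finally I would collect the remaining terms. The non-holomorphic term $-\frac{\delta_\psi}{4\pi i(z-\overline{z})}$ of $E_\psi$ is annihilated by $[D/f]^{+}_\psi$: if $D/f>1$ this is Remark~\ref{zero}, while if $D/f=1$ then $f=D$, and since $D>1$ (the case $D=1$ being vacuous) the character $\psi$ is non-trivial, so $\delta_\psi=0$ and the term is absent. The constant term of $[D/f]^{+}_\psi(E_\psi)$ at $[\infty]$ is, by definition, $a_0(E_{D,f,\psi};[\infty])$. Combining these with the case $K=D/f$ of the induction yields $E_{D,f,\psi}=a_0(E_{D,f,\psi};[\infty])+\sum_{n\geq1}\sigma_{D/f}(n)\,\psi(n)\,\fq^n$, which is the assertion. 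I do not expect any genuine obstacle in this argument: the only points deserving care are the two reductions via complete multiplicativity of $\psi$ and the elementary identity $\sigma_K(n)-p\,\sigma_K(n/p)=\sigma_{pK}(n)$, after which the proof is routine bookkeeping with divisor sums.
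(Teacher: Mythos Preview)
Your proof is correct and follows essentially the same route as the paper: induction on the number of prime factors of $D/f$, starting from the simplification $\sigma_\psi(n)=\psi(n)\sigma(n)$ for quadratic $\psi$, and the key divisor-sum identity $\sigma_K(n)-p\,\sigma_K(n/p)=\sigma_{pK}(n)$, together with Remark~\ref{zero} to kill the non-holomorphic term. Your treatment of the edge case $D/f=1$ (where $\delta_\psi=0$ since $\psi$ is non-trivial) is slightly more explicit than the paper's, but otherwise the arguments coincide.
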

\begin{proof}
We prove the statement by induction on $\nu(\frac{D}{f})$. Because $\psi$ is quadratic, it follows from Eqs. (3.1) and (3.3) that $a_n(E_\psi;[\infty])=(\sum_{1\leq d\mid n}d)\cdot\psi(n)$ for any $n\geq1$, which verifies the assertion if $D=f$. Suppose $\frac{D}{f}\neq1$ and let $p$ be an arbitrary prime divisor of it. Because the non-holomorphic terms is annihilated by $[p]^+_\psi$ (see Remark~\ref{zero}), it follows from the induction hypothesis that
\begin{align*}
  E_{D,f,\psi}&=[p]^+_\psi(E_{\frac{D}{p},f,\psi})\\
  &=\left(a_0(E_{\frac{D}{p},f,\psi})+\sum^{\infty}_{n=1}\sigma_{\frac{D}{fp}}(n)\cdot\psi(n)\cdot \fq^n\right)-p\cdot\psi(p)\cdot\left(a_0(E_{\frac{D}{p},f,\psi})+\sum^{\infty}_{n=1}\sigma_{\frac{D}{fp}}(n)\cdot\psi(n)\cdot \fq^{pn}\right)\\
  &=a_0(E_{D,f,\psi})+\sum^{\infty}_{n=1}\left(\sigma_{\frac{D}{pf}}(n)-p\cdot\sigma_{\frac{D}{pf}}(n/p)\right)\cdot\psi(n)\cdot \fq^n,
\end{align*}
with $\frac{n}{p}$ defined to be $0$ if $p\nmid n$. It is easy to see that $\sigma_{\frac{D}{pf}}(n)-p\cdot\sigma_{\frac{D}{pf}}(\frac{n}{p})=\sigma_{\frac{D}{f}}(n)$ for any positive integer $n$ and so we are done.
\end{proof}

\begin{lem}\label{Fourier}
For any quadratic character $\psi$ of conductor $f$, the Fourier expansion of $E_{M,L,\psi}$ at $[\infty]$ is given as
\[E_{M,L,\psi}=a_0(E_{M,L,\psi})+\sum^{\infty}_{n=1}\sigma_{M,L}(n)\cdot\psi(n)\cdot \fq^n,\]
where $\sigma_{M,L}(n)$ is defined to be $(\sum_{1\leq d\mid n,(d,\frac{D}{f})=1}d)\cdot(\prod_{\ell\mid\frac{D}{M}}\ell^{v_\ell(n)})$ or zero according to $n$ is prime to $(M,L)$ or not.
\end{lem}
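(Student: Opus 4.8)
The plan is to read off the expansion by pushing the two operators $[\frac{M}{f}]^+_\psi$ and $[\frac{L}{f}]^-_\psi$ through the $\fq$-expansion at $\infty$, starting from $E_\psi$. Note first that $f=f_\psi$ divides $(M,L)$ (since $\psi$ is a character modulo $(M,L)$), so $\frac{M}{f}$ and $\frac{L}{f}$ are integers; and the assertion about the constant term is just a name for $a_0(E_{M,L,\psi})$, whose value is supplied by Proposition~\ref{constant3}. So it suffices to identify the $n$-th coefficient for $n\ge 1$.

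The elementary input is the action of $[p]^{\pm}_\psi$ on Fourier coefficients: if $g=\sum_{n\ge 0}a_n\fq^n$, then directly from $[p]^+_\psi(g)(z)=g(z)-p\psi(p)g(pz)$ and $[p]^-_\psi(g)(z)=g(z)-\psi^{-1}(p)g(pz)$ the $n$-th coefficient of $[p]^+_\psi(g)$ is $a_n-p\psi(p)a_{n/p}$ and that of $[p]^-_\psi(g)$ is $a_n-\psi^{-1}(p)a_{n/p}=a_n-\psi(p)a_{n/p}$, with $a_{n/p}:=0$ when $p\nmid n$ and using that $\psi$ is quadratic. Starting from $E_\psi$, whose $n$-th coefficient ($n\ge 1$) is $\psi(n)\sigma(n)$ with $\sigma$ the sum-of-divisors function (by Eqs.~(3.1), (3.3) and $\psi$ quadratic), the very argument of Lemma~\ref{Fourier'} — only with $M$ in place of $D$, inducting on $\nu(\frac{M}{f})$ via $E_{M,f,\psi}=[p]^+_\psi(E_{M/p,f,\psi})$ and the identity $\sigma_{\frac{M}{fp}}(n)-p\,\sigma_{\frac{M}{fp}}(n/p)=\sigma_{\frac{M}{f}}(n)$ — shows that $E_{M,f,\psi}:=[\frac{M}{f}]^+_\psi(E_\psi)$ has $n$-th coefficient $\psi(n)\sigma_{\frac{M}{f}}(n)$, where $\sigma_{\frac{M}{f}}(n)=\sum_{1\le d\mid n,\ (d,\frac{M}{f})=1}d$ (this step uses nothing about $(M,L,\psi)\in\CH(DC)$).

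Next apply $[\frac{L}{f}]^-_\psi=\prod_{p\mid\frac{L}{f}}[p]^-_\psi$ to $E_{M,f,\psi}$. Since every coefficient arising along the way has the form $\psi(n)\cdot b_n$ with $b$ multiplicative in $n$ and $\psi$ completely multiplicative (so $\psi(p)\psi(n/p)=\psi(n)$), each $[p]^-_\psi$ just replaces $b$ by $b_n\mapsto b_n-[p\mid n]\,b_{n/p}$; iterating over the primes of $\frac{L}{f}$ gives that $E_{M,L,\psi}$ has $n$-th coefficient $\psi(n)\,\widetilde\sigma(n)$ with $\widetilde\sigma(n)=\sum_{\delta\mid(n,\,\frac{L}{f})}\mu(\delta)\,\sigma_{\frac{M}{f}}(n/\delta)$, again multiplicative in $n$. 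It then remains to check $\widetilde\sigma(n)=\sigma_{M,L}(n)$. As $\psi(n)=0$ whenever $(n,f)>1$, we may assume $(n,f)=1$, and since both $\widetilde\sigma$ and the claimed $\sigma_{M,L}$ are multiplicative it suffices to compare them on prime powers $p^k$ with $p\nmid f$. Here one splits into four cases — $p\nmid D$; $p\mid M$ and $p\mid L$; $p\mid M$ and $p\nmid L$; $p\nmid M$ and $p\mid L$ — the last disjunction being exhaustive because $D\mid ML$ with $D$ square-free. Using that (again from $D\mid ML$, $D$ square-free, $f\mid(M,L)$) a prime $p\nmid f$ divides $\frac{D}{f}$ iff it divides $M$ or $L$, and divides $\frac{D}{M}$ iff it divides $L$ but not $M$, one gets $\widetilde\sigma(p^k)=\sigma_{M,L}(p^k)$ in each case (both equal $\sigma(p^k)$, resp. $[k=0]$, resp. $1$, resp. $p^k$), and moreover both vanish once $p\mid(M,L)$ and $p\mid n$; combined with $\psi(n)=0$ for $(n,f)>1$, this gives the vanishing of the coefficient when $(n,(M,L))\ne 1$, finishing the proof.

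I expect the only genuine work to be this last, purely combinatorial, verification $\widetilde\sigma=\sigma_{M,L}$: it amounts to matching the not-very-transparent closed form $\sigma_{M,L}(n)=\big(\sum_{d\mid n,(d,\frac{D}{f})=1}d\big)\cdot\prod_{\ell\mid\frac{D}{M}}\ell^{v_\ell(n)}$ (valid for $(n,(M,L))=1$) against a Möbius convolution, which means carefully sorting each prime by its position relative to $f$, $M$, $L$ and $D$; everything else is an unwinding of definitions. A cleaner equivalent packaging is via Dirichlet series: the operators multiply the Dirichlet series of the holomorphic part by $\prod_{p\mid\frac{M}{f}}(1-\psi(p)p^{1-s})$ and $\prod_{p\mid\frac{L}{f}}(1-\psi(p)p^{-s})$, so from $\sum_{n\ge1}\psi(n)\sigma(n)n^{-s}=L(s,\psi)L(s-1,\psi)$ one gets $\sum_{n\ge 1}a_n(E_{M,L,\psi})n^{-s}=L_{\frac{L}{f}}(s,\psi)\,L_{\frac{M}{f}}(s-1,\psi)$ with $L_k(s,\psi):=\sum_{(n,k)=1}\psi(n)n^{-s}$, and reading off coefficients together with $D\mid ML$ yields the claimed formula.
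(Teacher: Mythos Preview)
Your proposal is correct and follows essentially the same strategy as the paper: track the $\fq$-expansion through the operators $[p]^{\pm}_\psi$ one prime at a time, starting from $E_\psi$. The paper first invokes Lemma~\ref{Fourier'} (for the $[\,\cdot\,]^+_\psi$ part) and then peels off the $[p]^-_\psi$ factors in two inductive stages, first over the primes of $\frac{D}{M}$ and then over those of $\frac{(M,L)}{f}$, verifying at each step the identity $\sigma_{M,L'}(n)-\sigma_{M,L'}(n/p)=\sigma_{M,L'p}(n)$ directly. You instead apply all of $[\frac{L}{f}]^-_\psi$ at once via the M\"obius convolution $\widetilde\sigma=\sigma_{M/f}*\mu_{L/f}$ and then match $\widetilde\sigma$ with the closed form $\sigma_{M,L}$ by a prime-power case analysis (and, equivalently, via the Euler-product/Dirichlet-series identity). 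This is only a repackaging of the same computation, not a different argument; your Dirichlet-series formulation is arguably cleaner and in fact reappears verbatim in the paper's proof of Proposition~\ref{P}.
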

\begin{proof}
We first consider the case when $(M,L)=f$ so that $E_{M,L,\psi}=E_{M,f\cdot\frac{D}{M},\psi}$. We will prove the lemma in this situation by induction on $\nu(\frac{D}{M})$. If $\frac{D}{M}=1$, then the assertion have already been verified in the previous lemma.
If $\frac{D}{M}>1$ and let $p$ be an arbitrary prime divisor of it. Then it follows form the induction hypothesis that
\begin{align*}
E_{M,f\cdot\frac{D}{M},\psi}&=[p]^-_{\psi}(E_{M,f\cdot\frac{D}{pM},\psi})\\
&=a_0(E_{M,f\cdot\frac{D}{M},\psi};[\infty])+\sum^{\infty}_{n=1}\left(\sigma_{M,f\cdot\frac{D}{pM}}(n)-\sigma_{M,f\cdot\frac{D}{pM}}(n/p)\right)\cdot\psi(n)\cdot \fq^n
\end{align*}
Writing $n$ as $m\cdot p^{v_p(n)}$ with $(m,p)=1$, then we find that
\begin{align*}
  &\sigma_{M,f\frac{D}{pM}}(n)-\sigma_{M,f\frac{D}{pM}}(n/p)\\
  &=(p^{v_p(n)}+...+1)\cdot\sigma_{M,f\cdot\frac{D}{pM}}(m)-(p^{v_p(n)-1}+...+1)\cdot\sigma_{M,f\cdot\frac{D}{pM}}(m)\\
  &=p^{v_p(n)}\cdot\sigma_{M,f\cdot\frac{D}{pM}}(n),
\end{align*}
which proves the assertion in this case. In general, if $(M,L)\neq1$, then we choose an arbitrary prime divisor $p\mid (M,L)\mid C$ and find that
\begin{align*}
  E_{M,L,\psi}&=[p]^-_{\psi}(E_{M,\frac{L}{p},\psi})\\
  &=a_0(E_{M,L,\psi};[\infty])+\sum^{\infty}_{n=1}\left(\sigma_{M,\frac{L}{p}}(n)-\sigma_{M,\frac{L}{p}}(n/p)\right)\cdot\psi(p)\cdot e^{2\pi inz}.
\end{align*}
We have thus complete the proof of the lemma since it is easy to see that $\sigma_{M,\frac{L}{p}}(n)-\sigma_{M,\frac{L}{p}}(n/p)=0$ if $p\mid n$.
\end{proof}

\begin{prop}\label{P}
For any quadratic character $\psi$ of conductor $f$, we have $\CP_{\Gamma_1(DC)}(E_{M,L,\psi})=\frac{g(\psi)}{L}\BZ+\CR_{\Gamma_1(DC)}(E_{M,L,\psi})$.
\end{prop}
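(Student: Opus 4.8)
The proof will run through Stevens' criterion as recalled in Section~2. Since $\psi$ is quadratic we have $g(\psi)^{2}=\psi(-1)f\in\BZ$, so $n_\psi\in g(\psi)\cdot\BQ$; hence by Corollary~\ref{R} both $\CR_{\Gamma_1(DC)}(E_{M,L,\psi})$ and the candidate module
\[
\CM:=\frac{g(\psi)}{L}\BZ+\CR_{\Gamma_1(DC)}(E_{M,L,\psi})
\]
are finitely generated $\BZ$-submodules of the $\BQ$-line $g(\psi)\BQ\subset\BC$, and $\CP_{\Gamma_1(DC)}(E_{M,L,\psi})$ is the smallest finitely generated $\BZ$-module $\CM'$ for which condition (St2) holds. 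The plan is therefore twofold: \emph{(a)} verify that $\CM$ satisfies (St2), which by (St2)$\Rightarrow$(St1) gives $\CP_{\Gamma_1(DC)}(E_{M,L,\psi})\subseteq\CM$; and \emph{(b)} show that no proper submodule of $\CM$ containing $\CR_{\Gamma_1(DC)}(E_{M,L,\psi})$ satisfies (St2), which together with the inclusion $\CR_{\Gamma_1(DC)}(E_{M,L,\psi})\subseteq\CP_{\Gamma_1(DC)}(E_{M,L,\psi})$ (the Manin--Drinfeld remark of Section~2) yields the reverse inclusion.

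Both parts rest on an explicit evaluation of $\Lambda_{\pm}(E_{M,L,\psi},\chi,1)$ for $\chi\in\fX_{DC}$. Using the Fourier expansion of $E_{M,L,\psi}$ from Lemma~\ref{Fourier} and the eigenvalue list of Proposition~\ref{eigen}, the twisted $L$-series is an Euler product
\[
L(E_{M,L,\psi},\chi,s)=L(s,\psi\chi)\,L(s-1,\psi\chi)\!\!\prod_{\ell\mid\frac{M}{(M,L)}}\!\!\bigl(1-(\psi\chi)(\ell)\ell^{1-s}\bigr)\!\!\prod_{\ell\mid\frac{L}{(M,L)}}\!\!\bigl(1-(\psi\chi)(\ell)\ell^{-s}\bigr)\!\!\prod_{\substack{\ell\mid(M,L)\\ \ell\nmid f}}\!\!\bigl(1-(\psi\chi)(\ell)\ell^{-s}\bigr)\bigl(1-(\psi\chi)(\ell)\ell^{1-s}\bigr),
\]
where $\psi\chi$ is a \emph{non-quadratic} primitive character of conductor $fp_\chi$ (or $p_\chi$ if $\psi=1$) and $L(\cdot,\psi\chi)$ is the primitive Dirichlet $L$-function. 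Next I would specialise at $s=1$: because $\chi$ is odd (its conductor $p_\chi\equiv-1\pmod 4$) and $\chi,\ \chi\cdot(\frac{}{p_\chi})$ differ by the odd Legendre symbol, exactly one of $\psi\chi$ and $\psi\chi\cdot(\frac{}{p_\chi})$ is odd, and for the even one the factor $L(0,\cdot)=-B_{1,\cdot}$ vanishes; thus $\Lambda(E_{M,L,\psi},\chi,1)$ keeps only the contribution of the odd twist $\eta$. Inserting $L(0,\eta)=-B_{1,\eta}$, the standard functional-equation formula expressing $L(1,\eta)$ through $\tau(\eta)$ and $B_{1,\bar\eta}$, the factorisation $\tau(\psi\chi)=\psi(p_\chi)\chi(f)\,g(\psi)\,\tau(\chi)$, and $\tau(\chi)\tau(\bar\chi)=\chi(-1)p_\chi$, the prefactor $\tau(\bar\chi)/2\pi i$ in the definition of $\Lambda$ cancels all transcendental terms, and one computes
\[
\Lambda_{\pm}(E_{M,L,\psi},\chi,1)=\varepsilon_\chi\cdot\frac{g(\psi)}{L}\cdot B_{1,\eta}\,B_{1,\bar\eta}\cdot\Bigl(\tfrac{L}{f}\,\mathrm{Eul}_1(\eta)\Bigr),
\]
where $\varepsilon_\chi$ is a unit of $\BZ[\chi][\tfrac12]$ and $\mathrm{Eul}_1(\eta)$ is the value at $s=1$ of the three finite products above; the point is that the denominator of $\mathrm{Eul}_1(\eta)$ is exactly $\frac{L}{(M,L)}\cdot\frac{(M,L)}{f}=\frac{L}{f}$, so $\frac{L}{f}\,\mathrm{Eul}_1(\eta)\in\BZ[\chi]$.

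Granting this formula, part \emph{(a)} follows: the generalised Bernoulli numbers $B_{1,\eta}$, $B_{1,\bar\eta}$ are $\ell$-integral for every $\ell\nmid f_\eta=fp_\chi$, and the classical integrality and divisibility properties recalled in Theorem~4.2 of~\cite{St2} show that, after forming the $\Lambda_{\pm}$-combination, the residual denominators at primes dividing $f$ (and at $2$) are already accounted for by $\CR_{\Gamma_1(DC)}(E_{M,L,\psi})$; hence $\Lambda_{\pm}(E_{M,L,\psi},\chi,1)\in\CM[\chi,\tfrac1{p_\chi}]$ and $\CP_{\Gamma_1(DC)}(E_{M,L,\psi})\subseteq\CM$. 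For part \emph{(b)}, since $\CM/\CR_{\Gamma_1(DC)}(E_{M,L,\psi})$ is cyclic it suffices to show, for each prime $q$ dividing its order (necessarily $q\mid L$), that some $\chi$ makes $\Lambda_{\pm}(E_{M,L,\psi},\chi,1)$ escape the index-$q$ submodule; this I would obtain from the displayed formula by choosing, via Dirichlet's theorem, $p_\chi$ and $\chi$ so that the relevant $B_{1,\eta}$ is a unit modulo a chosen prime above $q$ (a standard non-vanishing statement), whereupon the $1/q$ supplied by $\mathrm{Eul}_1(\eta)$ is genuinely present. By the equivalence (St1)$\Leftrightarrow$(St3) this pins down $\CP_{\Gamma_1(DC)}(E_{M,L,\psi})=\CM$.

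The main obstacle is the computation in the second paragraph and, inside it, the bookkeeping that forces the denominator to be exactly $L$ --- the interplay of the factor $g(\psi)/f$ coming from $\tau(\psi\chi)$, the $1/(M,L)$ produced by the factors $1-(\psi\chi)(\ell)\ell^{-s}$ at the primes $\ell\mid\frac{L}{(M,L)}$, and the residual $1/\bigl((M,L)/f\bigr)$ from the primes $\ell\mid(M,L)$ with $\ell\nmid f$, all of which must combine into precisely $g(\psi)/L$ --- together with keeping track of the parity of $\psi\chi$, which decides which of $\Lambda_+$, $\Lambda_-$ is the non-zero member entering the argument. Verifying these cancellations, and confirming that the only denominators escaping $\CR_{\Gamma_1(DC)}(E_{M,L,\psi})$ are powers of $p_\chi$, is essentially the entire content of the proposition.
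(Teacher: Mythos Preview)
Your overall strategy --- compute the twisted $L$-function as an Euler product, evaluate $\Lambda(E_{M,L,\psi},\chi,1)$ explicitly, then apply Stevens' criterion in both directions --- is exactly the paper's, and your Euler product (grouped by $(M,L)$ rather than by $f$) is equivalent to the one the paper writes down.

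The real gap is in part~\emph{(b)}. The assertion that a prime $q$ dividing $|\CM/\CR_{\Gamma_1(DC)}(E_{M,L,\psi})|$ ``necessarily'' divides $L$ is false: already for $\psi=1$, $M=D$, $L=1$ one has $\CR=\tfrac{\varphi(D)}{24}\BZ$ and $\CM=\BZ+\CR$, so $|\CM/\CR|=\varphi(D)/\gcd(24,\varphi(D))$, which has prime factors not dividing $L=1$. Relatedly, the phrase ``the $1/q$ supplied by $\mathrm{Eul}_1(\eta)$'' is confused: by your own formula $\tfrac{L}{f}\,\mathrm{Eul}_1(\eta)\in\BZ[\chi]$, so the full denominator $L$ already sits in the explicit factor $g(\psi)/L$, not in $\mathrm{Eul}_1$. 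Finally, to make $\tfrac{L}{g(\psi)}\Lambda$ a $q$-adic unit you must also arrange that each finite Euler factor $1-(\psi\chi)(\ell)$ (for $\ell\mid M/f$) and $\ell-(\psi\chi)(\ell)$ (for $\ell\mid L/f$) is a $q$-adic unit; you do not address this.

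The paper avoids all of these issues by reversing the inclusion differently: it shows directly that $\tfrac{g(\psi)}{L}\BZ\subseteq\CP_{\Gamma_1(DC)}(E_{M,L,\psi})$ (which with $\CR\subseteq\CP$ gives $\CM\subseteq\CP$). For \emph{every} prime $q$, one fixes $p'\in S_{DC}$ with $p'\neq q$ and notes that for all but finitely many $\chi\in\fX^{\infty}_{DC}$ of $p'$-power conductor the finite Euler factors are $q$-adic units; Theorem~4.2(c) of~\cite{St2} then guarantees that $B_{1,\chi\psi}B_{1,\overline{\chi\psi}}$ is a $q$-adic unit for infinitely many such $\chi$, so $\tfrac{L}{g(\psi)}\Lambda(E_{M,L,\psi},\chi,1)$ is a $q$-adic unit. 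Since $\Lambda\in\CP[\chi,1/p_\chi]$, this forces the $q$-adic valuation of a generator of $\CP$ to be at most that of $g(\psi)/L$, for every $q$; hence $\tfrac{g(\psi)}{L}\in\CP$. Working with (St3) (prime-power conductors) rather than (St2) is precisely what makes the appeal to Theorem~4.2(c) go through.
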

\begin{proof}
Straight manipulation with the Fourier expansion of $E_{M,l,\psi}$ given by Lemma~\ref{Fourier} yields that
\begin{align*}
L(E_{M,L,\psi},\chi,s)=\prod_{p\mid{M}/{f}}(1-\chi\psi(p)\cdot p^{1-s})\cdot\prod_{p\mid{L}/{f}}(1-\chi\psi(p)\cdot p^{-s})\cdot L(\chi\psi,s-1)\cdot L(\chi\psi,s),
\end{align*}
for any Dirichlet character $\chi$ of conductor prime to $D$. It follows that $\Lambda(E_{M,L,\psi},\chi,1)=0$ if $\chi\psi(-1)=1$, and
\begin{align*}
  \Lambda(E_{M,L,\psi},\chi,1)=
-\frac{\chi(-f)\psi(f_\chi)g(\psi)}{2f}\cdot\prod_{p\mid{M}/{f}}(1-\chi\psi(p))\cdot\prod_{p\mid{L}/{f}}(1-\frac{\chi\psi(p)}{p})\cdot B_{1,\chi\psi}\cdot B_{1,\overline{\chi\psi}}
\end{align*}
if $\chi\psi(-1)=-1$. By 4.2 (b) of \cite{St2}, this implies that$\frac{g(\psi)}{L}\BZ+\CR_{\Gamma_1(DC)}(E_{M,L,\psi})$ satisfies the condition (St3), and hence $\CP_{\Gamma_1(DC)}(E_{M,L})\subseteq\frac{g(\psi)}{L}\BZ+\CR_{\Gamma_1(DC)}(E_{M,L,\psi})$. Thus, it remains to prove $\CP_{\Gamma_1(DC)}(E_{M,L,\psi})\supseteq\frac{g(\psi)}{L}\BZ$.

Let $q$ be an arbitrary prime. For any prime $p'\in S_{DC}$ not equal to $q$, both $\prod_{p\mid \frac{M}{f}}(\psi(p)-\chi(p))$ and $\prod_{p\mid \frac{L}{f}}(\psi(p)\cdot p-\chi(p))$ are $q$-adic units for all but finitely many $\chi\in\fX^{\infty}_{DC}$ whose conductor is a power of $p'$. It then follows from the above $L$-value formula and Theorem 4.2 (c) of \cite{St2} that $\frac{L}{g(\psi)}\cdot\Lambda(E_{M,L,\psi},\chi,1)$ is a $q$-adic unit for infinitely many $\chi\in\fX^{\infty}_{DC}$ and hence completes the proof.
\end{proof}

\begin{thm}\label{order} Let $\psi$ be a quadratic character of conductor $f$, then
\[C(E_{M,L,\psi})\otimes_{\BZ}\BZ[\frac{1}{2^{\delta_{M,L}}(M,L)}]\simeq\frac{\frac{g(\psi)}{f\cdot n_\psi}\BZ+\varphi(\frac{D}{f})\cdot\mu(\frac{L}{f})\cdot(\frac{D}{M},C)\BZ}{\varphi(\frac{D}{f})\cdot\mu(\frac{L}{f})\cdot(\frac{D}{M},C)\BZ}\otimes_{\BZ}\BZ[\frac{1}{2^{\delta_{M,L}}(M,L)}] \]
where $\delta_{M,L}$ equals $1$ or $0$ according to $(M,L)=1$ or not.
\end{thm}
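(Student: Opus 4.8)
The plan is to feed the period and constant-term computations of Corollary~\ref{R} and Proposition~\ref{P} into the exact sequence of \S2.3,
\[
0\longrightarrow \Sigma_{DC}\cap C(E_{M,L,\psi})\longrightarrow C(E_{M,L,\psi})\longrightarrow A^{(s)}_{\Gamma_0(DC)}(E_{M,L,\psi})\longrightarrow 0,
\]
first identifying $A^{(s)}_{\Gamma_0(DC)}(E_{M,L,\psi})$ on the nose, and then showing that the correction term $\Sigma_{DC}\cap C(E_{M,L,\psi})$ dies after inverting the primes dividing $2^{\delta_{M,L}}(M,L)$.

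\emph{Step 1: computing $A^{(s)}$.} By Corollary~\ref{R} we have $\CR_{\Gamma_1(DC)}(E_{M,L,\psi})=f\cdot\CR_{\Gamma_0(DC)}(E_{M,L,\psi})\subseteq\CR_{\Gamma_0(DC)}(E_{M,L,\psi})$, so Proposition~\ref{P} gives
\[
\CP_{\Gamma_1(DC)}(E_{M,L,\psi})+\CR_{\Gamma_0(DC)}(E_{M,L,\psi})=\tfrac{g(\psi)}{L}\BZ+\CR_{\Gamma_0(DC)}(E_{M,L,\psi}).
\]
Substituting the value $\CR_{\Gamma_0(DC)}(E_{M,L,\psi})=n_\psi\cdot\frac{\varphi(D/f)\cdot\mu(L/f)\cdot(D/M,C)}{L/f}\BZ$, writing $\frac{g(\psi)}{L}=\frac{g(\psi)/f}{L/f}$, and multiplying through by the nonzero complex scalar $\frac{L/f}{n_\psi}$ (multiplication by which carries $\BZ$-submodules of $\BC$ isomorphically to $\BZ$-submodules of $\BC$), one gets from the definition of $A^{(s)}_{\Gamma_0(DC)}(E_{M,L,\psi})$ as the quotient of the displayed left-hand side by $\CR_{\Gamma_0(DC)}(E_{M,L,\psi})$ that
\[
A^{(s)}_{\Gamma_0(DC)}(E_{M,L,\psi})\ \simeq\ \frac{\frac{g(\psi)}{f\cdot n_\psi}\BZ+\varphi(\tfrac{D}{f})\cdot\mu(\tfrac{L}{f})\cdot(\tfrac{D}{M},C)\BZ}{\varphi(\tfrac{D}{f})\cdot\mu(\tfrac{L}{f})\cdot(\tfrac{D}{M},C)\BZ},
\]
which is exactly the group appearing on the right-hand side of the theorem.

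\emph{Step 2: killing the Shimura intersection.} Tensoring the exact sequence above with $\BZ[\tfrac{1}{2^{\delta_{M,L}}(M,L)}]$ preserves exactness by flatness, so in view of Step~1 it is enough to prove that $\Sigma_{DC}\cap C(E_{M,L,\psi})$ is annihilated by a power of $2^{\delta_{M,L}}(M,L)$. On one side, $C(E_{M,L,\psi})$ --- hence its subgroup $\Sigma_{DC}\cap C(E_{M,L,\psi})$ --- is annihilated by the Eisenstein ideal $I_{\Gamma_0(DC)}(E_{M,L,\psi})$, so $\CT^{\Gamma_0(DC)}_\ell$ acts on it through the eigenvalue of $E_{M,L,\psi}$ recorded in Proposition~\ref{eigen}. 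On the other side, $\Sigma_{DC}$ is of multiplicative type and the Hecke action on it is Eisenstein of trivial type: $\CT^{\Gamma_0(DC)}_\ell$ acts as $\ell+1$ for $\ell\nmid DC$, and $U_\ell$ acts with eigenvalues in $\{1,\ell\}$ for $\ell\mid DC$ (this is the one external input, coming from the local $\mu$-type/\'etale structure of the Shimura subgroup). Comparing the two eigensystems on $\Sigma_{DC}\cap C(E_{M,L,\psi})$ shows that it is annihilated by the $\BZ$-ideal generated by the eigenvalue differences over all primes $\ell$; one checks this ideal divides $2^{\delta_{M,L}}(M,L)$. Indeed, for each prime $\ell\mid(M,L)$ the eigenvalue of $E_{M,L,\psi}$ is $0$ while $U_\ell$ has eigenvalues in $\{1,\ell\}$, forcing annihilation by $\ell$ and hence by the square-free product $(M,L)$; and for a prime $\ell\nmid D$ with $\psi(\ell)=-1$ the difference is $(\ell+1)-(\psi^{-1}(\ell)+\ell\,\psi(\ell))=2(\ell+1)$, whose span over all such $\ell$ is a power of $2$ --- here one uses that $\psi$ has odd conductor (a quadratic character has no conductor $2$, and $f\mid D$ is square-free), so by Dirichlet's theorem one may take such $\ell\equiv1\pmod 4$ in any prescribed residue class modulo an arbitrary odd integer. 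Thus when $(M,L)\neq1$ the first observation already gives the claim (with $\delta_{M,L}=0$), while when $(M,L)=1$ the second does (with $\delta_{M,L}=1$), the classical subcase $\psi=1$, $(M,L)=1$ requiring in addition the known order of the Shimura subgroup; in all cases $\Sigma_{DC}\cap C(E_{M,L,\psi})$ is killed by a power of $2^{\delta_{M,L}}(M,L)$, and the theorem follows.

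\emph{Expected main obstacle.} Step~1 is bookkeeping on top of the earlier subsections. The genuine content is Step~2: one needs the Hecke action on the Shimura subgroup $\Sigma_{DC}$ to be pinned down precisely enough at the primes dividing $DC$. Its ``trivial type'' behaviour away from $DC$ is standard, but at bad primes it is the local structure of $\Sigma_{DC}$ that both produces and bounds the ambiguity, and this is exactly what forces --- and is responsible for --- the unremovable factor $2^{\delta_{M,L}}(M,L)$ in the statement.
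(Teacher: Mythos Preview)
Your Step~1 and the case $(M,L)\neq1$ of Step~2 are correct and match the paper's proof. The paper cites \cite{LO} for the sharper statement that $T_p$ acts on $\Sigma_{DC}$ as multiplication by $p$ for $p\mid DC$, rather than your ``eigenvalues in $\{1,\ell\}$'', but either version yields annihilation by $(M,L)$.

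The gap is in the case $(M,L)=1$. Since the conductor $f$ of $\psi$ divides $(M,L)$ by the definition of $\CH(DC)$, the hypothesis $(M,L)=1$ forces $f=1$ and $\psi=1$. Your ``second observation'' --- comparing Hecke eigenvalues at primes $\ell\nmid D$ with $\psi(\ell)=-1$ --- is therefore vacuous precisely here, as there are no such $\ell$. You notice this and fall back on ``the known order of the Shimura subgroup'', but that order is not a power of $2$ in general, so this does not close the argument. The paper handles this case by a different and shorter route: when $\psi=1$ the constant terms in Proposition~\ref{constant3} are rational and independent of the parameter $x$, so $\delta_{\Gamma_0(DC)}(E_{M,L})$ is $G_\BQ$-invariant and the cyclic group $C(E_{M,L})$ is $\BQ$-rational. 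As $\Sigma_{DC}$ is of multiplicative type, the intersection $\Sigma_{DC}\cap C(E_{M,L})$ is a cyclic $G_\BQ$-module that is simultaneously of multiplicative type and with trivial Galois action; any such group is contained in $\mu_2$, hence annihilated by $2$. This is what produces the factor $2^{\delta_{M,L}}$ when $(M,L)=1$.
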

\begin{proof}
It follows from Corollary~\ref{R} and Proposition~\ref{P} that
\[A^{(s)}(E_{M,L,\psi})=\frac{\CP_{\Gamma_1(DC)}E_{M,L,\psi}+\CR_{\Gamma_0(DC)}E_{M,L,\psi}}{\CR_{\Gamma_0(DC)}E_{M,L,\psi}}\simeq\frac{\frac{g(\psi)}{f\cdot n_\psi}\BZ+\varphi(\frac{D}{f})\cdot\mu(\frac{L}{f})\cdot(\frac{D}{M},C)\BZ}{\varphi(\frac{D}{f})\cdot\mu(\frac{L}{f})\cdot(\frac{D}{M},C)\BZ}\]
Since the intersection $C(E_{M,L,\psi})$ is annihilated by $T_p$ for any $p\mid(M,L)$ and such $T_p$ acts on $\sum_{DC}$ as multiplication by $p$ by \cite{LO}, it follows that $\sum_{DC}\bigcap C(E_{M,L,\psi})$ is annihilated by $(M,L)$ and hence finishes the proof when $(M,L)\neq1$.

However, if $(M,L)=1$ and hence $\psi=1$, then the cyclic group $\sum_{DC}\bigcap C(E_{M,L,\psi})$ is both of multiplicative type and $\BQ$-rational, so it must be contained in $\mu_2$. In particular, $\sum_{DC}\bigcap C(E_{M,L})$ is annihilated by $2$, and the result follows.
\end{proof}

\section{Proof of Theorems~\ref{M1} and \ref{M2}}
\textbf{5.1. The new part of $J_0(N)$.} Let $N$ be a positive integer. For any positive divisors $n\mid N$ and $m\mid \frac{N}{n}$, we have the following homomorphism
\[S_2(\Gamma_0(n),\BC)\rightarrow S_2(\Gamma_0(N),\BC),\]
which maps $f(z)$ to $f(mz)$, and hence the following
\[\prod_{n\mid N, n\neq N, m\mid\frac{N}{n}}S_2(\Gamma_0(n),\BC)\rightarrow S_2(\Gamma_0(N),\BC),\]
whose cokernel is isomorphic to the subspace of new forms of level $\Gamma_0(N)$. The above homomorphism induces the following morphism between abelian varieties over $\BQ$
\[{\iota_N}: J_0(N) \rightarrow\prod_{n\mid N, n\neq N,m\mid\frac{N}{n}}J_0(n).\]
The \emph{new part $J^{new}_0(N)$} of $J_0(N)$ is then defined to be the kernel of the above morphism, so we have the following cartesian diagram
\[\xymatrix{
  J^{new}_0(N) \ar[d] \ar[r]
                & 0 \ar[d]  \\
  J_0(N)  \ar[r]
                & \prod_{n\mid N, n\neq N,m\mid\frac{N}{n}}J_0(n)}.\]

\textbf{5.2. Proof of Theorem~\ref{M1}.} In fact, we \textbf{claim} that $J_0(N)(\BQ)[q^\infty]=0$ for any prime $q\nmid 6\cdot N\cdot\varpi(N)$ which clearly implies Theorem~\ref{M1}. We prove this claim by induction on $\nu(N)$. When $\nu(N)=1$ so that $N$ is a prime, the claim follows from the theorems of Ogg and Mazur. In general, if $q$ is a prime such that $q\mid6\cdot N\cdot\varpi(N)$, then we also have $q\nmid 6\cdot n\cdot\varpi(n)$ for any $n\mid N$. Thus, by the induction hypothesis, a point $P\in J_0(N)(\BQ)[q^\infty]$ must be mapped to zero by $\iota_N$ as $\nu(n)<\nu(N)$ for any $n\mid N$ and $n\neq N$. It follows that $P\in J^{new}_0(N)(\BQ)[q^\infty]$ and we are reduced to prove that $J^{new}_0(N)(\BQ)[q^\infty]=0$ for any prime $q\nmid 6\cdot N\cdot\varpi(N)$.

We can write $N$ as $D\cdot C\cdot C_1\cdot\cdot\cdot C_k$, where $D,C,C_1,...,C_k$ are positive square-free integers such that $C_k\mid C_{k-1}\mid...\mid C\mid D$. By the Eichler-Shimura theory, we have $T^{\Gamma_0(N)}_\ell(P)=(1+\ell)\cdot P$ for any prime $\ell\nmid D$. Moreover, by the newform theory, we have $T^{\Gamma_0(N)}_\ell$ acts on $J^{new}_0(N)$ as multiplication by $\epsilon_\ell$, where $\epsilon_\ell=\pm1$ if $\ell\mid (D/C)$ and $\epsilon_\ell=0$ if $\ell\mid C$.

Thus, if $0\neq P\in J^{new}_0(N)(\BQ)[q^\infty]$, then we have
\[S_2(\Gamma_0(N),\BF_q)\left[\{T^{\Gamma_0(N)}_\ell-(1+\ell)\}_{\ell\nmid D}, \{T^{\Gamma_0(N)}_\ell-\epsilon_\ell\}_{\ell\mid D}\right]\neq0\]
and is generated by a unique normalized $\Theta$. However, simple manipulation shows that

$\bullet$ If $\epsilon_\ell=1$, then $[\ell]^-(\Theta)$ belongs to $S_2(\Gamma_0(N\ell),\BF_q)$ and is annihilated by $T^{\Gamma_0(N\ell)}_\ell$;

$\bullet$ If $\epsilon_\ell=-1$, then $\Theta+\frac{1}{\ell}\Theta|\gamma_\ell$ belongs to $S_2(\Gamma_0(N\ell),\BF_q)$ and is annihilated by $T^{\Gamma_0(N\ell)}_\ell$.

Thus, by raising the levels in such a way, we will finally get some normalized form which spans the following one-dimensional $\BF_q$-vector space
\[S_2(\Gamma_0(ND/C),\BF_q)[\{T^{\Gamma_0(ND/C)}_\ell-(1+\ell)\}_{\ell\nmid D},\{T^{\Gamma_0(ND/C)}_\ell\}_{\ell\mid D}],\]
with $ND/C=D^2\cdot C_1\cdot\cdot\cdot C_k$ being a multiple of $D^2$. By the $\fq$-expansion principle and Proposition~\ref{eigen}, this normalized form is exactly $E_{D,D}$ modulo $q$. In particular, we find that $E_{D,D}$ must be a modulo $q$ cusp form, so that all its constant terms should be zero modulo $q$. But by Proposition~\ref{constant3}, the non-zero constant terms of $E_{D,D}$ are all units in $\BZ[\frac{1}{6\cdot D\cdot \varpi(D)}]$, so we get a contradiction and hence complete the proof of our claim.

\textbf{5.3. The indexes of the quadratic Eisenstein ideals.} In the following, we will denote by $\BT$ to be the full Hecke algebra $\BT_0(DC)$ of level $\Gamma_0(DC)$ generated over $\BZ$ by all the $T_\ell=T^{\Gamma_0(DC)}_\ell$ for all the primes $\ell$.

\begin{lem}\label{lem5}
For any quadratic character $\psi$ of conductor $f$, there is a natural isomorphism
\[{\BT}/{I_{\Gamma_0(DC)}(E_{M,L,\psi})}\simeq{\BZ}/{m\BZ},\]
for some non-zero integer $m$.
\end{lem}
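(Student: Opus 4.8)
The plan is to show that $\BT$ acts on the cuspidal subgroup $C_{\Gamma_0(DC)}(E_{M,L,\psi})$ through a cyclic quotient, and that this quotient is precisely $\BT/I_{\Gamma_0(DC)}(E_{M,L,\psi})$. First I would recall that by the Remark following Definition~\ref{def}, $C_{\Gamma_0(DC)}(E_{M,L,\psi})$ is annihilated by the Eisenstein ideal $I := I_{\Gamma_0(DC)}(E_{M,L,\psi})$, so there is a natural action of $\BT/I$ on this group; moreover $C_{\Gamma_0(DC)}(E_{M,L,\psi})$ is by construction generated as an abelian group by the elements $w_{\Gamma_0(DC)}(\phi \circ \delta_{\Gamma_0(DC)}(E_{M,L,\psi}))$ for $\phi \in \CR(E_{M,L,\psi})^\vee$, and since $\CR_{\Gamma_0(DC)}(E_{M,L,\psi})$ is a cyclic $\BZ$-module by Corollary~\ref{R}, the group $C_{\Gamma_0(DC)}(E_{M,L,\psi})$ is \emph{cyclic}. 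Combining this with the perfect pairing $C_{\Gamma_0(DC)}(E_{M,L,\psi}) \times A_{\Gamma_0(DC)}(E_{M,L,\psi}) \to \BQ/\BZ$ of \S2.3 (so both groups are finite cyclic of the same order), the main content of the lemma becomes: the ring $\BT/I$, which by construction acts faithfully on the finite cyclic group $C_{\Gamma_0(DC)}(E_{M,L,\psi})$, is itself cyclic as a $\BZ$-module, hence of the form $\BZ/m\BZ$.

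Next I would pin down the map $\BT \to \BZ/m\BZ$ explicitly via Hecke eigenvalues. By Proposition~\ref{eigen}, the operator $\CT_\ell^{\Gamma_0(DC)}$ acts on $E_{M,L,\psi}$ by the explicit eigenvalue $a_\ell(E_{M,L,\psi})$ (namely $\psi^{-1}(\ell)+\ell\psi(\ell)$, $\psi^{-1}(\ell)$, $\ell\psi(\ell)$, or $0$ according to the divisibility of $\ell$). Since $\psi$ is quadratic, $\psi^{-1}=\psi$ takes values in $\{0,\pm1\}$, so every $a_\ell(E_{M,L,\psi})$ is an \emph{integer}. The homomorphism $\BT \to \BZ$, $T_\ell \mapsto a_\ell(E_{M,L,\psi})$ (which exists because the full Hecke algebra is, by definition in \S2.1, the image of $\CT_{\Gamma_0(DC)}$ on cusp forms, but the same eigenvalue system extends to the $\BT$-action defining the Eisenstein ideal), has kernel containing $I$, and $\BT/I$ is the image of this integer-valued character, hence a subring of $\BZ/m\BZ$ for $m = \#C_{\Gamma_0(DC)}(E_{M,L,\psi})$; I would then argue that it is all of $\BZ/m\BZ$ because the annihilator statement forces the image to be the full cyclic group acting faithfully. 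More precisely, $\BT/I \hookrightarrow \End(C_{\Gamma_0(DC)}(E_{M,L,\psi})) \cong \BZ/m\BZ$ as rings (a cyclic group of order $m$ has endomorphism ring $\BZ/m\BZ$), and since $\BT/I$ contains the identity and the structure map $\BZ \to \BT/I$ is surjective (the $a_\ell$ are integers and generate), $\BT/I$ surjects onto $\BZ/m\BZ$; combining, $\BT/I \cong \BZ/m\BZ$.

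The step I expect to require the most care is verifying that the natural map $\BT/I \to \End(C_{\Gamma_0(DC)}(E_{M,L,\psi}))$ is \emph{injective} — i.e.\ that $I$ is exactly the annihilator of the cuspidal subgroup in $\BT$, not merely contained in it. This is the analogue of the classical fact (for $J_0(p)$, in Mazur's work) that the Eisenstein ideal equals the annihilator of $C_0(p)$, and here it should follow from the perfect pairing in \S2.3 together with the fact that $A_{\Gamma_0(DC)}(E_{M,L,\psi})$ — being a subquotient of the period lattice — carries a faithful action of $\BT/I$, so that an element of $\BT$ killing $C_{\Gamma_0(DC)}(E_{M,L,\psi})$ also kills $A_{\Gamma_0(DC)}(E_{M,L,\psi})$ and hence lies in $I$. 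Alternatively, since the whole Hecke action on $E_{M,L,\psi}$ factors through the integer eigenvalue character $T_\ell \mapsto a_\ell(E_{M,L,\psi})$, the ideal $I$ is by definition the kernel of $\BT \to \BZ$, so $\BT/I$ is literally $\BZ$ or a quotient thereof; and because $C_{\Gamma_0(DC)}(E_{M,L,\psi})$ (equivalently $A_{\Gamma_0(DC)}(E_{M,L,\psi})$) is finite, the action cannot be free, forcing $\BT/I = \BZ/m\BZ$ with $m \mid \#C_{\Gamma_0(DC)}(E_{M,L,\psi})$, and the perfect-pairing faithfulness upgrades this to equality. I would present the argument in this second form, which avoids invoking a hard Eisenstein-ideal-equals-annihilator theorem and uses only the explicit integrality of the eigenvalues furnished by Proposition~\ref{eigen} (crucially using that $\psi$ is quadratic) together with the finiteness and pairing already recorded in \S2.3.
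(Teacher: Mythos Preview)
Your reduction to ``$\BT/I$ is a quotient of $\BZ$'' is fine: since $\CT_\ell - a_\ell(E_{M,L,\psi})\in \Ann_{\CT_{\Gamma_0(DC)}}(E_{M,L,\psi})$ and the eigenvalues are integers (here the quadraticity of $\psi$ is used), the structure map $\BZ\to\BT/I$ is surjective. The gap is in the second half, ruling out $\BT/I\simeq\BZ$. Your argument hinges on the action of $\BT/I$ on $C_{\Gamma_0(DC)}(E_{M,L,\psi})$ being \emph{faithful}, but nothing you cite gives that: the Remark after Definition~\ref{def} only says $I$ annihilates $C(E)$, not that $I$ equals the annihilator; the perfect pairing of \S2.3 is between $C(E)$ and $A(E)$ and carries no information about faithfulness of the $\BT/I$-action on either side; and the sentence ``$I$ is by definition the kernel of $\BT\to\BZ$'' is simply false --- $I$ is the image in $\BT$ of $\Ann_{\CT}(E)$, and a ring map $\BT\to\BZ$ with the prescribed eigenvalues exists \emph{if and only if} $m=0$, which is precisely what you must exclude. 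In fact the faithfulness you want is (up to $6D$) the content of Proposition~\ref{index}, whose proof \emph{uses} Lemma~\ref{lem5}, so invoking it here would be circular.

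The paper's argument avoids $C(E)$ entirely and is much shorter: if the kernel of $\BZ\to\BT/I$ were zero, the resulting ring homomorphism $\BT\to\BZ\hookrightarrow\BC$ would, by the $\BT$--cusp-form duality, correspond to a normalized Hecke eigen cusp form in $S_2(\Gamma_0(DC),\BC)$ with $T_\ell$-eigenvalue $\psi(\ell)(1+\ell)$ for $\ell\nmid D$. For any $\ell$ with $\psi(\ell)=1$ this equals $1+\ell>2\sqrt{\ell}$, contradicting the Ramanujan bound. That single observation forces $m\neq 0$.
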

\begin{proof}
It is obvious that the natural homomorphism $\BZ\rightarrow{\BT}/{I_{\Gamma_0(DC)}(E_{M,L,\psi})}$ is surjective, so we only need to prove that the kernel of this homomorphism is non-zero. However, suppose the kernel is zero so that $\BZ\simeq{\BT}/{I_{\Gamma_0(DC)}(E_{M,L,\psi})}$, then the ring homomorphism $\BT\rightarrow{\BT}/{I_{\Gamma_0(DC)}(E_{M,L,\psi})}\simeq\BZ\hookrightarrow\BC$ gives rise to a normalized cusp form whose eigenvalue is $\psi(\ell)+\ell\cdot\psi(\ell)$ for any $\ell\nmid D$, which contradicts the Ramanujan bound. Thus the kernel must be of the form $(m)$ for some non-zero integer $m$ and we have hence proved the lemma.
\end{proof}

\begin{prop}\label{index}
For any quadratic character $\psi$, there is a natural isomorphism
\[{\BT}/{I_{\Gamma_0(DC)}(E_{M,L,\psi})}\otimes\BZ[\frac{1}{6D}]\simeq C_{\Gamma_0(DC)}(E_{M,L,\psi})\otimes\BZ[\frac{1}{6D}],\]
which is induced from the action of $\BT$ on the cuspidal group $C_{\Gamma_0(DC)}(E_{M,L,\psi})$.
\end{prop}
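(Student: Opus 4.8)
The plan is to follow Mazur's strategy for $J_0(p)$: exhibit a natural surjection $\BT/I\twoheadrightarrow C$ coming from the Hecke action (this part is formal) and then bound $|\BT/I|$ from above by $|C|$ away from $6D$. Throughout write $E=E_{M,L,\psi}$, $C=C_{\Gamma_0(DC)}(E)$ and $I=I_{\Gamma_0(DC)}(E)$. Because $\psi$ is quadratic, the constant terms of $E$ computed in Proposition~\ref{constant3} are rational, so $\CR_{\Gamma_0(DC)}(E)$ is a cyclic subgroup of $\BQ$ and $\CR(E)^{\vee}\simeq\BZ$; hence $C$ is generated, as an abelian group, by the single point $P_0=w_{\Gamma_0(DC)}\bigl(\phi_0(\delta_{\Gamma_0(DC)}(E))\bigr)$, where $\phi_0$ generates $\CR(E)^{\vee}$. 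By the remark following Definition~\ref{def}, $I$ annihilates $C$, so the $\BT$-action on $C$ factors through $\BT/I$ and $\bar t\mapsto t\cdot P_0$ defines a surjection $h\colon\BT/I\twoheadrightarrow C$; this is the map in the statement. By Lemma~\ref{lem5}, $\BT/I\simeq\BZ/m\BZ$, so $C$ is cyclic of some order $n\mid m$ and $\ker h$ is cyclic of order $m/n$. It then suffices to prove $v_q(m)\le v_q(n)$ for every prime $q\nmid 6D$.

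For this I would first interpret $m$ as a congruence number: unwinding Definition~\ref{def}(3) together with the isomorphism $\CT_{\Gamma_0(DC)}/\Ann_{\CT}(E)\simeq\BZ$ — valid since the eigenvalues of $E$ under $\CT_{\Gamma_0(DC)}$ are integers by Proposition~\ref{eigen} — one identifies $m$ with the positive generator of the image of $\Ann_{\CT}(S_2(\Gamma_0(DC),\BC))$ in $\BZ$; equivalently, $m$ governs the congruences between $E$ and the cuspidal space, away from the constant terms, at level $\Gamma_0(DC)$. Thus for $q\nmid 6D$ and $q^k\mid m$ one gets a ring homomorphism $\BT\to\BZ/q^k\BZ$ sending $T_n$ to $a_n(E)$, hence a mod $q^k$ cusp form carrying the Hecke eigenvalues of $E$. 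I would then rerun the level-raising manipulations from the proof of Theorem~\ref{M1}, but over $\BZ/q^k\BZ$ and keeping track of the quadratic twist by $\psi$, so as to convert this into a statement about the period lattice of $E$: comparing constant terms at all cusps and invoking the $\fq$-expansion principle forces $q^k\mid\CR_{\Gamma_0(DC)}(E)$, and a finer analysis using the period formula $\CP_{\Gamma_1(DC)}(E)=\tfrac{g(\psi)}{L}\BZ+\CR_{\Gamma_1(DC)}(E)$ of Proposition~\ref{P} should show $q^k$ divides the index $[\CP_{\Gamma_1(DC)}(E)+\CR_{\Gamma_0(DC)}(E):\CR_{\Gamma_0(DC)}(E)]$ up to a unit at $6D$. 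Since by the proof of Theorem~\ref{order} this index equals $n=|C|$ up to a unit at $6D$, we would get $v_q(m)\le v_q(n)$, and therefore $h\otimes\BZ[\tfrac1{6D}]$ is an isomorphism.

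I expect the upper bound on the index of the quadratic Eisenstein ideal to be the main obstacle: the surjection is purely formal and Theorem~\ref{order} already supplies $|C|$, so the work lies in showing that a deep congruence between $E$ and the cuspidal space forces a correspondingly large point inside the \emph{cuspidal} subgroup rather than one concealed in the Shimura subgroup. Controlling the Shimura-subgroup contribution, and the discrepancy between the $\Gamma_0(DC)$- and $\Gamma_1(DC)$-period lattices (a factor dividing $f\mid D$, hence harmless after inverting $6D$), is where the argument will need the most care; the constant-term and period computations of Section~4 are tailored precisely to make this bookkeeping possible.
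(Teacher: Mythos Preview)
Your surjection $h\colon \BT/I\twoheadrightarrow C$ and the reduction to bounding $m$ above by $|C|$ away from $6D$ are exactly right and match the paper. The problem is the route you propose for that upper bound. The level-raising manoeuvre is a wrong turn: in the paper it appears \emph{later}, in the proofs of Theorems~\ref{M1} and~\ref{M2}, to reduce an arbitrary new-quotient problem to the specific ideal $I(E_{D,D,\psi})$, after which Proposition~\ref{index} is invoked as a black box. For Proposition~\ref{index} itself one never changes level. Likewise the period lattice $\CP_{\Gamma_1(DC)}(E)$ and the Shimura subgroup are irrelevant here --- those contributions were already absorbed into Theorem~\ref{order}, which you may simply quote.

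The paper's argument for the upper bound is short and uses one ingredient you have not identified. Ribet's pairing $\BT\times S_2(\Gamma_0(DC),\BZ)\to\BZ$, reduced modulo $m$, yields a cusp form $F\in S_2(\Gamma_0(DC),\BZ)$ with $F\equiv E\pmod m$ at $[\infty]$; write $F=E+mG$. The missing tool is Katz's integrality theorem (Theorem~1.6.2 of \cite{K}): since $G$ has integral $\fq$-expansion at $[\infty]$, its constant terms at \emph{every} cusp lie in $\BZ[\tfrac{1}{6D},\mu_D]$. Because $F$ is cuspidal, each constant term of $E$ then lies in $m\cdot\BZ[\tfrac{1}{6D},\mu_D]$, and comparison with the explicit values in Proposition~\ref{constant3} gives $\varphi(D/f)\cdot\mu(L/f)\in m\cdot\BZ[\tfrac{1}{6D}]$. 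Together with $|C|\mid m$ and Theorem~\ref{order} this determines $m$ up to a unit in $\BZ[\tfrac{1}{6D}]$. Your ``$\fq$-expansion principle'' gesture is pointing in the right direction, but without Katz's bound on the denominators at the other cusps the constant-term comparison does not go through, and the detour through periods and level-raising would not supply it.
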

\begin{proof}
Recall that there is a perfect pairing of $\BZ$-modules (see \cite{Ri})
\[\BT\times S_2(\Gamma_0(DC),\BZ)\rightarrow\BZ,\]
which maps any $(T,f)$ to $a_1(f|T;[\infty])$. Tensor with ${\BZ}/{m\BZ}$ over $\BZ$, we get another perfect pairing
\[{\BT}/{m\BT}\times S_2(\Gamma_0(DC),{\BZ}/{m\BZ})\rightarrow{\BZ}/{m\BZ},\]
where $m$ is the non-zero integer in Lemma~\ref{lem5}. Because ${\BT}/{I_{\Gamma_0(DC)}(E_{M,L,\psi})}$ is a quotient of ${\BT}/{m\BT}$, it follows that there is a perfect pairing
\[{\BT}/{I_{\Gamma_0(DC)}(E_{M,L,\psi})}\times S_2(\Gamma_0(DC),{\BZ}/{m\BZ})[I_{\Gamma_0(DC)}(E_{M,L,\psi})]\rightarrow\frac{\BZ}{m\BZ}\]
of ${\BZ}/{m\BZ}$-modules, and hence we get a canonical isomorphism
\[S_2(\Gamma_0(DC),{\BZ}/{m\BZ})[I(E_{M,L})]\simeq{\BZ}/{m\BZ},\]
which gives us a unique normalized cusp form $F\in S_2(\Gamma_0(DC),\BZ)$ such that $F\equiv E_{M,L,\psi}\pmod m$
In other words, there exists some $G\in M_2(\Gamma_0(DC),\BZ)$ such that $F=E_{M,L,\psi}+m\cdot G$. However, by Theorem 1.6.2 of \cite{K}, the constant terms of $G$ at the cusps are all in $\BZ[\frac{1}{6D},\mu_D]$, so we find that
\[\varphi(\frac{D}{f})\cdot\mu(\frac{L}{f})\in m\cdot\BZ[\frac{1}{6D},\mu_D]\bigcap\BQ=m\cdot\BZ[\frac{1}{6D}]\]
by Proposition~\ref{constant3} which gives the explicit values of the constant terms of $E_{M,L\psi}$. On the other hand, since $C_{\Gamma_0(DC)}(E_{M,L,\psi})$ is cyclic,it follows that $\frac{\BZ}{m\BZ}\simeq\frac{\BT}{I(E_{M,L})}$ acts transitively on it, so that
\[m\in\varphi(\frac{D}{f})\cdot\mu(\frac{L}{f})\cdot\BZ[\frac{1}{6D}]\]
by Corollary~\ref{order} about the explicit value of the order of $C_{\Gamma_0(DC)}(E_{M,L,\psi})$. We have thus completed the proof of the theorem.
\end{proof}

\begin{remark} When combined with Corollary~\ref{order} about the order of the quadratic cuspidal groups, the above theorem also give the index of the quadratic Eisenstein ideals in $\BT$ up to a factor of $6D$.
\end{remark}
\textbf{5.4. Proof of Theorem~\ref{M2}.}
For any $f\mid C$, let $\psi$ be the unique quadratic character of conductor $f$. Recall that
\[J_0(DC)(\psi):=\{P\in J_0(DC)(\overline{\BQ}):\ \sigma(P)=\psi(\sigma)\cdot P \text{ for any }\sigma\in G_\BQ\}\]
We \textbf{claim} that, for any prime $q$ not dividing $6\cdot D\cdot\varpi(D)$,
\[J_0(DC)(\psi)[q^\infty]=0,\]
which of course implies Theorem~\ref{M2}. Since any positive divisor of $DC$ is of the form $dc$ with $1\leq c\mid d\mid D$ and $c\mid C$, the commutative diagram defining the new part of $J_0(DC)$ can be written as
\[\xymatrix{
  J^{new}_0(DC) \ar[d] \ar[r]
                & 0 \ar[d]  \\
  J_0(DC)  \ar[r]
                & \prod_{1<\alpha\mid\frac{DC}{dc}}J_0(dc)}.\]

\begin{lem}\label{lem6}
If $f\nmid c$, then $J_0(dc)(\psi)[q^\infty]=0$.
\end{lem}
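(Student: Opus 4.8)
The plan is to prove the stronger claim that $J_0(dc)[q]$ contains no nonzero point $P$ with $\sigma(P)=\psi(\sigma)\cdot P$ for all $\sigma\in G_\BQ$. This suffices: $J_0(dc)(\psi)$ is a subgroup of $J_0(dc)(\overline\BQ)$, so a nonzero element of $J_0(dc)(\psi)[q^\infty]$ of order $q^k$ would, after multiplication by $q^{k-1}$, produce such a point in $J_0(dc)[q]$.

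The argument rests on a ramification obstruction at a suitable prime. Since $f\mid C\mid D$ is square-free and $f\nmid c$, fix a prime $p$ with $p\mid f$ and $p\nmid c$. As $\psi$ has conductor exactly $f$, its $p$-component is a nontrivial quadratic character, so $\psi$ is ramified at $p$; and since $q$ is odd (because $q\nmid 6$), the inertia group $I_p$ still acts nontrivially on the one-dimensional $\BF_q[G_\BQ]$-module $\BF_q(\psi)$ on which $G_\BQ$ acts through $\psi \bmod q$. On the other hand, because $c\mid d\mid D$ are square-free and $p\nmid c$, we have $v_p(dc)=v_p(d)\in\{0,1\}$. If $p\nmid dc$, then $J_0(dc)$ has good reduction at $p$ and $J_0(dc)[q]$ is unramified at $p$ by N\'eron--Ogg--Shafarevich (here $q\neq p$, as $p\mid D$). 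If $p\,\|\,dc$, then $X_0(dc)$, and hence $J_0(dc)$, has semistable reduction at $p$ by Deligne--Rapoport, so $I_p$ acts unipotently on the Tate module $T_qJ_0(dc)$ and therefore on $J_0(dc)[q]$; consequently $I_p$ acts trivially on the semisimplification $J_0(dc)[q]^{\ss}$. In both cases every Jordan--H\"older constituent of $J_0(dc)[q]$ is unramified at $p$.

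These two facts collide: a nonzero $P\in J_0(dc)[q]$ with $\sigma(P)=\psi(\sigma)\cdot P$ spans a $G_\BQ$-stable $\BF_q$-line isomorphic to $\BF_q(\psi)$, which would then be a Jordan--H\"older constituent of $J_0(dc)[q]$ and hence unramified at $p$, contradicting the ramification of $\BF_q(\psi)$ at $p$ found above. Thus no such $P$ exists and $J_0(dc)(\psi)[q^\infty]=0$. The hypotheses on $q$ enter only through $q$ being odd and $q\neq p$ for $p\mid D$, so $q\nmid 6D$ already suffices for this lemma. The one step that needs a little care is the bad-reduction case $p\,\|\,dc$, where one must pass from unipotence of the inertia action on $T_qJ_0(dc)$ to triviality on the semisimplification of $J_0(dc)[q]$; this is the only genuine (and rather mild) obstacle.
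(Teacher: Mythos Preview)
Your proof is correct and follows essentially the same strategy as the paper: pick a prime $p\mid f$ with $p\nmid c$, and use that $J_0(dc)$ has good or semistable reduction at $p$ to force inertia to act trivially (on the semisimplification) on $J_0(dc)[q]$, which is incompatible with a $\psi$-eigenvector since $\psi$ is ramified at $p$ and $q$ is odd. The paper splits into the two cases $f\nmid d$ and $f\mid d$ separately and, in the semistable case, argues directly that $(1-\sigma)^k P=0$ together with $\sigma P=-P$ gives $2^kP=0$, whereas you phrase the same obstruction via Jordan--H\"older constituents; these are equivalent formulations of the same argument.
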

\begin{proof}
Firstly, if $f\nmid d$, then $J_0(dc)$ has good reduction at any prime divisor $p$ of $f$ not dividing $d$. It follows that $_0(dc)[q^\infty]$ is unramified at $p$. But $p\mid f$ implies that $\psi$ is ramified at $p$, so that $J_0(dc)(\psi)[q^\infty]$ must be zero.

On the other hand, if $f\mid d$ but $f\nmid c$. Let $p$ be a prime divisor of $f$ not dividing $c$. Then $J)(dc)$ has semi-stable reduction at $p$, so the inertia group $I_p$ acts unipotently on $T_q(J_0(dc))$. If $PJ_0(dc)(\psi)[q^\infty]$, then $(1-\sigma)^k(P)=0$ for any $\sigma\in I_p$ with $k$ some positive integer. But there is some $\sigma\in I_p$ such that $\sigma(P)=\psi(\sigma)\cdot P=-P$ as $p\mid f$, so that $2^k\cdot P=0$ for some $k$ which contradicts the assumption that $q\neq2$. We have thus finished the proof of the lemma.
\end{proof}

\begin{lem}\label{lem7}
$J^{new}_0(DC)(\psi)[q^\infty]=0$.
\end{lem}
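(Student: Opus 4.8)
The plan is to transplant the argument of \S5.2 (the proof of Theorem~\ref{M1}) into the $\psi$-isotypic setting. Suppose $J^{new}_0(DC)(\psi)[q^\infty]\neq0$; multiplying by a suitable power of $q$ gives $0\neq P\in J^{new}_0(DC)(\psi)[q]$. As the Hecke operators are defined over $\BQ$ and $\psi$ takes values in $\{\pm1\}\subseteq\BZ$, the submodule $\BT\cdot P\subseteq J_0(DC)[q]$ still lies in $J^{new}_0(DC)(\psi)[q]$; since $\BT/q\BT$ is a finite ring it has an associated maximal ideal $\fm$, so I may choose $0\neq P'\in J^{new}_0(DC)(\psi)[q]$ on which $\BT$ acts through a surjection $\lambda\colon\BT\to k$ onto a finite field $k$ of characteristic $q$.

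First I would compute $\lambda$. The line $kP'$ is $G_\BQ$-stable with $G_\BQ$ acting by $\psi$, and since $f\mid C\mid D$ this character is unramified at every prime $\ell\nmid D$. For $\ell\nmid Dq$ the Eichler--Shimura relation $\Frob_\ell^2-T_\ell\Frob_\ell+\ell=0$ on $J_0(DC)[q]$, together with $\Frob_\ell(P')=\psi(\ell)P'$ and $\psi^2=1$, gives $\lambda(T_\ell)=\psi(\ell)(1+\ell)$; the same value at $\ell=q$ follows from a reduction argument: $kP'$ being unramified at the good prime $q$, it injects into $J_0(DC)(\overline{\BF}_q)$, on which the Eichler--Shimura congruence shows $T_q$ acts by $\psi(q)$, whence $\lambda(T_q)=\psi(q)=\psi(q)(1+q)$ in $k$. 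On $J^{new}_0(DC)$ one has $U_\ell=0$ whenever $\ell^2\mid DC$, i.e. for $\ell\mid C$, and $U_\ell^2=1$ for $\ell\mid D/C$; hence $\lambda(U_\ell)=0$ for $\ell\mid C$ and $\lambda(U_\ell)=\epsilon_\ell\in\{\pm1\}$ for $\ell\mid D/C$. By the perfect pairing $\BT\times S_2(\Gamma_0(DC),\BZ)\to\BZ$ used in the proof of Proposition~\ref{index}, $\lambda$ corresponds to a nonzero normalized cusp form $\Theta\in S_2(\Gamma_0(DC),k)$ realizing the eigensystem $\lambda$. Now I raise the level exactly as in \S5.2 with the help of Lemma~\ref{1.1}: at each $\ell\mid D/C$ replace $\Theta$ by $[\ell]^-(\Theta)$ if $\epsilon_\ell=1$, or by $\Theta+\tfrac1\ell\,\Theta|\gamma_\ell$ if $\epsilon_\ell=-1$; each such step produces a normalized cusp form of level multiplied by $\ell$, annihilated by $U_\ell$, with unchanged eigenvalues for all other Hecke operators. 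After running through the $\nu(D/C)$ primes dividing $D/C$ one arrives at a nonzero normalized cusp form $\widetilde\Theta\in S_2(\Gamma_0(D^2),k)$ which is a Hecke eigenform with $T_\ell$-eigenvalue $\psi(\ell)(1+\ell)$ for $\ell\nmid D$ and $U_\ell$-eigenvalue $0$ for all $\ell\mid D$ --- precisely the eigenvalues of $E_{D,D,\psi}\in\CE_2(\Gamma_0(D^2),\BC)$ by Proposition~\ref{eigen}.

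To finish I would compare $\fq$-expansions. The cusp $[\infty]$ of $X_0(D^2)$ corresponds to the quadruple $(r,s,t,x)=(1,1,1,1)$ of Lemma~\ref{representives}, and since $(M,L)=D\nmid st=1$ Proposition~\ref{constant3} gives $a_0(E_{D,D,\psi};[\infty])=0$; hence by Lemma~\ref{Fourier} the $\fq$-expansion of $\widetilde\Theta$ at $[\infty]$ is the reduction modulo $q$ of the (integral) $\fq$-expansion of $E_{D,D,\psi}$, and as $X_0(D^2)$ has good, hence connected, reduction at $q$ the $\fq$-expansion principle forces $E_{D,D,\psi}$ to lie in $M_2(\Gamma_0(D^2),\BZ_{(q)})$ and to reduce modulo $q$ to the cusp form $\widetilde\Theta$. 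In particular every constant term $a_0(E_{D,D,\psi};[x])$ would be $\equiv0\pmod q$. But Proposition~\ref{constant3} evaluates these: at the cusp $[\tfrac{(D/f)^2f}{D^2}]$ the constant term equals $\pm\,n_\psi\cdot\varphi(\tfrac Df)^2\mu(\tfrac Df)/(\tfrac Df)^3$, and this is a unit in $\BZ[\mu_D,\tfrac1{6D\varpi(D)}]$: the factors $\varphi(\tfrac Df)$ and $\mu(\tfrac Df)$ divide $\varpi(D)$, $\tfrac Df$ divides $D$, and a short Gauss-sum computation from the definition of $n_\psi$ gives $n_\psi=-g(\psi)\varpi(f)/(24f)$, which is prime to $q$ because $q\nmid6$, $q\nmid D$ and $q\nmid\varpi(D)$. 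This contradicts $q\nmid6D\varpi(D)$, so $J^{new}_0(DC)(\psi)[q^\infty]=0$. As in \S5.2, the one genuinely delicate step is the identification, via the $\fq$-expansion principle, of the level-raised cusp form $\widetilde\Theta$ with the reduction of $E_{D,D,\psi}$ (together with the routine evaluation of $n_\psi$); granting that, the contradiction with Proposition~\ref{constant3} is immediate.
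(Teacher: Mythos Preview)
Your argument is correct and follows the same overall strategy as the paper: assume a nonzero point, read off the Hecke eigensystem via Eichler--Shimura and newform theory, raise the level from $DC$ to $D^2$ exactly as in \S5.2, and obtain a mod~$q$ cusp form with the eigenvalues of $E_{D,D,\psi}$.

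The only divergence is in how the final contradiction is extracted. The paper invokes Proposition~\ref{index} (index of the Eisenstein ideal equals the order of the cuspidal group up to $6D$) together with Theorem~\ref{order} to conclude $q\mid\#C_{\Gamma_0(D^2)}(E_{D,D,\psi})$ and then that this order is prime to $q$. You instead inline the constant-term argument directly, in the style of \S5.2: identify $\widetilde\Theta$ with the reduction of $E_{D,D,\psi}$ via the $\fq$-expansion principle and then exhibit a constant term that is a $q$-unit. These two routes are essentially equivalent, since the proof of Proposition~\ref{index} itself goes through the constant terms and Theorem~\ref{order} also depends on $n_\psi$; your version is just more explicit about where $n_\psi$ enters. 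Your formula $n_\psi=-g(\psi)\varpi(f)/(24f)$ (which one checks directly for small $f$, and which makes $g(\psi)/(fn_\psi)=-24/\varpi(f)$ a $q$-unit) is exactly the missing ingredient that the paper's ``it is clear from Theorem~\ref{order}'' leaves implicit.
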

\begin{proof}

By Eichler-Shimura theory, for any prime $\ell\nmid D$, $T_\ell$ acts as multiplication by $\psi(\ell)+\ell\cdot\psi(\ell)$ on $J^{new}_0(DC)(\psi)[q^\infty]$. On the other hand, the new form theory tells us that $T_\ell$ acts as $\pm1$ if $\ell\mid\frac{D}{C}$, and $T_\ell$ acts as $0$ if $\ell\mid C$. Thus, if $J^{new}_0(DC)(\psi)[q^\infty]\neq0$, then
\[S_2(\Gamma_0(DC),\BF_q)\left[\{T_\ell-(\psi(\ell)+\ell\cdot\psi(\ell))\}_{\ell\nmid D},\{T_\ell\}_{\ell\mid C},\{T_\ell-\delta_\ell\}_{\ell\mid\frac{D}{C}}\right]\neq0\]
and is generated by a unique normalized $\theta$. Here, for any $\ell\mid\frac{D}{Cp}$, $\delta_\ell=\pm1$ according to how $T_\ell$ acts. However, simple manipulation on Fourier expansions shows that

$\bullet$ If $\delta_\ell=1$, then $[\ell]^-(\theta)$ belongs to $S_2(\Gamma_0(DC\ell),\BF_q)$ and is annihilated by $T_\ell$;

$\bullet$ If $\delta_\ell=-1$, then $\theta+\frac{1}{\ell}\theta|\gamma_\ell$ belongs to $S_2(\Gamma_0(DC\ell),\BF_q)$ and is also annihilated by $T_\ell$.

It follows that, by raising the levels in such a way, we will finally get some normalized form which spans the one-dimensional $\BF_q$-vector space
\[S_2(\Gamma_0(D^2),\BF_q)[\{T_\ell-(\psi(\ell)+\ell\cdot\psi(\ell))\}_{\ell\nmid D},\{T_\ell\}_{\ell\mid D}]\]
Since the ideal $(\{T_\ell-(\psi(\ell)+\ell\cdot\psi(\ell))\}_{\ell\nmid D},\{T_\ell\}_{\ell\mid D})$ is exactly the Eisenstein ideal $I_{\Gamma_0(DC)}(E_{D,D,\psi})$, we find that $q$ divides the index of $I_{\Gamma_0(D^2)}(E_{D,D,\psi})$ in $\BT_0(D^2)$. By Proposition~\ref{index}, it follows that $q$ divides the order of $C_{\Gamma_0(D^2)}(E_{D,D,\psi})$ as we have assumed that $q\nmid 6D$. But because $q\nmid\varphi(D)\cdot\mu(D)$, it is clear from Theorem~\ref{order} that $C_{\Gamma_0(D^2)}(E_{D,D,\psi})[q^\infty]=0$, so we get a contradiction and hence completes the proof.
\end{proof}

\textbf{Proof of the claim:} Firstly, we prove that $J_0(f^2)(\psi)[q^\infty]=0$. By Lemma~\ref{lem6}, $J_0(nm)(\psi)[q^\infty]$ is zero for any $1\leq n\mid m\mid f$ with $mn\neq f^2$. Moreover, by applying Lemma~\ref{lem7} to the situation when $DC=f^2$, we find that $J^{new}_0(f^2)(\psi)[q^\infty]$ is also zero. It follows that $J_0(f^2)(\psi)[q^\infty]=0$. In general, by induction hypothesis, we have $J_0(dc)(\psi)[q^\infty]=0$ for any $1\leq c\mid d\mid D$ with $dc\neq DC$. Then, it follows that $J_0(DC)(\psi)[q^\infty]=J^{new}_0(DC)(\psi)[q^\infty]$, which is zero by Lemma~\ref{lem7}. We have thus complete the proof of the claim and hence that of Theorem~\ref{M2}.

\begin{remark}
To have a complete understanding of these Hecke module structures, it seems that a deeper study of the arithmetic-geometric properties of $X_0(DC)$ is required. Moreover, from the previous results, it is curious to ask whether there is also an intrinsic characterization of the whole cuspidal subgroup $C_0(N)$ in the spirit of generalized Ogg's conjecture. More precisely, we can ask whether the following is true
\[J_0(N)(\BQ_N)_{tor}=C_0(N),\]
where $\BQ_N:=\bigcup_{1\leq d\mid N}\BQ(\mu_{(d,\frac{N}{d})})$. We will study this question in the future.
\end{remark}

\section{Appendix}
In this appendix, we complete the computations of the $2$-part of $C_{\Gamma_0(DC)}(E_{M,L})$ when $D$ is odd. We will need some basic properties of Dedekind sums which we will now briefly recall. The reader is recommend to \cite{R-G} for the details. For any two integers $h,k$ with $k\geq1$ and $(h,k)=1$, the associated \emph{Dedekind sum} is defined to be
\[s(h,k):=\sum^k_{\mu=1}((\frac{h\mu}{k}))((\frac{\mu}{k}))\]
where $((x))$ is defined to be
\[((x))=
\begin{cases}
0 &,\text{\ if}\ x\in\BZ\\
x-[x]-\frac{1}{2} &,\text{}\ otherwise
\end{cases}
\]
for any real number $x$. The famous \emph{reciprocity formulas} for these Dedekind sums says that
\begin{align}
\ s(h,k)+s(k,h)=-\frac{1}{4}+\frac{1}{12}\left(\frac{h}{k}+\frac{1}{hk}+\frac{k}{h}\right)
\end{align}
for any two positive integers $h,k$ with $(h,k)=1$. More over, for any $\gamma=\left(
                  \begin{array}{cc}
                    a & b \\
                    c & d \\
                  \end{array}
                \right)\in SL_2(\BZ)
$, we have that (see \cite{R-G}, P48)
\begin{align*} \begin{split}
  &\log\eta(\gamma z)-\log\eta(z)\\
  &=\frac{1}{2}\cdot sgn(c)^2\cdot\log\left(\frac{cz+d}{i\cdot sgn(c)}\right)+\pi i\cdot\frac{a+d}{12c}-\pi i\cdot sgn(c)\cdot s(d,|c|)
\end{split} \end{align*}
where $\eta$ is the Dedekind $\eta$-function, $sgn(c)$ equals the sign of $c$ if $c\neq0$ and is defined to be zero if $c=0$. If we define a function $\Phi$ on $\SL_2(\BZ)$ as
\begin{align}
\Phi(\gamma):=
\begin{cases}
b/d &,\text{if}\ c=0\\
\frac{a+d}{c}-12\cdot sgn(c)\cdot s(d,|c|) &,\text{if}\ c\neq0
\end{cases}
\end{align}
for any $\gamma=\left(
                  \begin{array}{cc}
                    a & b \\
                    c & d \\
                  \end{array}
                \right)\in SL_2(\BZ)
$, then we can also write the above transformation formulas as
\begin{align} \begin{split}
  &\log\eta(\gamma z)-\log\eta(z)\\
  &=\frac{1}{2}\cdot sgn(c)^2\cdot\log\left(\frac{cz+d}{i\cdot sgn(c)}\right)+\frac{\pi i}{12}\cdot\Phi(\gamma)
\end{split} \end{align}
Finally, if $k$ is an \emph{odd positive} integer, then we have the following congruence equation (\cite{R-G}, P37)
\begin{align}
  12\cdot k\cdot s(h,k)\equiv k+1-2(\frac{h}{k})\pmod8
\end{align}
which is useful in studying the periods of some Eisenstein series in $\CE_2(\Gamma_0(N),\BZ)$ as we will see in later sections.

\begin{lem}\label{lem4}For any $1\neq M\mid D$, we have that
\[\int^{\gamma z}_zE_{M,D/M}(\tau)d\tau=\frac{1}{24}\sum_{1\leq r\mid D}(-1)^{\nu(r)-1}\frac{1}{(r,\frac{D}{M})}\Phi\left(
                                                                                           \begin{array}{cc}
                                                                                             a & rb \\
                                                                                             \frac{c}{r} & d \\
                                                                                           \end{array}
                                                                                         \right)
\]
with any $\gamma=\left(
                  \begin{array}{cc}
                    a & b \\
                    c & d \\
                  \end{array}
                \right)\in\Gamma_0(DC)
$ and $z\in\fH$.
\end{lem}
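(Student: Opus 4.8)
The plan is to reduce the period integral to the classical transformation law of the Dedekind $\eta$-function recalled above. Write $E_2(z)=1-24\sum_{n\ge 1}\sigma(n)\,\fq^n$, $\sigma(n)=\sum_{d\mid n}d$, for the quasi-modular weight-two Eisenstein series. By the Fourier expansion (3.1) of $E_1$, the holomorphic part of $E_1$ equals $-\tfrac1{24}E_2$, and since $M\neq 1$ the operator $[M]^{+}$ annihilates the non-holomorphic term of $E_1$ (Remark~\ref{zero}); hence $E_{M,D/M}=-\tfrac1{24}\,[D/M]^{-}\!\circ[M]^{+}(E_2)$. Unwinding the definitions of $[M]^{+}$ and $[D/M]^{-}$ as linear combinations of the dilations $g(z)\mapsto g(mz)$, and using that $D$ is square-free so that every $m\mid D$ factors uniquely as $m=(m,M)\cdot(m,D/M)$ with $(m,M)=m/(m,D/M)$, one obtains
\[
E_{M,D/M}(z)=-\frac1{24}\sum_{m\mid D}(-1)^{\nu(m)}\,\frac{m}{(m,D/M)}\,E_2(mz).
\]

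Next I would evaluate $\int_z^{\gamma z}E_2(m\tau)\,d\tau$ for each $m\mid D$. From $\eta(z)=\fq^{1/24}\prod_{n\ge 1}(1-\fq^n)$ one has $\tfrac{d}{dz}\log\eta(z)=\tfrac{\pi i}{12}E_2(z)$, i.e.\ $E_2(\tau)\,d\tau=\tfrac{12}{\pi i}\,d\log\eta(\tau)$. Substituting $w=m\tau$ and using $m\mid D\mid c$ (since $\gamma\in\Gamma_0(DC)$), one has $m\gamma z=\gamma'_m(mz)$ with $\gamma'_m:=\left(\begin{smallmatrix}a & mb\\ c/m & d\end{smallmatrix}\right)\in\SL_2(\BZ)$ (determinant $ad-bc=1$). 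For $c\neq 0$, feeding $\gamma'_m$ into the $\eta$-transformation formula involving $\Phi$ and noting that $(c/m)(mz)+d=cz+d$ and $\sgn(c/m)=\sgn(c)$, one gets
\[
\int_z^{\gamma z}E_2(m\tau)\,d\tau=\frac{1}{m}\left[\frac{6}{\pi i}\log\!\Bigl(\frac{cz+d}{i\,\sgn(c)}\Bigr)+\Phi(\gamma'_m)\right],
\]
where, crucially, the logarithmic term is independent of $m$.

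Assembling these contributions with the weights from the first display, the $m$ in the denominator cancels the $m$ in the numerator, and for $c\neq 0$
\[
\int_z^{\gamma z}E_{M,D/M}(\tau)\,d\tau=-\frac1{24}\sum_{m\mid D}\frac{(-1)^{\nu(m)}}{(m,D/M)}\left[\frac{6}{\pi i}\log\!\Bigl(\frac{cz+d}{i\,\sgn(c)}\Bigr)+\Phi(\gamma'_m)\right].
\]
The $z$-dependent logarithmic term drops out, because its coefficient is proportional to $\sum_{m\mid D}(-1)^{\nu(m)}/(m,D/M)=\bigl(\sum_{r\mid M}(-1)^{\nu(r)}\bigr)\bigl(\sum_{r'\mid D/M}(-1)^{\nu(r')}/r'\bigr)$, whose first factor equals $\prod_{p\mid M}(1-1)=0$ since $M\neq 1$. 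What remains is precisely $\tfrac1{24}\sum_{m\mid D}(-1)^{\nu(m)-1}(m,D/M)^{-1}\Phi\!\left(\begin{smallmatrix}a&mb\\ c/m&d\end{smallmatrix}\right)$, i.e.\ the assertion after renaming $m$ as $r$.

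It remains to dispose of $c=0$, where $\gamma$ acts as $z\mapsto z+b/d$ with $b/d\in\BZ$: then the left-hand side is $(b/d)\,a_0(E_{M,D/M};[\infty])$, while $\Phi\!\left(\begin{smallmatrix}a&mb\\ 0&d\end{smallmatrix}\right)=mb/d$ makes the right-hand side equal $-\tfrac{b/d}{24}\sum_{m\mid D}(-1)^{\nu(m)}\tfrac{m}{(m,D/M)}$, and these agree because the first display gives $a_0(E_{M,D/M};[\infty])=-\tfrac1{24}\sum_{m\mid D}(-1)^{\nu(m)}\tfrac{m}{(m,D/M)}$. The hardest part is ensuring the non-modular correction in the $\eta$-transformation law really does cancel in the weighted sum, which means being careful about the branch of $\log$ when applying that formula to each $\gamma'_m$; once that is settled, everything else is routine bookkeeping with the operators $[M]^{\pm}$ and their effect on Fourier expansions.
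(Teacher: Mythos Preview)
Your argument is correct. Both your proof and the paper's rest on the same idea---express $E_{M,D/M}$ through dilates of $E_2$ (equivalently, of $\log\eta$) and invoke the transformation law (6.3)---but the organization differs. The paper proceeds by a double induction: first on $\nu(D)$ to establish the case $M=D$, using $E_{p,1}=\tfrac{1}{2\pi i}\tfrac{d}{dz}\bigl(\log\eta(pz)-\log\eta(z)\bigr)$ as the base and peeling off one prime at a time via $[p]^{+}$; then on $\nu(D/M)$ via $[p]^{-}$. In that setup the non-modular $\log\bigl((cz+d)/i\,\sgn c\bigr)$ term cancels automatically at the base step and never appears again. You instead expand $[D/M]^{-}\!\circ[M]^{+}$ in one stroke to obtain the closed formula $E_{M,D/M}=-\tfrac{1}{24}\sum_{m\mid D}(-1)^{\nu(m)}\tfrac{m}{(m,D/M)}E_2(mz)$, compute each period via (6.3), and then kill the $\log$ term by the multiplicative identity $\sum_{m\mid D}(-1)^{\nu(m)}/(m,D/M)=\prod_{p\mid M}(1-1)\cdot\prod_{p\mid D/M}(1-1/p)=0$. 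Your route is slightly more direct and makes the role of the hypothesis $M\neq 1$ transparent (it is exactly what forces $\prod_{p\mid M}(1-1)=0$); the paper's induction hides this cancellation but avoids having to track the branch of the logarithm, since at each inductive step one only ever forms differences $\log\eta(\gamma' w)-\log\eta(w)$ with the same $w$. Your observation that $(c/m)(mz)+d=cz+d$ and $\sgn(c/m)=\sgn(c)$ is precisely what makes the branch issue harmless in your version.
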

\begin{proof}
We will firstly consider the Eisenstein series $E_{D,1}$. When $\nu(D)=1$, so that $D=p$ for some prime and $C=1$, then
\begin{align*}
  E_{p,1}(z)&=\frac{1}{2}\left[p\cdot\phi_{(0,0)}(pz)-\phi_{(0,0)}(z)\right]\\
  &=\frac{1}{2\pi i}\frac{d}{dz}\left(\log\eta(pz)-\log\eta(z)\right)
\end{align*}
because $(2\pi i)\cdot\phi_{(0,0)}=\frac{1}{z-\overline{z}}+2\frac{d}{dz}\log\eta$ by \cite{St}, Remark 2.4.3. It follows that
\begin{align*}
\int^{\gamma z}_zE_{p,1}(\tau)d\tau&=\frac{1}{2\pi i}\left[\frac{d}{dz}\left(\log\eta(p\gamma z)-\log\eta(\gamma z)\right)-\frac{d}{dz}\left(\log\eta(pz)-\log\eta(z)\right)\right]\\
&=\frac{1}{2\pi i}\left[\frac{d}{dz}\left(\log\eta(\gamma_pp\gamma\gamma^{-1}_p (pz))-\log\eta(pz)\right)-\frac{d}{dz}\left(\log\eta(\gamma z)-\log\eta(z)\right)\right]\\
&=\frac{1}{24}\left[\Phi(\gamma_p\gamma\gamma^{-1}_p)-\Phi(\gamma)\right]
\end{align*}
which is the desired in this special situation. However, if $\nu(D)>1$, then we choose an arbitrary prime divisor $p$ of $D$ and find inductively that
\begin{align*}
 \int^{\gamma z}_zE_{D,1}(\tau)d\tau&=\int^{\gamma z}_zE_{D/p,1}(\tau)d\tau-\int^{\gamma z}_z(E_{D/p,1}|\gamma_p)(\tau)d\tau\\
 &=\int^{\gamma z}_zE_{D/p,1}(\tau)d\tau-\int^{\gamma_p\gamma\gamma^{-1}_p(pz)}_{pz}E_{D/p,1}(\tau)d\tau\\
 &=\frac{1}{24}\sum_{1\leq r\mid D/p}(-1)^{\nu(r)-1}\Phi\left(
                                                                                           \begin{array}{cc}
                                                                                             a & rb \\
                                                                                             \frac{c}{r} & d \\
                                                                                           \end{array}
                                                                                         \right)-\frac{1}{24}\sum_{1\leq s\mid D/p}(-1)^{\nu(r)-1}\Phi\left(
                                                                                           \begin{array}{cc}
                                                                                             a & spb \\
                                                                                             \frac{c}{sp} & d \\
                                                                                           \end{array}
                                                                                         \right)\\
 &=\frac{1}{24}\sum_{1\leq r\mid D}(-1)^{\nu(r)-1}\Phi\left(
                                                                                           \begin{array}{cc}
                                                                                             a & rb \\
                                                                                             \frac{c}{r} & d \\
                                                                                           \end{array}
                                                                                         \right)
\end{align*}
for any $\gamma=\left(
                  \begin{array}{cc}
                    a & b \\
                    c & d \\
                  \end{array}
                \right)\in\Gamma_0(DC)
$. This completes the proof for the Eisenstein series $E_{D,1}$. The proof for more general $E_{M,D/M}$ is similar, in which one precede inductively on $\nu(\frac{D}{M})$ as following
\begin{align*}
&\int^{\gamma z}_zE_{M,D/M}(\tau)d\tau\\
&=\int^{\gamma z}_zE_{M,D/Mp}(\tau)d\tau-\frac{1}{p}\int^{\gamma z}_zE_{M,D/Mp}(p\tau)dp\tau\\
&=\frac{1}{24}\sum_{1\leq r\mid D/p}(-1)^{\nu(r)-1}\frac{1}{(r,\frac{D}{Mp})}\Phi\left(
                                                                                           \begin{array}{cc}
                                                                                             a & rb \\
                                                                                             \frac{c}{r} & d \\
                                                                                           \end{array}
                                                                                         \right)-\frac{1}{24}\sum_{1\leq r\mid D/p}(-1)^{\nu(r)-1}\frac{1}{p(r,\frac{D}{Mp})}\Phi\left(
                                                                                           \begin{array}{cc}
                                                                                             a & prb \\
                                                                                             \frac{c}{rp} & d \\
                                                                                           \end{array}
                                                                                         \right)\\
&=\frac{1}{24}\sum_{1\leq r\mid D}(-1)^{\nu(r)-1}\frac{1}{(r,\frac{D}{M})}\Phi\left(
                                                                                           \begin{array}{cc}
                                                                                             a & rb \\
                                                                                             \frac{c}{r} & d \\
                                                                                           \end{array}
                                                                                         \right)
\end{align*}
\end{proof}

In the following, we denote $\xi_{M,D/M}(\gamma)$ to be $\sum_{1\leq r\mid D}(-1)^{\nu(r)-1}\frac{1}{(r,\frac{D}{M})}\Phi\left(
                                                                                           \begin{array}{cc}
                                                                                             a & rb \\
                                                                                             \frac{c}{r} & d \\
                                                                                           \end{array}
                                                                                         \right)$ for any $\gamma=\left(
                  \begin{array}{cc}
                    a & b \\
                    c & d \\
                  \end{array}
                \right)\in\Gamma_0(DC)
$. Now we can finally prove the first part of Theorem~\ref{M2}

\begin{thm}\label{thm3}
Notations are as above, then $C(E_{M,L})$ is a finite cyclic abelian group. More over, the order $\CN_{M,L}$ is given by the following
\begin{align*}
 \CN_{M,L}:=\begin{cases}
\frac{p-1}{(12,p-1)} &,\text{if}\ DC=p\ for\ some\ prime\ p\\
\frac{\varphi(D)\cdot\mu(L)\cdot(\frac{D}{M},C)}{(24,\varphi(D)\cdot\mu(L)\cdot(\frac{D}{M},C))} &,\text{if}\ otherwise
\end{cases}
\end{align*}
\end{thm}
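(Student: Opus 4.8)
The plan is to identify $C(E_{M,L})$, up to order, with $\CP_{\Gamma_0(DC)}(E_{M,L})/\CR_{\Gamma_0(DC)}(E_{M,L})$, and to compute the $2$-adic valuation of this index from the explicit period integrals of Lemma~\ref{lem4}.

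First I would record that, since $E_{M,L}$ has rational Fourier coefficients (Lemma~\ref{Fourier}) and rational constant terms at every cusp (Proposition~\ref{constant3}), the groups $\CR_{\Gamma_0(DC)}(E_{M,L}) \subseteq \CP_{\Gamma_0(DC)}(E_{M,L})$ are nonzero cyclic sub-$\BZ$-modules of $\BQ$; by the perfect pairing $C_{\Gamma_0(DC)}(E_{M,L}) \times A_{\Gamma_0(DC)}(E_{M,L}) \to \BQ/\BZ$ recalled in \S2.3 this shows $C(E_{M,L})$ is finite cyclic, with
\[
|C(E_{M,L})| = \big[\CP_{\Gamma_0(DC)}(E_{M,L}):\CR_{\Gamma_0(DC)}(E_{M,L})\big].
\]
Corollary~\ref{R}, Proposition~\ref{P} and Theorem~\ref{order} (for $\psi=1$, where $g(1)=1$ and $n_1=-\tfrac1{24}$) already pin down $|A^{(s)}_{\Gamma_0(DC)}(E_{M,L})| = \varphi(D)\mu(L)(D/M,C)/(24,\,\varphi(D)\mu(L)(D/M,C))$, which is the ``otherwise'' value of $\CN_{M,L}$, and the exact sequence $0 \to \Sigma_{DC}\cap C(E_{M,L}) \to C(E_{M,L}) \to A^{(s)}_{\Gamma_0(DC)}(E_{M,L}) \to 0$ reduces the theorem to showing $\Sigma_{DC}\cap C(E_{M,L}) = 0$ whenever $DC \neq p$. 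Since $\Sigma_{DC}\cap C(E_{M,L})$ is cyclic, killed by $(M,L)$, and (when $(M,L)=1$) is $\BQ$-rational of multiplicative type hence $\subseteq \mu_2$, for $D$ odd this is a statement about the prime $2$ only in the case $(M,L)=1$, i.e. about $E_{M,D/M}$; it amounts to deciding whether $\CP_{\Gamma_0(DC)}(E_{M,D/M})$ equals $\CP_{\Gamma_1(DC)}(E_{M,D/M}) + \CR_{\Gamma_0(DC)}(E_{M,D/M})$ or is of index $2$ over it.

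To settle this I would use Lemma~\ref{lem4}. Because $H_1(Y_{\Gamma_0(DC)}(\BC),\BZ)$ is generated by the small loops around the cusps together with the classes of the paths $z \rightsquigarrow \gamma z$ for $\gamma \in \Gamma_0(DC)$, we get
\[
\CP_{\Gamma_0(DC)}(E_{M,D/M}) = \CR_{\Gamma_0(DC)}(E_{M,D/M}) + \tfrac1{24}\big\langle \xi_{M,D/M}(\gamma) : \gamma \in \Gamma_0(DC)\big\rangle_{\BZ}.
\]
The two tools for the $2$-adic analysis are: (i) Rademacher's $\Phi$ is $\BZ$-valued on $\SL_2(\BZ)$ (from the $\eta$-transformation formula, \cite{R-G}), so each $\xi_{M,D/M}(\gamma)$ lies in $\big(\prod_{r\mid D}(r,D/M)\big)^{-1}\BZ$ with only odd denominators, and the $2$-adic size of the period module is governed by the integers $\Phi\left(\begin{smallmatrix} a & rb \\ c/r & d\end{smallmatrix}\right)$; and (ii) the congruence $12 k\,s(h,k) \equiv k+1-2\left(\tfrac hk\right) \pmod 8$ for $k$ odd, which computes $\Phi$ modulo $8$ (and, iterating reciprocity, modulo higher powers of $2$ if needed) on matrices with odd lower-left entry. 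I would then evaluate $\xi_{M,D/M}$ on a convenient family of $\gamma \in \Gamma_0(DC)$ — e.g. with lower-left entry exactly $DC$, which is odd — and use these congruences to compute the $2$-adic valuation of a generator of $\tfrac1{24}\big\langle\xi_{M,D/M}(\gamma)\big\rangle_\BZ + \CR_{\Gamma_0(DC)}(E_{M,D/M})$; the expected value is $\min\big(0,\ v_2(\varphi(D)\mu(D/M))-3\big)$, i.e. exactly that of $\CP_{\Gamma_1(DC)}(E_{M,D/M})+\CR_{\Gamma_0(DC)}(E_{M,D/M})$, so that $\Sigma_{DC}\cap C(E_{M,D/M}) = 0$ — \emph{except} when $DC = p$, where the alternating sum degenerates to $\tfrac1{24}\big(\Phi(\gamma_p\gamma\gamma_p^{-1})-\Phi(\gamma)\big)$ and an extra factor of $2$ appears precisely when $8\mid p-1$, producing Ogg's $\frac{p-1}{(12,p-1)}$ in place of $\frac{p-1}{(24,p-1)}$. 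The case $DC = p$ can then be checked against the Ogg--Mazur computation of $|C_0(p)|$, so it requires nothing new, and for composite $DC$ with $(M,L)>1$ the same method applies after computing $\int_z^{\gamma z}E_{M,L}$ by the inductive procedure of Lemma~\ref{lem4}, applying $[p]^{\pm}$ and tracking how the period integral transforms.

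The step I expect to be the main obstacle is the $2$-adic bookkeeping in (ii)--(iii): one must control the alternating, $(r,D/M)^{-1}$-weighted sum of Dedekind-sum reciprocities over all $r\mid D$ uniformly enough to see both that the $2$-adic valuation of the period module is independent of the choice of $\gamma$ beyond the $\mu_2$-ambiguity, and that $DC = p$ is the \emph{unique} exception — equivalently, that for composite $DC$ the class of $\tfrac1{24}\xi_{M,D/M}$ already exhausts the Shimura contribution, while for $DC = p$ it can miss it by an index of $2$.
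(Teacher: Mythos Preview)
Your overall strategy matches the paper's: reduce to showing $\Sigma_{DC}\cap C(E_{M,L})=0$ when $DC$ is not prime, and for the remaining case $(M,L)=1$ compute $\CP_{\Gamma_0(DC)}(E_{M,D/M})\otimes\BZ_2$ via Lemma~\ref{lem4} and the Dedekind-sum congruence (6.4). Two points need sharpening.

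\textbf{The case $(M,L)>1$ is free.} You restricted the ``$\BQ$-rational of multiplicative type, hence $\subseteq\mu_2$'' argument to $(M,L)=1$, but it holds for \emph{every} $E_{M,L}$ here: these are the $\psi=1$ forms, so by Proposition~\ref{constant3} all constant terms are rational and $C(E_{M,L})$ is $\BQ$-rational regardless of $(M,L)$. Thus $\Sigma_{DC}\cap C(E_{M,L})\subseteq\mu_2$ always, and combined with ``killed by the odd integer $(M,L)$'' you get zero at once when $(M,L)>1$. No period computation is needed there; your proposed inductive extension of Lemma~\ref{lem4} to $E_{M,L}$ with $(M,L)>1$ is unnecessary work.

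\textbf{For $(M,L)=1$ you need an upper bound on $\CP_{\Gamma_0}$, not a lower one.} Evaluating $\xi_{M,D/M}$ on a ``convenient family'' (say $c=DC$) only exhibits elements of $\CP_{\Gamma_0(DC)}(E_{M,D/M})$, i.e.\ gives $\CP_{\Gamma_0}\supseteq(\text{something})$. But you already know $\CP_{\Gamma_0}\supseteq\CP_{\Gamma_1}+\CR_{\Gamma_0}$; what kills the possible $\mu_2$ is the \emph{reverse} inclusion $\CP_{\Gamma_0}\subseteq\CP_{\Gamma_1}+\CR_{\Gamma_0}$, and this requires showing $\frac{1}{24}\xi_{M,D/M}(\gamma)\in\BZ_2+\frac{\varphi(D)\mu(D/M)}{24}\BZ_2$ for \emph{every} $\gamma\in\Gamma_0(DC)$. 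The paper does this by a three-way split on the lower-left entry $c$: the case $c=0$ is elementary; for $c$ odd the congruence (6.4) applies directly to each $s(d,c/r)$; but for $c$ even (which does occur, since $c$ can be any multiple of the odd integer $DC$) one must first apply reciprocity (6.1) to pass to $s(c/r,d)$ with odd modulus $d$ before invoking (6.4). In both nontrivial cases the cancellation that makes the Jacobi-symbol term vanish $\pmod 8$ is that it carries a product of $\nu(D)\geq 2$ even factors---this is exactly where the hypothesis ``$D$ has at least two prime divisors'' enters, and why $DC=p$ is the genuine exception. Your acknowledged ``main obstacle'' is thus not merely bookkeeping: the even-$c$ branch is structurally different and is where reciprocity is essential, not optional.
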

\begin{proof}
We only need to prove the assertion about its order, as the acyclicity of $C(E_{m,L})$ follows immediately from the definition.

When $D=p$ is a prime and $C$ equals $1$ (or, respectively, $p$), the corresponding assertions about the order of $C_{\Gamma_0(p)}(E_{p,1})$ (respectively, $C_{\Gamma_0(p^2)}(E_{p,p})$) has been verified in \cite{Ogg2} (respectively, \cite{L}), we are thus reduced to consider those $D$ with at least two prime divisors. Since now $\CN_{M,L}$ is nothing but $n_{M,L}$, it follows from Corollary~\ref{order1} that we only need to verify the $2$-part.

Firstly, if $(M,L)\neq1$ and $p$ is a prime divisor of it, then $T_p(E_{M,L})=0$ by Theorem~\ref{Hecke} and so that $C(E_{M,L})$ is also annihilated by $T_p$. But \cite{LO} has proved that $T_p$ acts as multiplication by $p$ on the Shimura subgroup $\sum_{DC}$, and hence $\sum_{DC}\bigcap C(E_{M,L})\subseteq\mu_2$ must be annihilated by multiplication by $p$. Because $p\mid D$ is odd by our assumption, we find the intersection must be zero and hence prove the assertion when $(M,L)\neq1$.

It remains to prove the assertion for those $E_{M,D/M}$'s. We will distinguish into two situations in the following discussion.

(I) Firstly, we consider the Eisenstein series $E_{D,1}$. For any $\gamma=\left(
                  \begin{array}{cc}
                    a & b \\
                    c & d \\
                  \end{array}
                \right)\in\Gamma_0(DC)
$, we have that

(I.1) If $c=0$, then $\xi_{D,1}(\gamma)=\sum_{1\leq r\mid D}(-1)^{\nu(r)-1}\frac{br}{d}=\pm b\cdot(-1)^{\nu(D)-1}\cdot\varphi(D)$, so $\int^{\gamma z}_zE_{D,1}(\tau)d\tau=\frac{\pm b}{24}\varphi(D)\in\CR(E_{D,1})$.

(I.2) If $c$ is odd, then we find by definition (note that we may assume $c>0$)
\begin{align*}
  \xi_{D,1}(\gamma)&=\sum_{1\leq r\mid D}(-1)^{\nu(r)-1}\left(\frac{a+d}{(c/r)}-12\cdot s(d,\frac{c}{r})\right)\\
  &\equiv(-1)^{\nu(D)-1}\cdot\frac{a+d-1}{c}\cdot\varphi(D)-\frac{2}{c}(\frac{d}{c})\prod_{p\mid D}(1-(\frac{d}{p})p)\pmod8\\
  &\equiv(-1)^{\nu(D)-1}\cdot\frac{a+d-1}{c}\cdot\varphi(D)\pmod8
\end{align*}
with the last equality holds because $D$ is odd and $\nu(D)>1$. We have thus prove that $\int^{\gamma z}_zE_{D,1}(\tau)d\tau\in\BZ_2+\frac{\varphi(D)}{24}\BZ_2$ for any such $\gamma$.

(I.3) If $c\neq0$ is even, then $d$ is odd and we may assume $d>0$, so that
\begin{align*}
  \xi_{D,1}(\gamma)&=\sum_{1\leq r\mid D}(-1)^{\nu(r)-1}\left(\frac{a+d}{(c/r)}-12\cdot sgn(c)\cdot s(d,|\frac{c}{r}|)\right)
\end{align*}
By the reciprocity law, we have
\[s(d,|\frac{c}{r}|)+s(|\frac{c}{r},d|)=-\frac{1}{4}+\frac{1}{12}\left(\frac{d}{|c|}r+\frac{r}{c|d|}+\frac{|c|}{dr}\right)\]
It follow that
\begin{align*}
  \xi_{D,1}(\gamma)&\equiv\left(\sum_{1\leq r\mid D}(-1)^{\nu(r)-1}12\cdot sgn(c)\cdot s(|\frac{c}{r}|,d)\right)-\left(\sum_{1\leq r\mid D}(-1)^{\nu(r)-1}\cdot\frac{c}{dr}\right)\pmod8\\
  &\equiv\frac{2}{d}(\frac{|c|}{d})\cdot\sgn(c)\cdot\prod_{p\mid D}(1-\frac{p}{d})+\frac{c}{dD}\cdot\varphi(D)\pmod8\\
  &\equiv\frac{c}{dD}\cdot\varphi(D)\pmod8
\end{align*}
with the last equality holds because $\nu(D)>1$. We have thus prove that $\int^{\gamma z}_zE_{D,1}(\tau)d\tau\in\BZ_2+\frac{\varphi(D)}{24}\BZ_2$ for any such $\gamma$.

It follows that $\int^{\gamma z}_zE_{D,1}(\tau)d\tau\in\BZ_2+\frac{\varphi(D)}{24}\BZ_2$ for any such $\gamma\in\Gamma_0(DC)$. But as \[\CP(E_{D,1})\supseteq\CP_{\Gamma_1(DC)}(E_{D,1})=\BZ+\frac{\varphi(D)}{24}\BZ\]
we find that
\[\CP(E_{D,1})\otimes\BZ_2=\BZ_2+\frac{\varphi(D)}{24}\BZ_2\]
and hence complete the proof for $E_{D,1}$

(II) Now we consider those $E_{M,D/M}$ with $\frac{D}{M}\neq1$. The proof is similar as above. For any $\gamma=\left(
                  \begin{array}{cc}
                    a & b \\
                    c & d \\
                  \end{array}
                \right)\in\Gamma_0(DC)
$, we have that

(II.1) If $c=0$, then
\begin{align*}
\xi_{M,D/M}(\gamma)&=\pm\sum_{1\leq s\mid\frac{D}{M}}\frac{(-1)^{\nu(s)}}{s}\sum_{1\leq t\mid M}(-1)^{\nu(t)-1}tsb\\
&=(\pm b)\sum_{1\leq s\mid\frac{D}{M}}(-1)^{\nu(s)}\sum_{1\leq t\mid M}(-1)^{\nu(t)-1}t=0
\end{align*}

(II.2) If $c$ is odd, then we find by definition (note that we may assume $c>0$)
\begin{align*}
  \xi_{M,D/M}(\gamma)&=\sum_{1\leq s\mid\frac{D}{M}}\frac{(-1)^{\nu(s)}}{s}\sum_{1\leq t\mid M}(-1)^{\nu(t)-1}\left(\frac{a+d}{(c/ts)}-12\cdot s(d,\frac{c}{ts})\right)\\
  &\equiv-\sum_{1\leq s\mid\frac{D}{M}}\frac{(-1)^{\nu(s)}}{s}\sum_{1\leq t\mid M}(-1)^{\nu(t)-1}\frac{ts}{c}\left(\frac{c}{ts}+1-2(\frac{d}{c})(\frac{d}{ts})\right)\pmod8\\
  &\equiv-\frac{2}{c}(\frac{d}{c})\prod_{p\mid\frac{D}{M}}(1-(\frac{d}{p}))\prod_{p\mid M}(1-p(\frac{d}{p}))\equiv0\pmod8
\end{align*}
with the last equality holds because $D$ is odd and $\nu(D)>1$.

(II.3) If $c\neq0$ is even, then $d$ is odd and we may assume $d>0$. Similarly as before, a straight forward calculation by using the reciprocity law show that
\begin{align*}
  \xi_{M,D/M}(\gamma)&\equiv\pm\frac{2}{d}(\frac{|c|}{d})\prod_{p\mid\frac{D}{M}}(1-\frac{1}{p}(\frac{p}{d}))\prod_{p\mid M}(1-(\frac{p}{d}))\equiv0\pmod8
\end{align*}
with the last equality holds because $\nu(D)>1$. We have thus prove that $\int^{\gamma z}_zE_{M,D/M}(\tau)d\tau\in\BZ_2$ for any $\gamma\in\Gamma_0(DC)$ and hence completes the proof of the theorem.
\end{proof}

\end{document}